\renewcommand\theequation{\thesection.\arabic{equation}}
\newcommand{\BC}{{\mathbb {C}}}
\newcommand{\BQ}{{\mathbb {Q}}}
\newcommand{\BR}{{\mathbb {R}}}
\newcommand{\BZ}{{\mathbb {Z}}}
\newcommand{\CN}{{\mathcal {N}}}
\newcommand{\CW}{{\mathcal {W}}}
\newcommand{\GL}{{\mathrm{GL}}}
\newcommand{\GSp}{{\mathrm{GSp}}}
\newcommand{\Hom}{{\mathrm{Hom}}}
\newcommand{\SL}{{\mathrm{SL}}}
\newcommand{\bs}{\backslash}
\def\diag{{\rm diag}}
\newtheorem{thm}{Theorem}[section]
\newtheorem{cor}[thm]{Corollary}
\newtheorem{lem}[thm]{Lemma}
\newtheorem{prop}[thm]{Proposition}
\newtheorem {conj}[thm]{Conjecture}
\newtheorem {ques/conj}[thm]{Question/Conjecture}
\newtheorem{defn}[thm]{Definition}
\newtheorem{rmk}[thm]{Remark}
\newtheorem{exmp}[thm]{Example}
\newcommand{\Rmnum}[1]{\expandafter\@slowromancap\romannumeral #1@}
\begin{document}
\renewcommand{\theequation}{\arabic{equation}}
\numberwithin{equation}{section}

\title{Bessel Functions and Kloosterman Integrals on $\GL(n)$}


\address{Max-Planck-Institut {f\"{u}r} Mathematik, Vivatsgasse 7, Bonn, 53111, Germany}
\email{miao@mpim-bonn.mpg.de}

\author{Xinchen Miao}
\address{School of Mathematics\\
University of Minnesota\\
Minneapolis, MN 55455, USA}
\email{miao0011@umn.edu}



\subjclass[2010]{Primary 11F70, 22E50; Secondary 11F85}
\keywords{Bessel functions; Kloosterman sums; local orbital integrals; relative Shalika germs}

\begin{abstract}

This paper will focus on the proof of local integrability of Bessel functions for $\GL(n)$ ($p$-adic case) by using the relations between Bessel functions and local Kloosterman (orbital) integrals proved in several papers of E. M. Baruch \cite{Ba03} \cite{Ba04} \cite{Ba05}, the theory of the (relative) Shalika germs established by H. Jacquet and Y. Ye in \cite{JY96} \cite{JY99} and G. Stevens' approach \cite{Ste87} on estimating certain $\GL(n)$ generalized Kloosterman sums.

\end{abstract}

\maketitle

\tableofcontents

\section{Introduction, Background and History}

The study of (classical) Bessel functions can be traced back to the 19th century. Bessel functions are first defined by D. Bernoulli and then generalized by F. Bessel. The classical Bessel functions are canonical solutions $y=y(x)$ of Bessel's differential equation
$$ x^2 \frac{d^2 y}{dx^2}+ x \frac{dy}{dx}+ (x^2- \alpha^2)y=0$$
for an arbitrary complex number $\alpha$, which is defined as the order of the Bessel function \cite{Wat95}.

Among all complex numbers, the most important cases are when $\alpha$ is an integer or half-integer. Bessel functions for integers $\alpha$ are also known as cylinder functions because they show up in the solution to Laplace's equation in cylindrical coordinates. Bessel functions for half-integers $\alpha$ appear in the solution to Helmholtz equation in spherial coordinates \cite{Wat95}. This shows the natural relations between Bessel functions and the solutions to PDEs.

In the 20th century, more connections and applications of the Bessel functions and their generalizations were found in many other fields of mathematics, in particular, analytic number theory, automorphic forms and the Langlands Program.

For example, classical Bessel functions appears natrually in the Voronoi summation formula as well as the Petersson and Kuznetsov trace formula for $\GL_2(\BR)$. These formulas have become fundamental analytic tools in attacking some deep problems in analytic number theory, most notably the subconvexity problem for automorphic $L$-functions. A version of the Voronoi's summation formula, which is not in its most general form, roughly reads as follows (See \cite[Section 1]{Qi20})
$$ \sum_{n=1}^{\infty}  \sqrt{n} \lambda_F^{+}(n) e \left( \frac{an}{c} \right) v(n)= \frac{1}{c} \cdot \sum_{\pm} \sum_{n=1}^{\infty}  \sqrt{n} \lambda_F^{\pm}(n) e \left( \mp \frac{\bar{a}n}{c} \right) \Gamma \left( \pm \frac{n}{c^2} \right).$$
In the above formula, $a,\bar{a}$ and $c$ are integers such that $(a,c)=1$ and $a \bar{a} \equiv 1 (\mod c)$, $\lambda_F^{\pm}(n)$ are certain normalized Fourier coefficients of a holomorphic or Maass cusp form $F$ for $\SL_2(\BZ)$, $v$ is a smooth weight function compactly supported on $(0,\infty)$ and $\Gamma$ is the Hankel transform of $v$,
$$ \Gamma(x)= \int_{0}^{\infty} v(y) J_F(xy) dy,\; x \neq 0.$$
Here the intgral kernel $J_F$ of Hankel transform $\Gamma$ has an expression in terms of classical Bessel functions. Moreover, under the representation theoretical viewpoint, the Hankel transform associated to classical Bessel functions is closely related to the local functional equation of the $\GL_2 \times \GL_1$ Rankin-Selberg $L$-function in the real place (See \cite[Section 17,18]{Qi20}). Hence, the Bessel functions here should have a close relation to the $\GL_2(\BR)$ Whittaker functions. Good references for $\GL(2)$ Voronoi Summation Formula and Bessel function are \cite{Cog14} and \cite{IT13}. With the representation theory closely involved, a natural question rises up. That is, how to generalize the definitions of Bessel functions to higher rank case, for example, the case of $\GL_n(\BR)$ ($n \geq 3$)? This question is partially answered in \cite{Qi20}.

On the other hand, after the milestone work of John Tate (Tate's thesis) to reformulate the functional equation of Hecke $L$-function in 1950, the local-global principle (Euler products) and the adelic languages became more and more important in Number Theory. Moreover, global Fourier coefficients of automorphic forms are factorizable because of uniquenss of global Whittaker models. As an analogy to the real case, it is also natural to believe that the Bessel functions for $\GL_n(\BQ_p)$ should have a close relation to the $\GL_n(\BQ_p)$ local Whittaker functions. Therefore, it is also necessary to deveolop a similar theory of Bessel functions over non-archimedean local fields, in other words, $p$-adic local fields (For example, $\BQ_p$). Actually, we will see in the next section that using the definition of Whittaker functions (model), we can give the rigorous definition of Bessel functions in the non-archimedean local field case.

In this paper, we will focus on the Bessel functions in the case of $p$-adic local fields. More specifically, we want to understand some important properties (local integrability) of Bessel functions for $\GL_n(\BQ_p)$ ($n \geq 2$) by using the theory of (relative) Shalika germs developed by H. Jacquet and Y. Ye \cite{JY96} \cite{JY99} in 1990s relevant to their study of base change relative trace formulae.

The local integrability property is important to the understanding of general Bessel functions. The local integrability of $\GL(2)$ Bessel functions is well-known by the analytic properties of $\GL(2)$ Whittaker functions (see \cite{So84} and \cite{Ba97}). In \cite{Ba04}, M. Baruch proved that the Bessel functions for $\GL(3)$ over $p$-adic fields are locally integrable. However, the local integrability for Bessel functions for $\GL(n)$ ($n \geq 4$) is still an open problem in the subject. In this paper, we will keep our eyes on this problem and discuss some progress on it by using the methods of (relative) Shalika germs defined in \cite{JY96} and \cite{JY99}. More explicitly, the main result of this paper is that the Bessel functions for $\GL(n)$ over the $p$-adic field $\BQ_p$ are locally integrable (See Theorem \ref{inte}).

The outlines of our paper are as follows: In Section 2, following the paper \cite{Ba05}, we will give the definition of Bessel functions in the case of $\GL_n(\BQ_p)$. In Section 3, applying the results in \cite{Ba05}, \cite{JY96} and \cite{JY99}, we can reduce the proof of local integrability of Bessel functions (Theorem \ref{inte}) to finding a non-trivial upper bound for the generalized Kloosterman sums in the case of $\GL_n(\BQ_p)$. The reduction steps are given in Section 3. In Section 4, we will give a brief review of G. Stevens' result \cite{Ste87} which gives an effective method to yield an upper bound for the generalized Kloosterman sums. The definition of generalized Kloosterman sums will also be given in Section 4. In Section 5, we follow the method of Stevens to estimate Kloosterman sums for $\GL_n(\BQ_p)$ and prove a non-triival bound (See Theorem \ref{thm: w_n}). Finally, we complete the proof of the local integrability of $\GL(n)$ Bessel functions (Theorem \ref{inte}) in Section 6. In Section 7, we will give some applications, propositions and corollaries of Theorem \ref{inte}. In the Appendix, we will give a better bound for the Kloosterman sums estimation on $\GL_4(\BQ_p)$ after a more careful and delicate computation (See Theorem \ref{thm: w_8}).

Recently, from email communication, we knew that V. Blomer and S. H. Man proved a similar result (See \cite[Corollary 1]{BM22}) as our Theorem \ref{thm: w_n}, which also gives a non-trivial bound for the generalized $\GL_n(\BQ_p)$ Kloosterman sums attached to the longest Weyl element $w_{G_n}$. Our methods are related, but different. Moreover, we also have different applications. We mainly focus on the local integrability problem of Bessel functions on $\GL_n(\BQ_p)$, while \cite{BM22} has a nice application on the Sarnak density conjecture \cite{Sar90}.

\section{Bessel functions for $\GL(n)$}

In Section 2, we will give the definition of Bessel functions for $\GL_n(\BQ_p)$ by using the uniqueness of Whittaker functional in the $p$-adic case.

Let $G_n=\GL_n(F)$, where $F$ is a $p$-adic local field, i.e. a finite extension of $\BQ_p$. Throughout this paper, we consider $F=\BQ_p$. Hence, $G_n=\GL_n(F)=\GL_n(\BQ_p)$. Let $B$ be the Borel subgroup of upper triangular matrices, $T$ the subgroup of diagonal matrices and $N$ the subgroup of upper unipotent matrices. Let $K:=\GL_n(\BZ_p)$ be the maximal compact open subgroup of $G_n$. Let $\psi$ be a non-degenerate character of $N$, which is of the form
$$ \psi(n):= \xi \left( \sum_{i=1}^{n-1}  n_{i,i+1} \right)$$
for $n=(n_{i,j}) \in N$, and $\xi$ is the standard nontrivial additive character of $\BQ_p$ as in \cite[Section 1]{Ste87}.
For $G_n$, we define the normalizer of $T$ in $G_n$ to be
$$ N_{G_n}(T):= \{ g \in G_n:\; ghg^{-1} \in T \; for \; all \; h \in T \}.$$
The Weyl group $W_{G_n}$ is defined as $W_{G_n}:= N_{G_n}(T)/T$. The Weyl group is a finite group and is isomorphic to symmetric group on $n$ letters $S_n$.
Let
$w_{G_n}$ be the longest Weyl element in the Weyl group $W_{G_n}$, which can be written as
$$ w_{G_1} =1, \;\; w_{G_n}= \begin{pmatrix} 0 & 1 \\ w_{G_{n-1}} & 0 \end{pmatrix},$$
i.e. the $n \times n$ permutation matrix whose anti-diagonal entries are $1$. 

Throughout this paper, we fix the (normalized) Haar measure with the volumes of $K$ and $N \cap K$ both equal to one.

We recall the definition of Whittaker functionals over a $p$-adic field. Let $(\pi,V)$ be a smooth irreducible representation of $G_n$. A Whittaker functional $L$ is a linear functional on $V$ such that $L( \pi(n)v)= \psi(n)L(v)$ for all  $n \in N$ and $v \in V$. The following well-known theorem of Whittaker functionals is proved by J. Shalika \cite{Sha74}.
\begin{thm} [\it{Uniqueness of Whittaker functionals}] \label{uniWhi}
Let $(\pi,V)$ be an irreducible smooth representation of $G_n$. Then the space of Whittaker functionals $L$ has dimension at most equal to one.
\end{thm}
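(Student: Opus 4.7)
The plan is to implement the classical Gelfand--Kazhdan method, which reduces uniqueness of the Whittaker functional to a structural statement about $(N,\psi)$-bi-equivariant distributions on $G_n$.

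First, I would reformulate the statement in terms of distributions. Given two Whittaker functionals $L_1,L_2\in\Hom_N(V,\BC_\psi)$, one uses matrix coefficients of $\pi$ and its contragredient $\tilde{\pi}$ to associate to the pair $(L_1,L_2)$ a distribution $T=T_{L_1,L_2}$ on $G_n$. A direct calculation shows that $T$ satisfies the two-sided equivariance $T(\lambda(n_1)\rho(n_2)f)=\psi(n_1)\psi(n_2)^{-1}T(f)$ for all $n_1,n_2\in N$ and $f\in C_c^\infty(G_n)$, where $\lambda,\rho$ denote the left and right regular actions. Showing that any two Whittaker functionals are proportional then reduces, via irreducibility of $\pi$, to showing that the space of such two-sided equivariant distributions on $G_n$ has dimension at most one.

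The central step is the Gelfand--Kazhdan involution theorem: every distribution on $G_n$ satisfying the above bi-equivariance is invariant under the anti-involution $\theta(g)=w_{G_n}\,{}^tg\,w_{G_n}^{-1}$, which preserves $N$ and the character $\psi$. I would prove this by Bruhat stratification. Decompose $G_n=\bigsqcup_{w\in W_{G_n}}BwB$; the essential combinatorial lemma is that for every $w\neq w_{G_n}$ the non-degeneracy of $\psi$ supplies a simple root subgroup $U_\alpha\subset N$ on which $\psi$ is non-trivial and such that $w^{-1}U_\alpha w\subset N$, so that the character $u\mapsto\psi(u)\psi(w^{-1}uw)^{-1}$ is non-trivial on $U_\alpha$; averaging any bi-equivariant distribution against a bump function supported on $U_\alpha$ then forces its vanishing on $BwB$. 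Consequently, every such distribution is supported on the closure of the big cell $Bw_{G_n}B$, and a direct computation on the open cell (a single $(N,T,N)$-orbit) shows that the space of equivariant distributions there is one-dimensional and, by symmetry of the big cell under $\theta$, visibly $\theta$-invariant.

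Given the Gelfand--Kazhdan involution theorem, the deduction of uniqueness is formal: the $\theta$-invariance of $T_{L_1,L_2}$, combined with the Gelfand--Kazhdan isomorphism $\tilde{\pi}\cong\pi\circ\theta$ which transfers Whittaker functionals on $\pi$ to those on $\tilde{\pi}$, collapses the pair $(L_1,L_2)$ to a single scalar multiple, yielding $\dim\Hom_N(V,\BC_\psi)\leq 1$. The main obstacle is the Bruhat-cell vanishing of the previous paragraph: constructing the detecting root subgroup $U_\alpha$ for each non-longest $w$ requires a careful case analysis exploiting non-triviality of $\psi$ on every simple root subgroup, and the distributional (rather than pointwise) nature of the equivariance requires establishing the vanishing by convolution against bump functions supported on shrinking neighborhoods of each cell. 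Once this technical core is in hand, the remaining steps are standard manipulations with matrix coefficients and Gelfand--Kazhdan duality.
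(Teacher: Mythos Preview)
The paper does not prove this theorem at all: it is stated as a known result and attributed to Shalika \cite{Sha74}, with no argument given. Your outline via the Gelfand--Kazhdan involution method is indeed the standard route (and is in essence Shalika's own approach), so in that sense you are supplying what the paper deliberately omits.

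Two small points on your sketch. First, the phrase ``supported on the closure of the big cell $Bw_{G_n}B$'' is vacuous, since that cell is open and dense; what you actually need is that no nonzero bi-equivariant distribution is supported on the complement of the big cell (the union of the smaller Bruhat strata), so that the restriction map to the open cell is injective. Second, your combinatorial lemma is correct as stated---for $w\neq w_{G_n}$ there is a simple $\alpha$ with $w^{-1}\alpha>0$---but to conclude that the character $u\mapsto\psi(u)\psi^{-1}(t^{-1}w^{-1}uwt)$ is nontrivial you should note that $w^{-1}\alpha$, while positive, need not be simple, so $\psi$ is trivial on $U_{w^{-1}\alpha}$; the case where $w^{-1}\alpha$ happens to be simple requires a brief extra remark (varying $t$, or choosing a different $\alpha$). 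These are routine refinements; the architecture of your argument is sound.
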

In other words, the Whittaker functional $L$ is unique up to scalar. If this Whittaker functional $L$ is non-zero, the representation $\pi$ is called generic. For a non-zero Whittaker functional $L$, we define that $W_v(g):= L(\pi(g)v),\;v \in V,\;g \in G_n$ and let $G_n$ act on the space of these functions by right translations. That is, if $g_1 \in G_n$ and $W$ is a function on $G_n$ then we define $(\rho(g_1)W)(g)=W(gg_1)$ for $g \in G_n$. The map $v \rightarrow W_v$ gives a realization of $\pi$ on a space of Whittaker functions which satisfy $W(ng)=\psi(n)W(g)$ for all $n \in N$ and $g \in G_n$. We denote this space by $\CW(\pi,\psi)$ and call it the Whittaker model of $\pi$.


Let $N_1 \subseteq N_2 \subseteq N_3 \subseteq \cdots$ be a filtration of $N$ with compact open subgroups $N_i,i=1,2,\cdots$ such that $N= \cup_{i=1}^{\infty} N_i$. We denote this filtration by $\CN$. Let $f:N \rightarrow \BC$ be a locally constant function.

\begin{defn} \label{sta1}
We define the stable integral
$$ \int_N^{\CN} f(n)dn:= \lim_{m \rightarrow +\infty} \int_{N_m} f(n)dn $$
if this limit exists. If the limit exists, we say that the stable integral is convergent.
\end{defn}

The following theorem is proved in \cite{Ba05}.

\begin{thm} \cite{Ba05}  \label{sta2}
Let $\CN:= \{N_i,i \geq 1 \}$ be a filtration of $N$ for $G_n$ as above. Let $g \in Bw_{G_n} B$ and $W \in \CW(\pi,\psi)$. Then the (stable) integral
$$ \int_N^{\CN} W(gn) \psi^{-1}(n)dn$$
is convergent. Moreover, the value for this integral is independent on the choice of filtration $\CN$.
\end{thm}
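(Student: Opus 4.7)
The plan is to adapt the strategy used by Baruch in \cite{Ba05}: first reduce to the model case $g = w_{G_n}$ using the Bruhat decomposition, then establish stability of the reduced integral via the smoothness of $W$ combined with the key structural fact that $W$ restricted to the torus has essentially compact support in the direction dictated by the longest Weyl element.

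I would begin by writing $g = b_1 w_{G_n} b_2$ with $b_1, b_2 \in B$. Since $B = TN$ normalizes $N$, the map $n \mapsto b_2 n b_2^{-1}$ is a group automorphism of $N$ with constant Jacobian, and $\{b_2 N_m b_2^{-1}\}$ is again an exhaustive filtration by compact open subgroups. Setting $W' := \rho(b_2) W \in \CW(\pi, \psi)$ absorbs the right factor $b_2$; the left factor $b_1 = t_1 u_1$ can be rewritten as $u_1' t_1$ with $u_1' := t_1 u_1 t_1^{-1} \in N$, so that the Whittaker property $W'(u_1' h) = \psi(u_1') W'(h)$ strips off $u_1'$, while $t_1$ is pushed through $w_{G_n}$ as $t_1' := w_{G_n}^{-1} t_1 w_{G_n} \in T$ and absorbed into yet another right-translate. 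The outcome is an identity of the form
\[
\int_{N_m} W(gn) \psi^{-1}(n) \, dn \;=\; c \int_{N_m'} W''(w_{G_n} n') \, \psi'^{-1}(n') \, dn'
\]
for some $W'' \in \CW(\pi, \psi)$, a new non-degenerate character $\psi'$ of $N$, a rescaled filtration $\{N_m'\}$, and a nonzero constant $c$. This reduces both the convergence and the filtration-independence questions to the model case $g = w_{G_n}$, uniformly over all such modified data.

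For the reduced integral, smoothness of $W''$ under some compact open subgroup $K_0 \subset G_n$ yields right-$N_0$-invariance with $N_0 := N \cap K_0$, so that $f(n') := W''(w_{G_n} n') \psi'^{-1}(n')$ satisfies $f(n' n_0) = \psi'^{-1}(n_0) f(n')$ for all $n_0 \in N_0$. For $m$ large enough that $N_0 \subset N_m'$, partitioning $N_m'$ into $N_0$-cosets gives
\[
\int_{N_m'} f(n') \, dn' \;=\; \sum_j f(n'_j) \int_{N_0} \psi'^{-1}(n_0) \, dn_0 .
\]
If $\psi'|_{N_0} \not\equiv 1$, each coset integral is zero and the stable integral vanishes trivially. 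In the opposite case $\psi'|_{N_0} \equiv 1$, the function $f$ descends to a locally constant function on the discrete set $N/N_0$, and stability reduces to showing that $f$ has finite support there.

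The main obstacle, and the genuine content of the theorem, is this finite-support statement. To handle it, I would pass to the Iwasawa (or Gauss) decomposition $w_{G_n} n = u(n) \, t(n) \, k(n)$ for $n$ in the appropriate open dense subset of $N$; the Whittaker property then gives $W''(w_{G_n} n) = \psi(u(n)) \, W''(t(n) k(n))$, and admissibility of $\pi$ together with right-$K_0$-invariance of $W''$ means that $k(n)$ contributes only finitely many values modulo $K_0$. The remaining control is governed by the restriction of $W''$ to $T$; here the classical gauge/asymptotic estimates of Jacquet-Piatetski-Shapiro-Shalika force $W''|_T$ to be compactly supported in precisely the direction into which $n \to \infty$ is pushed by conjugation through $w_{G_n}$. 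This forces the set of nonzero cosets in $N/N_0$ to be finite, securing both convergence of the stable integral and its independence from the exhausting filtration $\CN$, since any two sufficiently deep filtrations eventually contain the same finite support region.
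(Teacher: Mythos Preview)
Your reduction to the model case $g=w_{G_n}$ is fine, and the coset argument up to the dichotomy on $\psi'|_{N_0}$ is correct. The gap is in the final and decisive step: the assertion that the Jacquet--Piatetski-Shapiro--Shalika support estimate forces $f(n)=W''(w_{G_n}n)\psi'^{-1}(n)$ to have finite support on $N/N_0$ is false in general. The JPSS condition says $W''(t)\neq 0$ only if $|\alpha_i(t)|$ is bounded \emph{above} for every simple root $\alpha_i$. But in the Iwasawa decomposition $w_{G_n}n=u(n)\,t(n)\,k(n)$, as $n\to\infty$ in $N$ the torus part moves in the \emph{opposite} direction: already for $\GL_2$, with $n=n(y)$ and $|y|$ large, one finds $t(n)=\mathrm{diag}(-y^{-1},y)$ up to units, so $|\alpha(t(n))|=|y|^{-2}\to 0$. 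That is exactly the region where the support condition is vacuous, and for non-supercuspidal $\pi$ (e.g.\ unramified principal series, where the spherical Whittaker function on $\mathrm{diag}(p^k,1)$ is a nonvanishing Schur polynomial for all $k\ge 0$) the integrand is genuinely \emph{not} compactly supported. Stabilization therefore cannot be a support phenomenon; it comes from cancellation.

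Baruch's proof in \cite{Ba05} is accordingly more structural. Rather than a single Iwasawa step, he proceeds by induction on $n$ through a mirabolic-type decomposition of $N$ (roughly $N=N_{n-1}\cdot V$ with $V$ an abelian column piece). The partial integral over $V$ does stabilize by a support argument---this is where the JPSS estimate is used, but applied to the restriction of $W$ to $\GL_{n-1}$, where the direction is correct---and yields a Whittaker function for $\GL_{n-1}$; the remaining integral over $N_{n-1}$ is then handled by the inductive hypothesis. So the JPSS input is essential but enters only for carefully chosen partial integrals, not globally as your argument requires.
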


Let $g \in Bw_{G_n} B$ and we define the linear functional $L_g: V \rightarrow \BC$ by
$$ L_g(v):= \int_N^{\CN} W_v(gn) \psi^{-1}(n)dn.$$
Since $L_g(\pi(n)v)= \psi(n)L_g(v)$ for all $n \in N$, we see that $L_g$ is also a Whittaker functional, hence it follows from Theorem \ref{uniWhi} that there exists a scalar $j_{\pi,\psi}(g)$ such that
$$ L_g(v)= j_{\pi,\psi}(g)L(v)$$
for all $v \in V$. We call $j_{\pi}=j_{\pi,\psi}$ the Bessel function of $\pi$. The Bessel function $j_{\pi}(g)$ is independent on the choice of the Whittaker functional $L$ which is unique up to scalar multiplication (See Theorem \ref{uniWhi}). We see that the Bessel function $j_{\pi}$ is defined on the open Bruhat cell $Bw_{G_n} B$ of $G_n$. Moreover we know that $j_{\pi}(g)$ satisfies
$$ j_{\pi}(n_1gn_2)= \psi(n_1 n_2)j_{\pi}(g)$$
for all $n_1,n_2 \in N$ and $g \in B w_{G_n} B$. The value of $j_{\pi}$ is determined by its values on the set $w_{G_n} T$ and the Bessel function $j_{\pi}$ is locally constant on the set $B w_{G_n} B$ (See \cite{Ba05}). If $g \in G_n - B w_{G_n} B$, it is defined that $j_{\pi}(g) \equiv 0$.

As in the theory of distribution characters of smooth irreducible admissible representations of $G_n$ (See \cite{HC70} and \cite{HC99}), the Bessel function $j_{\pi}$ is expected to be locally integrable on $G_n$. We prove this result for all $n \geq 2$, which states as follows (the main result of our paper):

\begin{thm} \label{inte}
The Bessel function $j_{\pi}(g)$ is locally integrable on $G_n = \GL_n(\BQ_p)$.
\end{thm}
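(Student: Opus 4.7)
The plan is to reduce local integrability on all of $G_n$ to a quantitative bound on generalized $\GL_n$ Kloosterman sums, exactly as outlined in the introduction. Since $j_\pi$ is locally constant on the open Bruhat cell $Bw_{G_n}B$ and vanishes off it, and since it transforms on the left and right by the non-degenerate character $\psi$ of $N$, one reduces via the Bruhat factorization to showing integrability of $|j_\pi(w_{G_n} t)|$ with respect to the modular character $\delta_B(t)^{-1}\,dt$ on a compact neighborhood of $T$. The only place where integrability can fail is near the walls of the Weyl chambers, i.e.\ where adjacent diagonal entries of $t$ become comparable in valuation; away from that singular locus $j_\pi$ is a bounded locally constant function and poses no difficulty.

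Next, following Baruch, Theorem \ref{sta2} lets us rewrite $j_\pi(w_{G_n} t)$ as a stable Kloosterman orbital integral of a Whittaker function against $\psi$ on $N$. Applying the theory of relative Shalika germs of Jacquet and Ye expands these orbital integrals, near the singular locus, into a finite sum of germ functions $\Gamma_{\mathcal{O}}(t)$ indexed by unipotent orbits, each multiplied by an orbital integral attached to a smaller Levi; the singular behaviour of $j_\pi(w_{G_n}t)$ is captured entirely by the germs. By the fundamental lemma of \cite{JY96,JY99}, each such germ is in turn expressible in terms of generalized Kloosterman sums $\mathrm{Kl}(w_{G_n};c,\psi)$ attached to the longest Weyl element as the conductor $c$ shrinks toward the singular stratum. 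Thus local integrability is reduced to a growth estimate on $\mathrm{Kl}(w_{G_n};c,\psi)$.

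The technical heart of the argument, and the main obstacle, is proving a \emph{non-trivial} upper bound for these Kloosterman sums, essentially of the form
\begin{equation*}
|\mathrm{Kl}(w_{G_n};c,\psi)| \leq C\, p^{\alpha\, v_p(c)}
\end{equation*}
with an exponent $\alpha$ strictly smaller than the trivial exponent $\dim N$, and with enough of a saving to compensate the $T$-Jacobian $\delta_B^{-1}$. This is the content of Theorem \ref{thm: w_n}. I plan to obtain it by the approach of G.\ Stevens reviewed in Section 4: parametrize the double cosets in the Bruhat cell $N w_{G_n} N$ by Plücker-type coordinates, factorize the sum into a product of elementary character sums on prime-power residue rings, and then exploit orthogonality of characters together with standard Gauss-sum cancellation at each stage. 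The delicate part is the combinatorial bookkeeping at the boundary levels where the level of the inner unipotent interacts with the valuation of the $T$-coordinate.

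Finally, inserting the Stevens-type bound into the Shalika germ expansion and summing the resulting geometric series over the conductor $c$ produces an explicit $L^1_{\mathrm{loc}}$ majorant for $j_\pi$ on any relatively compact open subset of $G_n$. Combined with the trivial bound on the regular part of the Bruhat cell and the vanishing of $j_\pi$ off $Bw_{G_n}B$, this yields Theorem \ref{inte}. I expect all the real difficulty to sit in the Kloosterman bound, Theorem \ref{thm: w_n}: the reduction steps are formal once the germ expansion is in hand, but a trivial $p^{\dim N \cdot v_p(c)}$ bound is never sufficient, and one must squeeze strictly positive cancellation uniformly in $n$.
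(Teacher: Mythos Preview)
Your overall strategy matches the paper's: reduce to a non-trivial bound on $\GL(n)$ Kloosterman sums via the Jacquet--Ye relative Shalika germs, and prove that bound by Stevens' method. However, two genuine gaps in your chain of reductions need to be filled.

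First, and most seriously, you cannot apply the Jacquet--Ye germ expansion directly to the stable integral $\int_N^{\mathcal N} W_v(gn)\psi^{-1}(n)\,dn$ that defines $j_\pi$. The germ theory in \cite{JY96,JY99} is formulated for orbital integrals $I(g,f)=\int_{N\times N} f(n_1^t g n_2)\psi^{-1}(n_1n_2)\,dn_1\,dn_2$ with $f\in C_c^\infty(G_n)$; the Whittaker function $W_v$ is not compactly supported, and the stable integral is a single integral over $N$, not a double one. The paper bridges this gap via Baruch's Theorem~\ref{transfer}, which says that near any point $x\in G_n$ there is a genuine test function $\phi\in C_c^\infty(G_n)$ with $j_{\pi,\psi}(g)=J_{\phi,\omega_\pi,\psi}(g)$ on a neighbourhood of $x$. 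Only after this replacement does the Jacquet--Ye machinery apply. Your invocation of Theorem~\ref{sta2} at this step is only the definition of $j_\pi$; it does not furnish the compactly supported test function that the germ expansion requires. Relatedly, the germs are indexed by \emph{relevant Weyl elements} $w\in R_{G_n}$, not by unipotent orbits, and the relevant input from \cite{JY99} is the germ expansion (Theorem~\ref{germ} here) together with the explicit formula for $K_e^{w_{G_n}}$ as an orbital integral of the characteristic function of $w_{G_n}K_{m'}$, not a ``fundamental lemma.''

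Second, your proposal lacks the analytic comparison step that actually converts the Kloosterman saving into integrability. The paper proves separately (Theorem~\ref{delta}) that $\Delta^{-1/2+\epsilon}$ is locally integrable on $G_n$ for every $\epsilon>0$, using the Dabrowski--Reeder formula for $O_{f_0}(a)$. The Kloosterman bound (Theorem~\ref{thm:\ w_n}) is then used to show $|K_e^{w_{G_n}}(a)\,\Delta^{1/2-\delta}(a)|$ is bounded for some $\delta>0$, and combining the two gives the $L^1_{\mathrm{loc}}$ conclusion. Your sentence ``with enough of a saving to compensate the $T$-Jacobian $\delta_B^{-1}$'' is the right intuition but is not yet an argument; without the explicit $\Delta^{-1/2+\epsilon}$ integrability you cannot close the loop. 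Finally, note that the saving you need is only for each fixed $n$ (the paper obtains exponent $1-\tfrac{1}{4n^2-18n+22}$), not uniform in $n$ as your last sentence suggests.
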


Harish-Chandra's proof of the local integrability of the distribution characters (See \cite{HC70} and \cite{HC99}) depends on certain relations between the asymptotics of the character and certain orbital integrals. In the Bessel function case, we have a similar result for such kind of relations.

For $\phi \in C_c^{\infty}(G_n)$ and $g \in Bw_{G_n} B$, we can define the orbital integral as follows:
$$ J_{\phi,\psi}(g):= \int_{N \times N} \phi(n_1 g n_2) \psi^{-1}(n_1 n_2) dn_1 dn_2.$$
It follows from the results in \cite{JY96} and \cite{JY99} that the above orbital integral converges absolutely and defines a locally constant function when $g \in B w_{G_n} B$. If $g \in G_n- B w_{G_n} B$, it is defined that $J_{\phi,\psi}(g)=0$.

Note that $w_{G_n} N w_{G_n}= \bar{N}$, where $\bar{N}$ means the opposite of the unipotent radical $N$. We can consider a similar orbital integral.

Let $f \in C_c^{\infty}(G_n)$ and $g \in \bar{N}B$, we define the orbital integral similarly:
$$ I_{f, \psi}(g):= \int_{N \times N} f(n_1^{t}gn_2) \psi^{-1}(n_1n_2)dn_1dn_2.$$
Here $n_1^t$ means the transpose of the unipotent radical $n_1$. From our assumptions on the additive character $\psi$, it is clear that $\psi(n)=\psi(w_{G_n} n^t w_{G_n})$ for all $n \in N$. The above integral converges absolutely and defines a locally constant function on $\bar{N}B$. Moreover, if $g \in G_n- \bar{N}B$,  it is defind that $I_{f,\psi}(g)=0$ which is the same way as before. We note that $I_{f,\psi}(g)=J_{\phi,\psi}(w_{G_n} g)$, where $f(w_{G_n} g)=\phi(g) \in C_c^{\infty}(G_n)$ and $f(g) \in C_c^{\infty}(G_n)$.

We may also consider the following orbital integral.

Let $f \in C_c^{\infty}(G_n)$ and $g \in \bar{N}B$. Let $Z_n$ be the center of $G_n$ and $\omega$ be a quasicharacter of $Z_n$. We define:
$$ I_{f, \omega, \psi}(g):= \int_{N \times Z_n \times N} f(n_1^{t}zgn_2) \omega^{-1}(z) \psi^{-1}(n_1n_2)dn_1dzdn_2.$$
The above integral converges absolutely and defines a locally constant function on $\bar{N}B$. Similarly, we extend the orbital integral $I_{f, \omega, \psi}(g)$ to a function on $G_n$ by setting $I_{f,\omega,\psi}(g)=0$ when $g \in G_n- \bar{N}B$.

Moreover, we define that
$$ J_{f, \omega, \psi}(g):= \int_{N \times Z_n \times N} f(n_1zgn_2) \omega^{-1}(z) \psi^{-1}(n_1n_2)dn_1dzdn_2.$$
This orbital integral converges absolutely and defines a locally constant function when $g \in B w_{G_n} B$. If $g \in G_n- B w_{G_n} B$, it is defined that $J_{f, \omega, \psi}(g)=0$. We note that $I_{f, \omega, \psi}(g)=J_{\phi, \omega, \psi}(w_{G_n} g)$, where $f(g)=\phi(w_{G_n} g) \in C_c^{\infty}(G_n)$ and $\phi(g) \in C_c^{\infty}(G_n)$.

The following theorem is proved in \cite{Ba05}.

\begin{thm}  \label{transfer}
Let $\pi$ be an smooth irreducible representation of $G_n$. Let $x \in G_n$, then there exists a neighbourhood $U_x$ of $x$ in $G_n$ and a function $\phi \in C_c^{\infty}(G_n)$ such that
$$ j_{\pi,\psi}(g)= J_{\phi,\omega_{\pi},\psi}(g) $$
for all $g \in U_x$.  Here $\omega_{\pi}$ is the central character of $\pi$.
\end{thm}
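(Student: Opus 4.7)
The strategy is to construct $\phi$ locally at each $x$, exploiting the shared symmetry between both sides. Both $j_{\pi,\psi}$ and $J_{\phi,\omega_\pi,\psi}$ satisfy the same left and right $(N,\psi)$-covariance, transform by $\omega_\pi$ under the center $Z_n$, and vanish outside the open Bruhat cell $Bw_{G_n}B$. So matching them on a neighborhood $U_x$ reduces to identifying their values on an $N \times Z_n \times N$-transversal slice around $x$, after which the claimed identity propagates by the common covariance.

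If $x \in Bw_{G_n}B$, I will write $x = n_0 t_0 w_{G_n} n_0'$ in Bruhat coordinates. The local constancy of $j_{\pi,\psi}$ on the open cell (from \cite{Ba05}) lets me shrink to a neighborhood of the form $U_x = N_0 \cdot A_0 \cdot w_{G_n} \cdot N_0'$ with $N_0, N_0' \subset N$ compact open subgroups on which $\psi$ is trivial and $A_0 \subset T$ a compact open neighborhood of $t_0$. Using the $Z_n$-covariance $j_{\pi,\psi}(zg) = \omega_\pi(z) j_{\pi,\psi}(g)$ and further shrinking, I arrange that $t \mapsto j_{\pi,\psi}(tw_{G_n})$ on $A_0$ is determined by its values at finitely many representatives $t_1,\ldots,t_r$ for cosets of a small compact open subgroup of $T/Z_n$. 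I then take
$$\phi = \sum_{i=1}^{r} c_i\, \chi_{N_1 \cdot Z_0 t_i \cdot w_{G_n} \cdot N_2}$$
with scalars $c_i$ proportional to $j_{\pi,\psi}(t_i w_{G_n})$ and $N_1, N_2 \subset N$, $Z_0 \subset Z_n$ suitably small, and a direct unfolding of the triple integral (using the triviality of $\psi$ on $N_1, N_2$ and of $\omega_\pi$ on $Z_0$) will verify $J_{\phi,\omega_\pi,\psi} = j_{\pi,\psi}$ throughout $U_x$.

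If $x \notin Bw_{G_n}B$, the open cell is dense, so $U_x \cap Bw_{G_n}B$ is nonempty and $j_{\pi,\psi}$ can behave wildly as one approaches $x$. Here I will appeal to the Jacquet--Ye theory of relative Shalika germs \cite{JY96,JY99}: the orbital integral $J_{\phi,\omega_\pi,\psi}$ admits, near the lower Bruhat strata, a finite expansion in universal germ functions with coefficients that are explicit linear functionals of $\phi$. By the representation-theoretic realization via the stable Whittaker integral (Theorem \ref{sta2}), the Bessel function $j_{\pi,\psi}$ admits a parallel germ expansion whose coefficients depend on $\pi$. I plan to construct $\phi$ as a finite sum of indicator-type pieces chosen so that each relevant germ coefficient of $J_{\phi,\omega_\pi,\psi}$ matches that of $j_{\pi,\psi}$; because only finitely many germs contribute in a sufficiently small $U_x$, this reduces to a finite-dimensional linear algebra problem.

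The main obstacle will be this boundary case, and within it the existence and explicit comparison of the germ expansions on both sides. Establishing these requires both the Jacquet--Ye germ theory for the orbital integral and the parallel expansion of $j_{\pi,\psi}$, each of which ultimately rests on the uniqueness of the Whittaker functional (Theorem \ref{uniWhi}). Once those expansions are in hand, the matching step is essentially linear algebra; the real depth of the statement comes from the germ theory itself.
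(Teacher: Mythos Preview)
The paper does not prove this theorem; it simply cites \cite{Ba05}. So I am comparing your proposal to Baruch's actual argument there.

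Your open-cell case is fine and close in spirit to what Baruch does. The serious gap is in your boundary case. You assert that ``by the representation-theoretic realization via the stable Whittaker integral (Theorem~\ref{sta2}), the Bessel function $j_{\pi,\psi}$ admits a parallel germ expansion.'' But Theorem~\ref{sta2} only says the stable integral converges and is filtration-independent; it says nothing about asymptotics near lower Bruhat strata. The Jacquet--Ye germ expansion (Theorem~\ref{germ}) is proved for orbital integrals of test functions $f\in C_c^\infty(G_n)$, and its proof uses the specific structure of such integrals. To transfer that expansion to $j_{\pi,\psi}$ you would essentially need to know that $j_{\pi,\psi}$ is locally an orbital integral --- which is exactly Theorem~\ref{transfer}. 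Your argument is circular.

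Baruch's proof in \cite{Ba05} avoids this entirely and does not split into open-cell versus boundary cases. The point is that for $g$ in any compact neighborhood $U_x$ of $x$, the stable integral defining $j_{\pi,\psi}(g)$ stabilizes \emph{uniformly}: there is a single compact open $N_0\subset N$ with
\[
j_{\pi,\psi}(g)=\int_{N_0} W_v(gn)\,\psi^{-1}(n)\,dn \qquad (g\in U_x),
\]
for a fixed Whittaker function $W_v$ normalized by $W_v(e)=1$. The set $U_x\cdot N_0$ is compact in $G_n$, hence its image in $NZ_n\backslash G_n$ is compact, and since $W_v$ is smooth and $(N,\psi)\times(Z_n,\omega_\pi)$-covariant one can write, for all $h$ in this compact set,
\[
W_v(h)=\int_{N\times Z_n}\phi(n_1 z h)\,\omega_\pi^{-1}(z)\,\psi^{-1}(n_1)\,dn_1\,dz
\]
for a suitable $\phi\in C_c^\infty(G_n)$ (a finite sum of indicator functions of $K_0$-cosets, exactly as in your open-cell construction). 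Substituting and unfolding gives $j_{\pi,\psi}(g)=J_{\phi,\omega_\pi,\psi}(g)$ on $U_x$. No germ theory enters; the uniformity of the stable integral replaces your case split.
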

From this theorem, we know that the Bessel function $j_{\pi, \psi}(g)$ is a locally integrable function on $G_n$ if  the orbital integral $J_{\phi, \omega_{\pi}, \psi}(g)$ is locally integrable as a function on $G_n$ for every $\phi \in C_c^{\infty}(G_n)$.

Similarly, we see that  $j_{\pi, \psi}(g)$ is a locally integrable function on $G_n$ if the orbital integral $I_{f,\omega_{\pi}, \psi}(g)$ is locally integrable as a function on $G_n$ for every $f \in C_c^{\infty}(G_n)$.


\begin{rmk}
Theorem \ref{uniWhi}, Definition \ref{sta1}, Theorem \ref{sta2} and Theorem \ref{transfer} in Section 2 hold for general $\GL_n(F)$, where $F$ is a general $p$-adic local field (See \cite{Sha74} and \cite{Ba05}). We also hope that Theorem \ref{inte} can be generalized from $\GL_n(\BQ_p)$ to general $\GL_n(F)$. We may discuss such generalization in our future work.
\end{rmk}


\section{Reduction of Proof}


In Section 3 and 4, we will explain the idea of the proof for Theorem \ref{inte}. By applying the results in \cite{Ba05}, \cite{JY96} and \cite{JY99}, we will reduce the proof of local integrability of Bessel functions to finding a non-trivial upper bound for the generalized Kloosterman sums in the case of $\GL_n(\BQ_p)$.

\subsection{Reduction Step 1: Asymptotic behaviour of the orbital integrals}

If for every $f \in C_c^{\infty}(G_n)$, $I_{f,\omega_{\pi}, \psi}(g)$ is locally integrable as a function on $G_n$, then the Bessel function $j_{\pi,\psi}$ is locally integrable in $G_n$. This is given by Theorem \ref{transfer} in Section 2. In order to prove that $I_{f, \omega_{\pi}, \psi}(g)$ is locally integrable as a function on $G_n$, it is important to study the asymptotic behaviour of the orbital integral when $g$ approaches to the boundary of the domain. From Bruhat decomposition, we can further assume that $g \in T$.

\subsection{Reduction Step 2: An estimation and comparison}

For $g \in G_n:=\GL_n(\BQ_p)$, let $\Delta_r(g)$, $1 \leq r \leq n$, be the determinant of the sub-matrix $g_{r,r}$ of $g \in G_n$ formed with the first $r$ rows and the first $r$ columns of $g$. Hence $\Delta_n(g)=\det g$. We define $\Delta: G_n \rightarrow \BR_{\geq 0}$ by
$$ \Delta(g):= \left \vert \frac{(\Delta_1(g))^2 \cdot (\Delta_2(g))^2 \cdots (\Delta_{n-1}(g))^2}{(\Delta_n(g))^2} \right \rvert. $$
It is known that $g \notin \bar{N}B$ (an open Bruhat cell) if and only if we have $\Delta(g)=0$. Moreover
$$ \Delta(\diag(a_1, a_2, \cdots, a_n))= \left \vert a_1^{2(n-2)}\cdot a_2^{2(n-3)}\cdots a_{n-2}^{2} \cdot a_n^{-2} \right \rvert.$$
Let $\delta$ be the modulus character of $B$, we have
$$ \delta(\diag(a_1, a_2, \cdots, a_n))= \left \vert a_1^{n-1}\cdot a_2^{n-3}\cdots a_{n-1}^{3-n} \cdot a_n^{1-n} \right \rvert.$$
Hence, we have
$$ \Delta(a)=\delta(a) \cdot \left \vert a_1 a_2 \cdots a_n \right \rvert^{n-3}= \delta(a) \cdot \vert \Delta_n(a)\rvert^{n-3}.$$

We first recall the main Theorem 0.3 proved by Dabrowski and Reeder \cite{DR98}.

For $i,j \in \{1,2,\cdots,n \}, i \neq j$, we let $\alpha_{i,j}: T \rightarrow \BQ_p^{\times}$ be the functions defined by
$$ \alpha_{i,j}( \diag(a_1,a_2,\cdots,a_n)):= \frac{a_i}{a_j}.$$
Let $\Phi= \{ \alpha_{i,j} \}, 1 \leq i,j \leq n$ be the root system of $G_n$ and let $\check{\Phi}= \{ \check{\alpha}: \alpha \in \Phi \}$. We have $\Phi= \Phi^{+} \bigsqcup \Phi^{-}$, where $\Phi^{+}= \{ \alpha_{i,j}: 1 \leq i<j \leq n \}$ is the set of positive roots and $\Phi^{-}= \{ \alpha_{i,j}: 1 \leq j<i \leq n \}$ is the set of negative roots. Let $\Delta= \{ \alpha_{i,i+1}: 1 \leq i \leq n-1 \}$ be the set of simple roots. Similarly, we write $\check{\Phi}= \check{\Phi}^{+} \bigsqcup \check{\Phi}^{-}$.

Let $f \in C_c^{\infty}(G_n)$ and let $g \in \bar{N}B$. We define the following orbital integral:
$$ O_f(g):= \int_{N \times N} f(n_1^t g n_2) dn_1 dn_2.$$
The convergence of this integral follows from \cite{JY96} and \cite{JY99}. In 1998, Dabrowski and Reeder \cite{DR98} studied this orbital integral when $f=f_0$ is the characteristic function of the maximal compact subgroup $K$. We recall that the Haar measure is normalized with the volumes of $K$ and $N \cap K$ both equal to one. We let $X=X(T):=\Hom (T,\BQ_p^{\times})$ be the group of $\BQ_p$-rational characters. Let $\check{X}=\check{X}(T):=\Hom (\BQ_p^{\times},T)$ be the set of co-characters. For each $a \in T$, there exists a unique $\lambda_a \in \check{X}$ such that $a= a_K \lambda_a(\omega)$ where $a_K \in T_K:=T \cap K$ and $\omega$ is the uniformizer of $\BQ_p$. The following result is the main theorem which is proved by Dabrowski and Reeder \cite{DR98}. 
\begin{thm} \label{DR1998} \cite[Theorem 0.3]{DR98}
Let $f_0$ be the characteristic function of $K$. Then $O_{f_0}(a)=0$ if $\lambda= \lambda_a \notin \BZ_{\geq 0} \check{\Phi}^{+}$, i.e., $\lambda_a$ is not a nonnegative integral linear combination of positive coroots. If $\lambda_a$ is such a linear combination, we can write
\begin{equation} \label{linear}
\lambda_a= \sum_{\beta \in \check{\Phi}^{+}} m_{\beta} \beta= \sum_{1 \leq i<j \leq n} m_{i,j} \check{\alpha}_{i,j},
\end{equation}
for $m_{i,j} \geq 0$ and we also write $\overline{m}=(m_{\beta})_{\beta \in \check{\Phi}^{+}}=(m_{i,j})_{1 \leq i<j \leq n}$. Then we have
\begin{equation}
O_{f_0}(a)= \Delta^{-\frac{1}{2}}(a) \times \sum_{\overline{m}} \left( 1- \frac{1}{p} \right)^{\kappa(\overline{m})},
\end{equation}
where $\kappa(\overline{m})$ is the number of strictly positive coordinates of $\overline{m}$ and $\overline{m}$ runs over all possible decompositions for $\lambda_a$.
\end{thm}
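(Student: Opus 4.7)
The plan is to compute $O_{f_0}(a) = \int_{\bar{N} \times N} f_0(\bar{n} a n)\, d\bar{n}\, dn$ by parameterizing the unipotent subgroups via root coordinates and then translating the condition $\bar{n} a n \in K$ into valuation conditions on those coordinates. Writing $\bar{n} = (x_{ij})$ as a lower unipotent matrix and $n = (y_{ij})$ as an upper unipotent matrix, and setting $g := \bar{n} a n$, a direct expansion of the matrix product shows that the leading $r \times r$ block of $g$ equals the product of the corresponding leading blocks of $\bar{n}$, $a$, and $n$, so $\Delta_r(g) = a_1 a_2 \cdots a_r$ exactly. The condition $g \in K$ forces every entry of $g$ to lie in $\BZ_p$ and $\det g \in \BZ_p^{\times}$; in particular $\Delta_r(g) \in \BZ_p$ for all $r$, which yields $v_p(a_1) + \cdots + v_p(a_r) \geq 0$ for $1 \leq r \leq n - 1$ and $v_p(a_1) + \cdots + v_p(a_n) = 0$. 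These dominance inequalities are equivalent to $\lambda_a$ lying in $\BZ_{\geq 0} \check{\Phi}^{+}$, which settles the vanishing statement.

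Assuming $\lambda_a$ is in the positive coroot cone, I would next stratify the integration domain by prescribing the $p$-adic valuations of all root coordinates $x_{ij}$ ($i > j$) and $y_{ij}$ ($i < j$). On each stratum, the requirement that every entry of $g$ lie in $\BZ_p$ becomes a finite system of congruences modulo powers of $p$ among the leading coefficients of these coordinates. The key observation is that this system is compatible precisely when the prescribed valuation pattern encodes a decomposition $\lambda_a = \sum_{i < j} m_{ij} \check{\alpha}_{ij}$ with nonnegative integers $m_{ij}$; distinct decompositions give rise to disjoint strata.

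On each admissible stratum, a change of variables that extracts the assigned power of $p$ from each coordinate turns the integrand into the characteristic function of a product of copies of $\BZ_p^{\times}$ (for directions with $m_{ij} > 0$) and of $\BZ_p$ (for the remaining directions). Each unit factor contributes $1 - 1/p$, producing the exponent $\kappa(\overline{m})$; the $p$-powers from the Jacobian combine with the absolute values of the $a_i$ to give $\Delta^{-1/2}(a)$. Summing over all admissible decompositions $\overline{m}$ then yields the stated formula.

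The main obstacle will be justifying the stratification and the leading-coefficient congruences rigorously: the off-diagonal entries of $g$ are polynomial combinations of many $x_{ij}$, $y_{kl}$, and $a_s$, so a priori cross-term cancellations could produce solutions outside the "leading" stratum, or cause several valuation patterns to collapse onto the same decomposition of $\lambda_a$. Controlling this cleanly should proceed either by induction on $n$, stripping off the last row and column and reducing to an orbital integral on $\GL_{n-1}$, or via the Iwahori factorization $K = \bigsqcup_{w \in W} I w I$ together with the Bruhat decomposition of $\bar{N} \times N$, which makes the Weyl-combinatorial origin of the sum over $\overline{m}$ transparent.
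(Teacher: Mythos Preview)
The paper does not contain a proof of this statement: Theorem~\ref{DR1998} is quoted verbatim from Dabrowski--Reeder \cite[Theorem~0.3]{DR98} and is used as a black box (its only role in the paper is to feed into the proof of Theorem~\ref{delta}). So there is no ``paper's own proof'' to compare your proposal against.

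As for the proposal itself: the vanishing argument is fine, since the dominance conditions $v_p(a_1)+\cdots+v_p(a_r)\geq 0$ with equality at $r=n$ are exactly the condition that $\lambda_a$ lies in $\BZ_{\geq 0}\check\Phi^+$. The formula part, however, is only a sketch with a clearly flagged gap. The crucial step you leave unproved is the bijection between admissible valuation strata and decompositions $\lambda_a=\sum m_{ij}\check\alpha_{ij}$, together with the claim that on each stratum the integrality of $g$ reduces to a product of independent unit or integrality conditions on the root coordinates. This is precisely where the work lies: cross-terms in the entries of $\bar n a n$ genuinely interact, and it is not obvious that the strata are parameterized by decompositions rather than by something finer or coarser. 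Dabrowski--Reeder do not proceed by a naive stratification of $\bar N\times N$; their argument passes through the Iwahori--Bruhat decomposition and a careful analysis of double cosets, which is closer to the second option you mention at the end. Your inductive suggestion (peeling off the last row and column) could also be made to work for $\GL_n$ specifically, but would require a nontrivial recursion relating the $\GL_n$ integral to a sum of $\GL_{n-1}$ integrals indexed by the $m_{1j}$ or $m_{in}$. Either way, what you have written is an outline of where the difficulty sits rather than a proof.
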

Above Theorem \ref{DR1998} holds for general $G(F)$, where $F$ is a general $p$-adic local field and $G$ is a connected split reductive group.

Using an idea similar to that in \cite[Section 3]{Ba04}, we can prove the following theorem
\begin{thm}  \label{delta}
$\Delta^{-\frac{1}{2}+\epsilon}$ is locally integrable as a function on $G_n=\GL_n(\BQ_p)$ for every $\epsilon>0$.
\end{thm}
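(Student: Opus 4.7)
The plan is to reduce the claim, via the factorization
\begin{equation*}
\Delta^{-1/2+\epsilon}(g)=\prod_{r=1}^{n-1}|\Delta_r(g)|^{-1+2\epsilon}\cdot |\det g|^{1-2\epsilon}
\end{equation*}
together with the boundedness of $|\det g|^{\pm 1}$ on every compact subset of $G_n$, to showing the local integrability of $F(g):=\prod_{r=1}^{n-1}|\Delta_r(g)|^{-1+2\epsilon}$. Since $\Delta_r(g)$ for $r\leq n-1$ depends only on the top-left $(n-1)\times (n-1)$ block $h$ of $g$, integrating out the last row, last column and bottom-right entry of $g$ yields a bounded factor and reduces matters to verifying
\begin{equation*}
I_n:=\int_{M_{n-1}(\BZ_p)}\prod_{r=1}^{n-1}|\Delta_r(h)|^{-1+2\epsilon}\,dh<\infty
\end{equation*}
for every $\epsilon>0$ (Lebesgue measure on $M_{n-1}$), the case of general compacts then following by a standard scaling argument.

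I would prove $I_n<\infty$ by induction on $n$, the base case $n=2$ being the elementary convergence $\int_{\BZ_p}|a|^{-1+2\epsilon}\,da<\infty$ for $\epsilon>0$. For the inductive step ($n\geq 3$), write $h=\left(\begin{smallmatrix}H&v\\w^T&x\end{smallmatrix}\right)$ with $H\in M_{n-2}(\BZ_p)$, $v,w\in \BZ_p^{n-2}$, $x\in \BZ_p$. On the dense open subset where $\det H\neq 0$, the Schur-complement identity $\det h=\det H\cdot s$ with $s:=x-w^T H^{-1}v$ recasts the integrand as
\begin{equation*}
\prod_{r=1}^{n-3}|\Delta_r(H)|^{-1+2\epsilon}\cdot |\det H|^{-2+4\epsilon}\cdot |s|^{-1+2\epsilon}.
\end{equation*}
Changing the variable $x\mapsto s$ (Jacobian $1$), the crux of the argument is to establish the uniform estimate
\begin{equation*}
\int_{\BZ_p^{n-2}\times \BZ_p^{n-2}\times \BZ_p}|s|^{-1+2\epsilon}\,dv\,dw\,dx\leq C\,|\det H|^{1-2\epsilon}
\end{equation*}
for $H$ ranging over a compact set with $\det H\neq 0$. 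Multiplying this bound by the factor $|\det H|^{-2+4\epsilon}$ converts the integrand over $H$ to $\prod_{r=1}^{n-2}|\Delta_r(H)|^{-1+2\epsilon}$, and integration over $H\in M_{n-2}(\BZ_p)$ yields $C\cdot I_{n-1}$, finite by the inductive hypothesis.

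The main technical obstacle lies in the uniform estimate above. Setting $s_0:=w^T H^{-1}v=(\det H)^{-1}Q(v,w)$ where $Q(v,w):=v^T\,\mathrm{adj}(H)\,w$ is the bilinear form attached to the adjugate of $H$, the inner $x$-integration evaluates to a constant on $\{|s_0|\leq 1\}$ and to $|s_0|^{-1+2\epsilon}$ on $\{|s_0|>1\}$. The remaining $(v,w)$-integration is then controlled by uniform bounds of the shape
\begin{equation*}
\vol\{(v,w)\in \BZ_p^{2(n-2)}:|Q(v,w)|\leq \delta\}\lesssim \delta\log(1/\delta),\qquad \int_{|Q|>\delta}|Q(v,w)|^{-1+2\epsilon}\,dv\,dw\leq C,
\end{equation*}
with $\delta:=|\det H|$, over the compact family of nondegenerate bilinear forms arising as $\mathrm{adj}(H)$. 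Both bounds admit a direct $p$-adic verification exploiting that the entries of $\mathrm{adj}(H)$ are bounded on compact sets and that $\det\mathrm{adj}(H)=(\det H)^{n-3}\neq 0$ whenever $\det H\neq 0$. The delicate point is the uniform control of the implicit constants as $\det H\to 0$, which can be handled by stratifying according to the Smith normal form of $\mathrm{adj}(H)$ and exploiting the ultrametric structure of $\BQ_p$ to prevent loss of constants.
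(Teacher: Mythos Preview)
Your reduction to the finiteness of $I_n=\int_{M_{n-1}(\BZ_p)}\prod_{r=1}^{n-1}|\Delta_r(h)|^{-1+2\epsilon}\,dh$ is sound, and the Schur-complement bookkeeping is correct. The argument breaks, however, at the ``uniform key estimate''
\[
\int_{\BZ_p^{n-2}\times\BZ_p^{n-2}\times\BZ_p}|s|^{-1+2\epsilon}\,dv\,dw\,dx\;\leq\;C\,|\det H|^{1-2\epsilon},
\]
which is \emph{false} for $n\geq 4$. Take $n=4$ and $H=p^{k}I_2$, so that $\det H=p^{2k}$, $H^{-1}=p^{-k}I_2$, and $s_0=w^TH^{-1}v=p^{-k}\,w\!\cdot\!v$. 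On the set $\{|w\!\cdot\!v|\leq p^{-k}\}$ one has $|s_0|\leq 1$ and hence $\int_{\BZ_p}|x-s_0|^{-1+2\epsilon}\,dx=c_\epsilon>0$. But for $v$ with $|v_1|=1$ (a set of measure $1-p^{-1}$) and arbitrary $w_2$, the condition $|w\!\cdot\!v|\leq p^{-k}$ determines $w_1$ modulo $p^k$, so $\mathrm{vol}\{|w\!\cdot\!v|\leq p^{-k}\}\geq (1-p^{-1})\,p^{-k}$. Consequently
\[
\int|s|^{-1+2\epsilon}\,dv\,dw\,dx\;\geq\;c_\epsilon(1-p^{-1})\,p^{-k},
\]
whereas your claimed right-hand side is $C\,p^{-2k(1-2\epsilon)}$; the ratio is $\asymp p^{k(1-4\epsilon)}\to\infty$ for $\epsilon<1/4$. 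Both auxiliary bounds you state are also wrong in this regime: with $\delta=|\det H|=p^{-2k}$ one has $\mathrm{vol}\{|Q|\leq\delta\}\asymp p^{-k}\gg\delta\log(1/\delta)$, and since $Q(v,w)=p^{k}\,w\!\cdot\!v$ one gets $\int_{|Q|>\delta}|Q|^{-1+2\epsilon}=p^{k(1-2\epsilon)}\int_{|w\cdot v|>p^{-k}}|w\!\cdot\!v|^{-1+2\epsilon}$, which grows like $p^{k(1-2\epsilon)}$ and is certainly not bounded uniformly in $H$. The underlying phenomenon is that $\mathrm{adj}(H)$ degenerates (all entries in $p^{k}\BZ_p$) even though it is formally nondegenerate, so no ``compact family of nondegenerate bilinear forms'' argument can rescue the stated constants; Smith-normal-form stratification does not help because the estimate itself, not merely its proof, fails.

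Your argument does go through for $n=3$ (where $H$ is a scalar and $\mathrm{adj}(H)=1$), which may be why the scheme looked plausible. For general $n$ the paper proceeds quite differently: it passes to the $\bar N T N$ decomposition, writes $\int_K\Delta^{-1/2+\epsilon}=\int_T\Delta^{1/2+\epsilon}(a)\,O_{f_0}(a)\,da$ with $O_{f_0}$ the unipotent orbital integral of the characteristic function of $K$, and then invokes the Dabrowski--Reeder formula $O_{f_0}(a)=\Delta(a)^{-1/2}\sum_{\overline m}(1-p^{-1})^{\kappa(\overline m)}$ together with a crude polynomial bound on the number of coroot decompositions. This replaces your delicate Schur-complement estimate by a purely combinatorial convergence over the cocharacter lattice.
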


\begin{proof}

If $\epsilon \geq \frac{1}{2}$, then $\Delta^{-\frac{1}{2}+\epsilon}$ is a continuous and smooth function on $G_n$. Therefore, there is nothing to prove.

If $0< \epsilon < \frac{1}{2}$, then $\Delta^{-\frac{1}{2}+\epsilon}$ is not well-defined if $g \notin \overline{N}B= \overline{N}TN$. However, since $\overline{N}TN$ is Zariski open dense in $G_n$, the complement subset of $\overline{N}TN$ in $G_n$ is closed and of measure zero. So we can define $\Delta^{-\frac{1}{2}+ \epsilon}(g):=0$ if $g \notin \overline{N}TN$. This will not affect the local integrability of the function $\Delta^{-\frac{1}{2}+\epsilon}$. Note that if $0<\epsilon<\frac{1}{2}$, then $\Delta^{-\frac{1}{2}+\epsilon}$ is not a continuous and smooth function on $G_n$.

We note that it is sufficient to prove that
$$ \int_{G_n} \Delta^{-\frac{1}{2}+\epsilon}(g)f(g)dg< +\infty $$
for every characteristic function $f$ of $K g_0$, where $K=\GL_n(\BZ_p)$ is the maximal compact open subgroup of $G_n$ and some fixed point $g_0 \in G_n$. From the Iwasawa decomposition, we can write $g_0=k_0b_0$ for some $b_0 \in B$ and $k_0 \in K$. Hence it is enough to show that $ \int_G \Delta^{-\frac{1}{2}+\epsilon}(g)f(g)dg< +\infty$ holds for $f=\rho_r(b_0^{-1})f_0$ where $\rho_r$ is the right translation and $f_0$ is the characteristic function of $K$.

By writing $b_0=a_0n_0$ for $a_0 \in T $ and $n_0 \in N$ we have
\begin{equation*}
\begin{aligned}
\int_{G_n} \Delta^{-\frac{1}{2}+\epsilon}(g)(\rho_r(b_0^{-1})f_0)(g)dg &= \int_{G_n} \Delta^{-\frac{1}{2}+\epsilon}(g)f_0(gb_0^{-1})dg\\
&= \int_{G_n} \Delta^{-\frac{1}{2}+\epsilon}(gb_0)f_0(g)dg\\
&= \Delta^{-\frac{1}{2}+\epsilon}(a_0n_0) \cdot \int_{G_n} \Delta^{-\frac{1}{2}+\epsilon}(g)f_0(g)dg\\
&= \Delta^{-\frac{1}{2}+\epsilon}(a_0) \cdot \int_{G_n} \Delta^{-\frac{1}{2}+\epsilon}(g)f_0(g)dg.
\end{aligned}
\end{equation*}
Hence, it is enough to prove that $\int_{G_n} \Delta^{-\frac{1}{2}+\epsilon}(g)f(g)dg< +\infty$ for $f=f_0$. Now applying the invariance properties of $\Delta$ and writing $dg=\delta(a)dn_1dadn_2=\Delta(a)dn_1 da dn_2$ on the Zariski open dense subset that consists of elements of the form $g= n_1^t a n_2$ where $n_1, n_2 \in N$ and $a \in T$ (Note that since $g= n_1^t a n_2 \in K$, we have $\vert \det(a) \rvert=\vert \Delta_n(a) \rvert=1$, which gives that $\delta(a) = \Delta(a)$), we can get the following
\begin{equation*}
\begin{aligned}
\int_{G_n} \Delta^{-\frac{1}{2}+\epsilon}(g)f_0(g)dg &= \int_{\overline{N} \times T \times N} \Delta^{-\frac{1}{2}+\epsilon}(g)f_0(g)dg\\
&= \int_{N \times T \times N} \Delta^{-\frac{1}{2}+\epsilon}(n_1^t a n_2)f_0(n_1^t a n_2) \Delta(a)dn_1 da dn_2\\
&= \int_T \Delta^{\frac{1}{2}+\epsilon}(a) \cdot O_{f_0}(a) da,
\end{aligned}
\end{equation*}
where
$$ O_{f_0}(a):= \int_{N \times N} f_0(n_1^t a n_2)dn_1dn_2.$$
Here we can change the order of integrations because $\Delta^{-\frac{1}{2}+\epsilon} \geq 0$ and $f_0 \geq 0$. The Haar measures on $G_n$, $N$ and $T$ are all normalized such that the volumes of $K$ and $N \cap K$ equal to one.

We recall that $X=X(T)=\Hom (T,\BQ_p^{\times})$ be the group of $\BQ_p$-rational characters. Let $\check{X}=\check{X}(T)=\Hom (\BQ_p^{\times},T)$ be the set of co-characters. For each $a \in T$, there exists a unique $\lambda_a \in \check{X}$ such that $a= a_K \lambda_a(\omega)$ where $a_K \in T_K:=T \cap K$ and $\omega$ is the uniformizer of $\BQ_p$. We can see that
\begin{equation*}
\begin{aligned}
\int_T \Delta^{\frac{1}{2}+\epsilon}(a) \cdot O_{f_0}(a)da &= \sum_{\lambda \in \check{X}} \int_{(T \cap K) \cdot \lambda(\omega)} \Delta^{\frac{1}{2}+\epsilon}(a) \cdot O_{f_0}(a) da\\
&= \left ( \int_{T \cap K} da \right ) \cdot \left ( \sum_{\lambda \in \check{X}} \Delta^{\frac{1}{2}+\epsilon}(\lambda(\omega)) \cdot O_{f_0}(\lambda(\omega)) \right ).
\end{aligned}
\end{equation*}
It is easy to see that
$$\int_{T \cap K} da = Vol(T \cap K)= \left(1-\frac{1}{p} \right)^n<1.$$
From Theorem \ref{DR1998} (\cite{DR98}), we know that the sum over $\lambda \in \check{X}$ takes place for $\lambda$ of the form
$$ \lambda= \sum_{i=1}^{n-1} m_i \cdot \check{\alpha}_{i,i+1},$$
where $ \check{\alpha}_{i,i+1}$ are positive simple coroots and $m_i$ are nonnegative integers. Moreover, we have the following estimation:
$$ O_{f_0}(\lambda(\omega)) = \Delta^{-\frac{1}{2}}(\lambda(\omega)) \times \sum_{\overline{m}} \left( 1- \frac{1}{p} \right)^{\kappa(\overline{m})} < p^{\sum_{i=1}^{n-1} m_i} \times R(a).$$
Here $\Delta(a)= \Delta(\lambda_a(\omega))= \Delta(\lambda(\omega))=p^{-2 \cdot \sum_{i=1}^{n-1} m_i}$ and $R(a):=R(\lambda_a)$ is the number of possibilities of writing $\lambda_a$ as in Equation \eqref{linear} (See Theorem \ref{DR1998}).  We let $1 \leq i \leq n-1$, the multiplicity of the simple coroot $\check{\alpha}_{i,i+1}$ in the coroot $\check{\alpha}_{s,t}$ ($1 \leq s<t \leq n$) equals to $i(n-i)$.

Now by direct estimation, we have
$$ R(a) \leq \prod_{i=1}^{n-1} (m_i+1)^{i(n-i)}.$$
We compute $\sum_{i=1}^{n-1} i(n-i)< n \times \sum_{i=1}^{n-1} i= \frac{(n-1)n^2}{2}< \frac{n^3}{2}$, which gives that
$$ R(a) \leq \prod_{i=1}^{n-1} (m_i+1)^{i(n-i)} < \left ( \sum_{i=1}^{n-1} m_i +n \right )^{\frac{n^3}{2}}.$$

From above discussion, we see that
$$ O_{f_0}(\lambda(\omega)) < p^{\sum_{i=1}^{n-1} m_i} \times  \left ( \sum_{i=1}^{n-1} m_i+n \right )^{\frac{n^3}{2}}.$$
Note that
$$ \Delta(a)= \Delta(\lambda_a(\omega))= \Delta(\lambda(\omega))= p^{-2 \cdot \sum_{i=1}^{n-1} m_i},$$
therefore we have
\begin{equation*}
\begin{aligned}
& \Delta^{\frac{1}{2}+\epsilon}(\lambda(\omega)) \cdot O_{f_0}(\lambda(\omega)) \\
< & p^{- 2 \epsilon \cdot \sum_{i=1}^{n-1} m_i} \times \left ( \sum_{i=1}^{n-1} m_i +n \right )^{\frac{n^3}{2}}.
\end{aligned}
\end{equation*}
Hence our integral is controlled by
\begin{equation*}
\begin{aligned}
1+ \sum_{m_1 \geq 0} \sum_{m_2 \geq 0} \cdots \sum_{m_{n-1} \geq 0} p^{- 2 \epsilon \cdot \sum_{i=1}^{n-1} m_i} \times \left (\sum_{i=1}^{n-1} m_i+n \right )^{\frac{n^3}{2}}
\leq &\quad 1+ \sum_{k=1}^{+\infty} \frac{(k+n)^n \times (k+n)^{\frac{n^3}{2}}}{p^{2 \epsilon k}} \\
\leq &\quad 1+ \sum_{k=1}^{+\infty} \frac{(k+n)^{n^3}}{p^{2 \epsilon k}}
\end{aligned}
\end{equation*}
which is finite when $\epsilon>0$ (Note that $n$ is a fixed positive integer). This proves that the function $\Delta^{-\frac{1}{2}+\epsilon}$ is locally integrable for every $\epsilon>0$.

\end{proof}

Above Theorem \ref{delta} holds for general $\GL_n(F)$, where $F$ is a general $p$-adic local field.

According to Theorem \ref{delta} and Theorem \ref{transfer}, the local integrability of Bessel function (See Theorem \ref{inte}) can be reduced to prove the following conjecture.

\begin{conj}  \label{conj1}
Fix $f \in C_c^{\infty}(G_n)$. Then $\vert I_{f, \omega_{\pi}, \psi}(g) \Delta^{\frac{1}{2}-\delta}(g) \rvert $ is bounded on compact sets in $G_n$ for some given $\delta>0$.
\end{conj}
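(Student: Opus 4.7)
The plan is to reduce Conjecture \ref{conj1} to a non-trivial bound for a generalized $\GL_n(\BQ_p)$ Kloosterman integral attached to the longest Weyl element, via the relative Shalika germ expansion of Jacquet--Ye.

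First I would exploit the covariance of the orbital integral. A direct change of variables shows
$$I_{f, \omega, \psi}(n_1^t z g n_2) = \omega(z) \psi(n_1 n_2) \, I_{f, \omega, \psi}(g), \qquad n_1, n_2 \in N,\; z \in Z_n,$$
so $|I_{f,\omega,\psi}|$ is invariant under $\bar{N} \times Z_n \times N$, and so is $\Delta$. Since the open Bruhat cell $\bar{N}TN$ is dense in $G_n$, it suffices to bound $|I_{f,\omega,\psi}(a) \Delta^{1/2-\delta}(a)|$ for $a \in T$. The compactness of the ambient set together with the compact support of $f$ forces $|\det a|$ to lie in a bounded range and the principal products $|a_1 a_2 \cdots a_r|$ ($r<n$) to be bounded above; however these products may become arbitrarily small, and that is precisely the regime in which $\Delta(a) \to 0$ and $I_{f,\omega,\psi}(a)$ can blow up.

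Next I would invoke the relative Shalika germ expansion from \cite{JY96,JY99}: as $a$ approaches the singular locus, the orbital integral admits an asymptotic expansion
$$ I_{f, \omega, \psi}(a) \;\sim\; \sum_{w \in W_{G_n}} \Gamma_w(f) \cdot K_w(a),$$
where each $\Gamma_w(f)$ is a constant (the relative Shalika germ) depending only on $f$, and $K_w(a)$ is a generalized Kloosterman integral attached to the Weyl element $w$. The dominant singular contribution, as $\Delta(a) \to 0$ along the deepest stratum, comes from $w = w_{G_n}$; the subdominant terms correspond to Levi subquotients and can in principle be controlled by induction on the rank.

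The heart of the argument, and the main obstacle, is to establish a non-trivial upper bound of the shape
$$|K_{w_{G_n}}(a)| \;\ll\; \Delta(a)^{-1/2+\delta}$$
for some $\delta > 0$, strictly better than the trivial square-root bound one obtains from the Dabrowski--Reeder formula (Theorem \ref{DR1998}). To this end I would adapt G. Stevens' $p$-adic method \cite{Ste87}: partition the Kloosterman integral according to residue classes modulo powers of the uniformizer, reorganize the inner sums as exponential sums in the coordinates parametrizing the open Bruhat cell, and extract cancellation from the nonlinear part of the phase. The delicate combinatorics of the $\GL_n$ Bruhat parametrization is what makes this step hard, and it is the content of Theorem \ref{thm: w_n}. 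Once this Kloosterman bound is in hand, substituting into the Shalika germ expansion gives $|I_{f,\omega,\psi}(a)| \ll \Delta(a)^{-1/2+\delta}$, and hence Conjecture \ref{conj1}.
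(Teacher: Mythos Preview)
Your overall strategy is correct and coincides with the paper's: reduce to the torus by covariance, apply the Jacquet--Ye relative Shalika germ expansion, handle the non-longest relevant Weyl elements by induction on rank (via their Levi-block structure), and prove the key non-trivial bound for the germ attached to $w_{G_n}$ using Stevens' decomposition of the Kloosterman integral (Theorem \ref{thm: w_n}).

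Two points, however, need correction. First, your formulation of the Shalika germ expansion is inverted. You write $I_{f,\omega,\psi}(a) \sim \sum_w \Gamma_w(f)\, K_w(a)$ with $\Gamma_w(f)$ a \emph{constant} depending on $f$ (``the relative Shalika germ'') and $K_w(a)$ a Kloosterman integral. In the actual expansion (Theorem \ref{germ}) the roles are reversed and the combination is a convolution, not a product:
\[
I(e\cdot,f) \;=\; \sum_{w'\in R_{G_n}} K_e^{w'} \star \omega_{w'},
\]
where the Shalika germs $K_e^{w'}$ are locally constant \emph{functions} on $A_e^{w'}$ that are \emph{independent of $f$}, and it is the $\omega_{w'}\in C_c^\infty(A_{w'})$ that depend on $f$. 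The object you must bound by $\Delta^{-1/2+\delta}$ is the germ $K_e^{w_{G_n}}$ itself, which the paper then identifies (Section 3.4) with an explicit Kloosterman integral $I(\alpha,\Phi)$ for a specific test function $\Phi$; this is what Theorem \ref{thm: w_n} estimates.

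Second, you jump directly to the germ expansion for $I_{f,\omega,\psi}$. The paper first removes the central character: Proposition \ref{prop1} shows that Conjecture \ref{conj11} for $I_{f,\psi}$ implies Conjecture \ref{conj1} for $I_{f,\omega_\pi,\psi}$, by a compactness argument on the central variable. Only after this reduction does the Shalika germ machinery of \cite{JY96,JY99} apply in the form stated. You should insert this step before invoking the germ expansion.
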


\begin{rmk}
In $\GL(3)$ case, the Conjecture \ref{conj1} was proved in \cite{Ba04} for $\delta=\frac{1}{8}>0$. Hence the local integrability for $\GL(3)$ Bessel functions is proved.
\end{rmk}

Similarly, we can establish the following conjecture:
\begin{conj}  \label{conj11}
Fix $f \in C_c^{\infty}(G_n)$. Then $\vert I_{f, \psi}(g) \Delta^{\frac{1}{2}-\delta}(g) \rvert $ is bounded on compact sets in $G_n$ for some given $\delta>0$.
\end{conj}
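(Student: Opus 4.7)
The plan is to reduce Conjecture \ref{conj11} to a non-trivial estimate on the generalized $\GL_n(\BQ_p)$ Kloosterman sum associated to the longest Weyl element $w_{G_n}$, of the form $|\mathrm{Kl}_{w_{G_n}}(a;\psi)| \leq C \cdot \Delta(a)^{-\frac{1}{2}+\delta}$ for some $\delta>0$. Such an estimate is precisely what Theorem \ref{thm: w_n} will supply in Section 5 via the method of G.\ Stevens \cite{Ste87}.

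First I would exploit the transformation rule $I_{f,\psi}(n_1^t g n_2) = \psi(n_1 n_2) I_{f,\psi}(g)$, which follows directly from a shift of the integration variables together with the fact that $\psi$ is a character of $N$ via the super-diagonal sum homomorphism. Combined with the Bruhat decomposition $\bar{N}B = \bar{N}TN$ (outside a closed set of measure zero where $I_{f,\psi}$ vanishes by definition) and the Iwasawa decomposition, this reduces the claim to bounding $|I_{f,\psi}(a) \Delta(a)^{\frac{1}{2}-\delta}|$ as $a \in T$ degenerates toward $\Delta(a)=0$ while remaining inside a fixed compact subset of $G_n$.

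Next I would compute $I_{f,\psi}(a)$ more explicitly. Writing $a = \diag(a_1,\ldots,a_n)$ and fixing a filtration $\CN = \{N_i\}$ of $N$ by congruence subgroups large enough to saturate the support of the integrand, the condition $n_1^t a n_2 \in \mathrm{supp}(f)$ confines $(n_1,n_2)$ to a compact open set whose volume grows like $\Delta(a)^{-1}$ (this is the same scaling phenomenon that underlies Theorem \ref{DR1998}). After a Jacquet–Ye style change of variables and truncation against a characteristic function of the support of $f$, the resulting integrand is an additive character evaluated on a linear form in the matrix entries, and the finite sum one obtains coincides, up to a bounded Jacobian factor, with the generalized Kloosterman sum attached to $w_{G_n}$ introduced in Section 4. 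Inserting the bound of Theorem \ref{thm: w_n} produces Conjecture \ref{conj11} with the same $\delta>0$.

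The main obstacle is the Kloosterman estimate itself. The trivial counting bound yields only $\Delta(a)^{-\frac{1}{2}}$, matching Theorem \ref{DR1998}, which is not sufficient to establish local integrability of $j_\pi$. Extracting any power saving $\delta>0$ requires Stevens' $p$-adic stationary phase analysis applied to an $\binom{n}{2}$-dimensional exponential sum, where the non-smoothness of the phase's critical locus forces a combinatorial case analysis that becomes progressively more intricate as $n$ grows. A secondary technical point is tracking the Jacobians from the change of variables above so that they agree with the $\Delta(a)^{-1}$ factor predicted by Dabrowski–Reeder; this ensures that any power saving on the Kloosterman side translates cleanly into the exponent $\delta$ needed in Conjecture \ref{conj11}.
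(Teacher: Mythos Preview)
Your endpoint is correct --- everything ultimately rests on the Kloosterman bound of Theorem \ref{thm: w_n} --- but the reduction you sketch has a genuine gap. The claim that after a change of variables ``the finite sum one obtains coincides, up to a bounded Jacobian factor, with the generalized Kloosterman sum attached to $w_{G_n}$'' is not correct for a general test function $f$ and a general direction of degeneration of $a\in T$. The Kloosterman sum $Kl_p(\psi_p;c,w_{G_n})$ of Section 4 is by construction the orbital integral of one \emph{specific} test function (essentially the characteristic function of $w_{G_n}K_m$), and it governs only the regime where \emph{all} of $\Delta_1(a),\ldots,\Delta_{n-1}(a)$ tend to zero simultaneously. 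When only a proper subset of the $\Delta_i(a)$ degenerates, the asymptotics of $I_{f,\psi}(a)$ are controlled by a different relevant Weyl element $w'\neq w_{G_n}$, and there is no single ``bounded Jacobian factor'' that converts $I_{f,\psi}(a)$ for arbitrary $f$ into the $w_{G_n}$-Kloosterman sum.

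The paper supplies exactly the missing structural layer: the Jacquet--Ye relative Shalika germ expansion (Theorem \ref{germ}) writes
\[
I_{f,\psi}\;=\;\sum_{w'\in R_{G_n}} K_e^{w'}\star \omega_{w'},
\]
where the germs $K_e^{w'}$ are \emph{universal} (independent of $f$) and the $\omega_{w'}\in C_c^\infty(A_{w'})$ absorb the dependence on $f$. This separates the $f$-dependent compactly supported part from the asymptotic behaviour, and stratifies the degeneration of $a$ by relevant Weyl element. The reduction then proceeds in two further steps you do not mention: (i) for $w'\neq w_{G_n}$ the germ $K_e^{w'}$ factors as a product of lower-rank longest-Weyl-element germs (Section 3.3), so by induction it suffices to bound $K_e^{w_{G_n}}$ for every $n$; (ii) $K_e^{w_{G_n}}$ itself has an explicit formula as the orbital integral of the specific function $\Phi=q^{\frac{n(n-1)}{2}m'}\cdot 1_{w_{G_n}K_{m'}}$ (Section 3.4), and it is \emph{this} object that equals the Kloosterman sum to which Theorem \ref{thm: w_n} applies. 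Without the germ expansion you have no mechanism for handling arbitrary $f$, and without the inductive factorisation you have no way to treat the partial-degeneration directions.
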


We have the following quick proposition.
\begin{prop}  \label{prop1}
Suppose that Conjecture \ref{conj11} holds for general $n$, then Conjecture \ref{conj1} also holds in general.
\end{prop}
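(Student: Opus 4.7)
The plan is to realize $I_{f,\omega_\pi,\psi}$ as an average of the simpler integral $I_{f,\psi}$ against the inverse central character, and to observe that once $g$ is restricted to a compact set, the effective range of the center variable is automatically compact. Conjecture \ref{conj1} will then follow immediately from Conjecture \ref{conj11} applied on an enlarged compact set, with the same $\delta$.

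First I would apply Fubini to the absolutely convergent integral defining $I_{f,\omega_\pi,\psi}(g)$ and write
\[
I_{f,\omega_\pi,\psi}(g) \;=\; \int_{Z_n} \omega_\pi^{-1}(z)\, I_{f,\psi}(zg)\, dz.
\]
Next, fix a compact set $C \subset G_n$ on which we want the bound. For the inner integrand $f(n_1^t z g n_2)$ to be nonzero, we need $n_1^t z g n_2 \in \mathrm{supp}(f)$; writing $z = z_0 I_n$ and taking determinants forces $z_0^n \det(g) \in \det(\mathrm{supp}(f))$, so that $z_0^n$ lies in the compact subset $\det(\mathrm{supp}(f))/\det(C)$ of $\BQ_p^\times$. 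Since the $n$-th power map is proper on $\BQ_p^\times$, the variable $z$ is confined to a compact subset $Z_{C,f} \subset Z_n$ independent of $n_1, n_2$. A direct computation based on $\Delta_r(zg) = z_0^r \Delta_r(g)$ yields the transformation rule
\[
\Delta(zg) \;=\; |z_0|^{n(n-3)}\, \Delta(g).
\]

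Substituting $\Delta(g)^{1/2-\delta} = |z_0|^{-n(n-3)(1/2-\delta)} \Delta(zg)^{1/2-\delta}$ and pulling absolute values inside the $z$-integral, I obtain
\[
\bigl|I_{f,\omega_\pi,\psi}(g)\bigr|\,\Delta(g)^{\frac{1}{2}-\delta} \;\leq\; \int_{Z_{C,f}} \bigl|\omega_\pi^{-1}(z)\bigr|\,|z_0|^{-n(n-3)(\frac{1}{2}-\delta)}\, \bigl|I_{f,\psi}(zg)\bigr|\,\Delta(zg)^{\frac{1}{2}-\delta}\, dz.
\]
As $g$ ranges over $C$ and $z$ over $Z_{C,f}$, the product $zg$ stays in a compact subset $C' \subset G_n$, and Conjecture \ref{conj11} (applied to the same $f$, with its corresponding $\delta$) gives a uniform bound for $|I_{f,\psi}(zg)|\Delta(zg)^{1/2-\delta}$ on $C'$. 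The quasicharacter $|\omega_\pi^{-1}(z)|$ and the power $|z_0|^{-n(n-3)(1/2-\delta)}$ are manifestly bounded on the compact set $Z_{C,f}$, so the right-hand side is bounded uniformly for $g \in C$. There is no serious obstacle: the content of the proposition is just that the central twist amounts to a finite averaging of $I_{f,\psi}$ against a bounded weight over a compact piece of the center, which preserves the desired boundedness of $|I \cdot \Delta^{1/2-\delta}|$.
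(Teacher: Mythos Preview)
Your proof is correct and follows essentially the same approach as the paper's own proof: both write $I_{f,\omega_\pi,\psi}(g)$ as an integral of $\omega_\pi^{-1}(z)I_{f,\psi}(zg)$ over $Z_n$, use a determinant argument to confine $z$ to a compact set once $g$ lies in a compact set, and then invoke Conjecture~\ref{conj11} on the enlarged compact set together with the multiplicativity $\Delta(zg)=\Delta(z)\Delta(g)$ (your explicit formula $\Delta(zg)=|z_0|^{n(n-3)}\Delta(g)$ is exactly this). The only cosmetic differences are that the paper phrases the compactness of $z$ via bounds on $\det$ rather than properness of the $n$-th power map, and verifies the $\Delta$-relation by passing through the Bruhat form $g=n_1^t a n_2$ rather than computing $\Delta_r(zg)=z_0^r\Delta_r(g)$ directly.
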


\begin{proof}
Since $f \in C_c^{\infty}(G_n)$, we let $Q_1$ be the support of $f$. Since $Q_1$ is compact, it follows that $\vert \det(g) \rvert$ have both lower and upper bounds for $g \in Q_1$. Hence the support of the orbital integral $I_{f,\psi}(g)$ is also on a set on which the determinant is bounded.

Now let $Q_2$ be a compact set in $G_n$. We will have to show that $\vert I_{f, \omega_{\pi}, \psi}(g) \Delta^{\frac{1}{2}-\delta}(g) \rvert $ is bounded on $Q_2$ in $G_n$ for the fixed $\delta>0$ in Conjecture \ref{conj11}. If $g \in Q_2$, $z \in Z_n$ and $gz$ is in the support of the orbital integral $I_{f,\psi}(g)$, we have $\det(gz)=\det(g)\times \det(z)$ is in some fixed compact set in $F^{\times}$. Hence we know that $z$ is in a fixed compact set $P$ in $Z_n$ which is independent on the choice of $g \in Q_2$.

We let $C_1:= \max_{z \in P} \left( \vert \omega_{\pi}(z)^{-1} \rvert \right)>0$ and $g \in Q_2$. By Conjecture \ref{conj11}, there exists a constant $C_2>0$ such that
$$ \vert I_{f,\psi}(gz) \rvert \leq C_2 \cdot \Delta^{-\frac{1}{2}+ \delta}(gz) $$
for all $g \in Q_2$ and $z \in P$. Hence, if $g \in Q_2$, then we have
\begin{equation}
\begin{aligned}
\left \vert I_{f,\omega_{\pi},\psi}(g) \right \rvert &= \left \vert \int_{Z_n} I_{f,\psi}(gz) \omega_{\pi}^{-1}(z) dz \right \rvert \\
&= \left \vert \int_{P} I_{f,\psi}(gz) \omega_{\pi}^{-1}(z) dz \right \rvert \\
& \leq C_1 C_2 \times \int_{P} \left \vert \Delta^{-\frac{1}{2}+ \delta}(gz) \right \rvert dz.
\end{aligned}
\end{equation}
Note that $I_{f,\omega_{\pi},\psi}(g)=0$ if $g \notin \bar{N}B$. Now we may write $g=n_1^t a n_2$ for $n_1, n_2 \in N$ and $a \in T$. Hence we have $\Delta^{-\frac{1}{2}+ \delta}(gz)= \Delta^{-\frac{1}{2}+ \delta}(n_1^t a n_2 z)=\Delta^{-\frac{1}{2}+ \delta}(n_1^t az n_2)=\Delta^{-\frac{1}{2}+ \delta}(az)=\Delta^{-\frac{1}{2}+ \delta}(a) \times \Delta^{-\frac{1}{2}+ \delta}(z)=\Delta^{-\frac{1}{2}+ \delta}(g) \times \Delta^{-\frac{1}{2}+ \delta}(z)$. Since $P$ is compact, there exists $C_3>0$ such that
$$ \int_{P} \left \vert \Delta^{-\frac{1}{2}+ \delta}(z) \right \rvert dz < C_3.$$
Therefore we have
$$ \left \vert I_{f,\omega_{\pi},\psi}(g) \right \rvert < C_1 C_2 C_3 \left \vert \Delta^{-\frac{1}{2}+ \delta}(g) \right \rvert.$$
This is equivalent to
$$ \left \vert I_{f, \omega_{\pi}, \psi}(g) \Delta^{\frac{1}{2}-\delta}(g) \right \rvert < C_1 C_2 C_3,$$
which is exactly Conjecture \ref{conj1}.

\end{proof}

\subsection{Reduction Step 3: (Relative) Shalika germs}

In order to study the asymptotic behaviour of the orbital integrals, we need to introduce the (relative) Shalika germs of Jacquet-Ye \cite{JY96} \cite{JY99}.

We define the generalized orbital integrals of a function $f \in C_c^{\infty}(G_n)$, that is, the functionals
$$ I(g,f):= \int f(n_1^t g n_2) \psi^{-1}(n_1 n_2)dn_1 dn_2$$
for $g \in G_n$. This is an extension of the previous orbital integral $I_{f,\psi}(g)$. The integration domain is explicitly given as follows. By the Bruhat decomposition, we can write $g=wa$ where $w \in W_{G_n}$ and $a \in T$. In order for the integral to make sense we must assume that $\psi(n_1 n_2)=1$ if $n_1^t wa n_2= wa$. However, this assumption is not always true for all the Weyl elements. A Weyl element $w$ is said to be relevant if this condition is satisfied. The integral is then taken over the quotient of $N \times N$ by the subgroup of elements $(n_1,n_2)$ of $N \times N$ satisfying $n_1^t wa n_2= wa$. If $w= I_n \in G_n$, we have $I(g,f)=I(wa,f)=I(a,f)=I_{f,\psi}(a)=I_{f,\psi}(g)$, where the orbital integral $I_{f,\psi}$ is defined in Section 2. We let an element $a \in T$ operate on $G_n$ by $g \rightarrow ga$.

Let $W_{G_n}$ be the Weyl group of $G_n$ and $R_{G_n}$ be the subset of relevant elements of $W_{G_n}$. For each positive integer $r$ we denote by $w_{G_r}$ the $r \times r$ permutation matrix whose anti-diagonal entries are 1. If $w \in R_{G_n}$ then $w$ has the form
$$ w= \begin{pmatrix}
w_{G_{n_1}} & 0 & 0 & \cdots & 0 \\ 0 & w_{G_{n_2}} & 0 & \cdots & 0 \\ \cdots & \cdots & \cdots & \cdots & \cdots \\ 0 & 0 & 0 & \cdots & w_{G_{n_r}}
\end{pmatrix}
$$
for a suitable $r$-tuple of positive integers $(n_1,n_2,\cdots,n_r)$ with $\sum_{i=1}^r n_i=n$. This is given in \cite{JY96}. Hence, there are $2^{n-1}$ relevant Weyl elements in $\GL(n)$ case (We may label them as $e=I_n,w_1,w_2, \cdots$ $\cdots,w_{2^n-2},w_{G_n}$).

A parabolic subgroup $P_n$ of $G_n$ is called standard if it contains $TN=B$. Then $P_n=M_nU_n$ where $M_n$ is the unique Levi subgroup of $P_n$ containing $T$ and $U_n$ is the unipotent radical of $P_n$. A Levi subgroup of this type is said to be standard. It is said to be of type $(n_1,n_2,\cdots,n_r)$ where $\sum_{i=1}^r n_i=n$ if it consists of matrices of the following form
$$ m= \begin{pmatrix}
m_1 & 0 & 0 & \cdots & 0 \\ 0 & m_2 & 0 & \cdots & 0 \\ \cdots & \cdots & \cdots & \cdots & \cdots \\ 0 & 0 & 0 & \cdots & m_r
\end{pmatrix}
$$
with $m_i \in \GL(n_i)$. If $M_n$ is the standard Levi subgroup of type $(n_1,n_2,\cdots,n_r)$, we can write $M_n=M_w$ and $w=w_{M_n}$.

\begin{exmp}
In the special case $n=4$, all the $2^{4-1}=8$ relevant Weyl elements can be represented as follows:

$$ e= I_4, \;\;\; w_{G_4},$$
$$ w_1= \begin{pmatrix} 1 & 0 \\ 0 & w_{G_3} \end{pmatrix}, \;\;\; w_2= \begin{pmatrix} w_{G_2} & 0 \\ 0 & w_{G_2} \end{pmatrix}, \;\;\; w_3= \begin{pmatrix} w_{G_3} & 0 \\ 0 & 1 \end{pmatrix},$$
$$ w_4= \begin{pmatrix} 1&& \\ & w_{G_2} & \\ && 1 \end{pmatrix}, \;\;\; w_5= \begin{pmatrix}1&& \\ &1& \\ && w_{G_2} \end{pmatrix}, \;\;\; w_6= \begin{pmatrix} w_{G_2} && \\ & 1 & \\ &&1  \end{pmatrix}.$$
\end{exmp}

Now we continue for the general case. Let $M_n$ be a standard Levi subgroup of $G_n$. We set $W_{M_n}:= W_{G_n} \cap M_n$ and $R_{M_n}= R_{G_n} \cap M_n$. We let $A_{M_n}$ be the center of $M_n$. If $w$ is in $R_{G_n}$ then we set $A_w= A_{M_w}$. Any relevant orbit of $N \times N$ contains a unique representative of the form $wa$ with $w \in R_{M_n}$ and $a \in A_w$.

If $w_1$ and $w_2$ are in $R_{G_n}$, we write $w_1 \rightarrow w_2$ if $M_{w_1} \subseteq M_{w_2}$. This is equivalent to $A_{w_1} \supseteq A_{w_2}$ and also to $w_1 \in R_{M_{w_2}}$.

We recall that $\Delta_r(g)$ is the determinant of principal $r \times r$ minor of $g$ for $1 \leq r \leq n$. Let $\widetilde{\Delta}(G_n)$ be the set of the functions $\Delta_r$ on $G_n$. These functions are invariant under the action of $N \times N$, where the action of $N \times N$ on $G_n$ is given by $g \rightarrow n_1^t g n_2$ for an element $(n_1, n_2) \in N \times N$.

Suppose $w_1 \rightarrow w_2$, that is, $w_1 \in R_{M_{w_2}}$. This implies that if $\widetilde{\Delta} \in \widetilde{\Delta}(G_n)$ and $\widetilde{\Delta}(w_2) \neq 0$ then $\widetilde{\Delta}(w_1 a) \neq 0$ as well for any $a \in A_{w_1}$. We can therefore define the subset $A_{w_1}^{w_2}$ of $a \in A_{w_1}$ such that
$$ \widetilde{\Delta}(w_1 a)=\widetilde{\Delta}(w_2) $$
for each $\widetilde{\Delta} \in \widetilde{\Delta}(G_n)$ such that $\widetilde{\Delta}(w_2) \neq 0$.

Using above notations, we are able to state the follow lemma, which was proved in \cite{JY99}.

\begin{lem}
Suppose $w_1 \rightarrow w_2$ and $a \in A_{w_1}$. Then there are only finitely many pairs $(b,c)$, $b \in A_{w_1}^{w_2}$, $c \in A_{w_2}$ such that $a=bc$.
\end{lem}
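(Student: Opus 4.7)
The plan is to parametrize $A_{w_1}$, $A_{w_2}$, and $A_{w_1}^{w_2}$ explicitly as subvarieties of the diagonal torus of $\GL_n(\BQ_p)$, and then to show that the decomposition $a=bc$ reduces to a triangular system of monomial equations whose solution set is finite. Let the type of $M_{w_1}$ be $(n_1^{(1)},\ldots,n_{r_1}^{(1)})$ and the type of $M_{w_2}$ be $(n_1^{(2)},\ldots,n_{r_2}^{(2)})$; the refinement $M_{w_1}\subseteq M_{w_2}$ yields indices $0=k_0<k_1<\cdots<k_{r_2}=r_1$ with $n_\ell^{(2)}=\sum_{i=k_{\ell-1}+1}^{k_\ell}n_i^{(1)}$. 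Every $b\in A_{w_1}$ then has the block-scalar form $\diag(t_1 I_{n_1^{(1)}},\ldots,t_{r_1}I_{n_{r_1}^{(1)}})$, while $c\in A_{w_2}$ corresponds to the further constraint that the $t_i$'s are constant on each block $\{k_{\ell-1}+1,\ldots,k_\ell\}$, with common value $s_\ell$.

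Next I will read off the defining conditions of $A_{w_1}^{w_2}$. Using the block anti-diagonal shape of $w_1$, a direct minor computation shows that $\Delta_r(w_1 b)$ vanishes unless $r$ is a partial sum of the finer partition, while $\Delta_r(w_2)\neq 0$ only at the $r_2$ partial sums $r(j)=n_1^{(2)}+\cdots+n_j^{(2)}$ of the coarser partition. The equalities $\Delta_{r(j)}(w_1 b)=\Delta_{r(j)}(w_2)$ therefore collapse to the $r_2$ character equations
\[
\chi_j(b):=\prod_{i=1}^{k_j}t_i^{n_i^{(1)}}=\eta_j,\qquad j=1,\ldots,r_2,
\]
where $\eta_j\in\{\pm 1\}$ is a sign determined by the permutation matrices $w_{G_{n_\ell^{(2)}}}$ and $w_{G_{n_i^{(1)}}}$.

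Finally I will solve for $c$. Writing $b=ac^{-1}$ and using multiplicativity of each $\chi_j$, the condition $b\in A_{w_1}^{w_2}$ becomes $\chi_j(c)=\chi_j(a)/\eta_j$; substituting the explicit form of $c\in A_{w_2}$ gives
\[
\prod_{\ell=1}^{j}s_\ell^{n_\ell^{(2)}}=\chi_j(a)/\eta_j,\qquad j=1,\ldots,r_2.
\]
This is a lower triangular monomial system in $(s_1,\ldots,s_{r_2})$ with positive diagonal exponents $n_j^{(2)}$: once $s_1,\ldots,s_{j-1}$ are fixed, $s_j$ must satisfy an equation of the shape $s_j^{n_j^{(2)}}=\gamma_j$ for some known $\gamma_j\in\BQ_p^\times$, which has at most $n_j^{(2)}$ roots in $\BQ_p^\times$. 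Inductively there are at most $\prod_{j=1}^{r_2}n_j^{(2)}$ valid choices of $c$, each one forcing $b=ac^{-1}$ uniquely, which establishes the claimed finiteness. The only real technical burden will be the explicit evaluation of the principal minors on block anti-diagonal matrices and the careful tracking of the $\pm 1$ signs $\eta_j$; the triangularity of the exponent matrix on $A_{w_2}$ is the structural feature that makes the count finite.
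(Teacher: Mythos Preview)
Your argument is correct: the parametrization of $A_{w_1}$ and $A_{w_2}$ by block scalars, the identification of the nonvanishing principal minors of $w_2$ with the partial sums of the coarser partition, and the resulting lower-triangular monomial system $\prod_{\ell\le j}s_\ell^{\,n_\ell^{(2)}}=\chi_j(a)/\eta_j$ are all valid, and the bound $\#\{c\}\le\prod_j n_j^{(2)}$ follows immediately.

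Note, however, that the paper does not give its own proof of this lemma; it simply records the statement and attributes it to Jacquet--Ye \cite{JY99}. So there is no in-paper argument to compare against. Your explicit computation is a clean, self-contained proof that the paper could have included; the only cosmetic point is that the remark about $\Delta_r(w_1 b)$ vanishing off the finer partial sums, while true, is not actually needed---you only use the values at the coarser partial sums $r(j)$, which are automatically partial sums of the finer partition.
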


From the above lemma, we can state the following definition. If $f_1$ and $f_2$ are any complex valued and locally constant functions on $A_{w_1}^{w_2}$ and $A_{w_2}$ respectively, we can define a new function $f_1 \star f_2$ on $A_{w_1}$ by the formula
$$ f_1 \star f_2(a):= \sum f_1(b)f_2(c), $$
where the sum is over all pairs $(b,c)$, $b \in A_{w_1}^{w_2}$, $c \in A_{w_2}$ such that $a=bc$. Moreover, $f_1 \star f_2(a)=0$ if there is no such pair. We can now state the main result in \cite{JY96} and \cite{JY99}.

\begin{thm}  \label{germ}
For each pair $(w,w') \in R_{G_n} \times R_{G_n}$ with $w \rightarrow w'$, there exist a family of locally constant functions $K_w^{w'}$ defined on $A_w^{w'}$, satisfying the following properties.
\begin{itemize}
\item If $w'=w$ then $K_w^{w'}=\delta_e$ the Dirac delta function on the finite set $A_w^w$.
\item For each function $f \in C_c^{\infty}(G_n)$, there are functions $\omega_w \in C_c^{\infty}(A_w)$ which depend on the choice of function $f$, $w \in R_{G_n}$, such that, for any $w \in R_{G_n}$, the corresponding generalized orbital integral satisfies
$$ I(w \cdot ,f) = \sum_{w' \in R_{G_n},\;w \rightarrow w'} K_w^{w'} \star \omega_{w'}.$$
\end{itemize}
In this expression, locally constant functions $K_w^{w'}$ are independent on the choice of the test function $f$.
\end{thm}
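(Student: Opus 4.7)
The plan is to establish the germ expansion by descending induction with respect to the partial order $\to$ on $R_{G_n}$, starting from $w = w_{G_n}$ (whose Levi $M_{w_{G_n}} = G_n$ is maximal among all the $M_w$, so only $w_{G_n}$ itself appears on the right-hand side) and working down to $w = e = I_n$ (whose Levi is $T$, so every relevant Weyl element appears in the sum). For the base case, the asserted identity collapses to $I(w_{G_n}\cdot, f) = \delta_e \star \omega_{w_{G_n}} = \omega_{w_{G_n}}$, and one simply sets $\omega_{w_{G_n}}(a) := I(w_{G_n} a, f)$. Local constancy of $\omega_{w_{G_n}}$ follows from the smoothness of $f$; compact support follows because $w_{G_n} a \in \bar N B$ for every $a \in A_{w_{G_n}}$, and the $N \times N$-orbit of $w_{G_n} a$ meets $\mathrm{supp}\, f$ only on a compact subset of $A_{w_{G_n}} = Z_n$.

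For the inductive step, fix $w \in R_{G_n}$ with $w \neq w_{G_n}$ and assume the germs $K_{w'}^{w''}$ together with the test-function-dependent functions $\omega_{w'} \in C_c^\infty(A_{w'})$ have already been produced for every $w' \neq w$ with $w \to w'$. The idea is to analyze $I(wa, f)$ on a neighborhood of each boundary point $b \in A_w^{w'}$, a locus on which, by its defining condition, every principal minor $\widetilde{\Delta}$ that is nonzero at $w'$ takes exactly the value $\widetilde{\Delta}(w')$ on $wb$, so $wb$ ``looks like'' $w'$. In such a neighborhood I would perform a local change of variables on $N \times N$ that splits the double integration into a transverse piece, contributing a scalar $K_w^{w'}(b)$ depending only on the pair $(w, w')$ and on $b$, and a longitudinal piece along the $w'$-stratum whose integral matches the inductively built $\omega_{w'}(c)$ with $c \in A_{w'}$ determined by the factorization $a = bc$. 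The finiteness lemma preceding the theorem guarantees that the convolution
\[
K_w^{w'} \star \omega_{w'}(a) = \sum_{a = bc} K_w^{w'}(b)\,\omega_{w'}(c)
\]
is a finite sum and locally constant. One then \emph{defines}
\[
\omega_w(a) := I(wa, f) - \sum_{\substack{w' \in R_{G_n}\\ w \to w',\; w' \neq w}} K_w^{w'} \star \omega_{w'}(a),
\]
and the task is to verify that $\omega_w \in C_c^\infty(A_w)$ and close the induction.

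The main obstacle is twofold. First, one must execute the local separation simultaneously for all $w' \neq w$ with $w \to w'$ and show that the collected germ contributions account for every source of non-smoothness and non-compact support of $I(w\cdot, f)$ as $a$ approaches the boundary of $A_w$, so that the residual $\omega_w$ is genuinely smooth and compactly supported. This rests on the absolute-convergence results of \cite{JY96} and \cite{JY99} together with a careful compatibility check as $b$ moves between different strata $A_w^{w'}$. Second, one must show that $K_w^{w'}$ is intrinsic to the pair $(w, w')$ and independent of $f$. I would argue this by varying $f$: two test functions $f_1, f_2$ that induce identical $\omega_{w'}$ near a given $b$ for every $w' \neq w$ with $w \to w'$ must by the defining formula above also induce the same $\omega_w$ near $b$, and since the freedom to independently vary the $\omega_{w'}$ is dense enough to isolate each coefficient, the scalars $K_w^{w'}(b)$ are uniquely determined. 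Propagating this uniqueness through the induction yields the universal, $f$-independent germs $K_w^{w'}$ claimed in the theorem.
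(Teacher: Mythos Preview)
The paper does not supply its own proof of this theorem: it is stated as ``the main result in \cite{JY96} and \cite{JY99}'' and simply quoted from Jacquet--Ye's work. So there is no in-paper argument to compare against; the relevant benchmark is the Jacquet--Ye construction itself.

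Your outline follows their strategy faithfully: descending induction along the partial order $\to$ on $R_{G_n}$, with the base case $w=w_{G_n}$ handled directly and, at each inductive step, $\omega_w$ \emph{defined} as the residual $I(w\cdot,f)-\sum_{w'\neq w}K_w^{w'}\star\omega_{w'}$. That is exactly the architecture of \cite{JY99}.

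What you have written, however, is a plan rather than a proof, and you are candid about this. The two obstacles you flag are real and constitute the bulk of \cite{JY99}. In particular: (i) the germs $K_w^{w'}$ are not merely ``scalars arising from a local change of variables'' but are \emph{constructed} inductively by Jacquet--Ye via an explicit descent to the Levi $M_{w'}$, using the factorization $K_e^{w'}(a)=\prod_i K_{e_i}^{w_i'}(a_i)$ alluded to later in the paper; your sketch asserts the existence of such a separation without producing it. (ii) Showing that the residual $\omega_w$ is compactly supported requires controlling the orbital integral uniformly as several principal minors $\Delta_i$ degenerate simultaneously, which is precisely the delicate boundary analysis carried out in \cite{JY99}; invoking ``absolute-convergence results'' is not enough, since absolute convergence of $I(wa,f)$ for each fixed $a$ says nothing about its growth as $a$ escapes to the boundary. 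If you intend more than a citation, those two points need either a full argument or a precise pointer to the relevant sections of \cite{JY99}.
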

We recall that $I(w \cdot, f)$ is the generalized orbital integral defined from $A_w$ to $\BC$ as follows:
For any $a \in A_w$, we define
\[
I(wa,f):= \int f(n_1^t wa n_2) \psi^{-1}(n_1 n_2) dn_1 dn_2,
\]
where the integral is taken over the quotient of $N(\BQ_p) \times N(\BQ_p)$ by the subgroup of pairs $(n_1,n_2)$ such that $n_1^t wa n_2=wa$.

In above Theorem \ref{germ}, $K_w^{w'}$ are called the (relative) Shalika germs according to Jacquet-Ye's paper \cite{JY96} and \cite{JY99}.

For the special case $G_4=\GL_4(\BQ_p)$, let $K_e^{w_i}$ ($i=1,2,3,4,5,6$) and $K_e^{w_{G_4}}$ be the relative Shalika germs. We can state a special case of above theorem as follows:

\begin{exmp}  \label{germ2}
In the $\GL(4)$ case, the generalized orbital integral $I(w \cdot, f)$ has the following asymptotic behaviour if we fix $w=e=I_4$:
\begin{equation} \label{equ1}
I(e \cdot, f)= I_{f , \psi}= \omega_e+ \sum_{i=1}^{6} K_e^{w_i} \star \omega_{w_i} + K_e^{w_{G_4}}\star \omega_{w_{G_4}},
\end{equation}
where $\omega_e$, $\omega_{w_{G_4}}$ and $\omega_{w_i}$ ($i=1,2,3,4,5,6$) are all locally constant and compact supported functions which are dependent on the choice of $f$.
\end{exmp}

\begin{rmk}  \label{gl4}
The above theorem actually gives the asymptotic behaviour of orbital integrals at the boundary. Assume that $C_1, C_2, C_3, C_4$ are four fixed positive real number and $\epsilon$ is a small enough positive real number, we have the following seven different cases.
\begin{itemize}
\item[(a)] If $\vert \Delta_1(a) \rvert =C_1$, $\vert \Delta_2(a) \rvert < \epsilon$, $\vert \Delta_3(a) \rvert < \epsilon$ and $\vert \Delta_4(a) \rvert= C_4$, then we can write
$$ I_{f,\psi}(a)= K_e^{w_1} \star \omega_{w_1}= \sum_{a=bc} K_e^{w_1}(b)\omega_{w_1}(c).$$
\item[(b)] If $\vert \Delta_1(a) \rvert < \epsilon$, $\vert \Delta_2(a) \rvert =C_2$, $\vert \Delta_3(a) \rvert < \epsilon$ and $\vert \Delta_4(a) \rvert = C_4$, then we can write
$$ I_{f,\psi}(a)= K_e^{w_2} \star \omega_{w_2}= \sum_{a=bc} K_e^{w_2}(b)\omega_{w_2}(c).$$
\item[(c)] If $\vert \Delta_1(a) \rvert < \epsilon$, $\vert \Delta_2(a) \rvert < \epsilon$, $\vert \Delta_3(a) \rvert = C_3$ and $\vert \Delta_4(a) \rvert = C_4$, then we can write
$$ I_{f,\psi}(a)= K_e^{w_3} \star \omega_{w_3}= \sum_{a=bc} K_e^{w_3}(b)\omega_{w_3}(c).$$
\item[(d)] If $\vert \Delta_1(a) \rvert =C_1$, $\vert \Delta_2(a) \rvert < \epsilon$, $\vert \Delta_3(a) \rvert =C_3$ and $\vert \Delta_4(a) \rvert = C_4$, then we can write
$$ I_{f,\psi}(a)= K_e^{w_4} \star \omega_{w_4}= \sum_{a=bc} K_e^{w_4}(b)\omega_{w_4}(c).$$
\item[(e)] If $\vert \Delta_1(a) \rvert =C_1$, $\vert \Delta_2(a) \rvert =C_2$, $\vert \Delta_3(a) \rvert < \epsilon$ and $\vert \Delta_4(a) \rvert = C_4$, then we can write
$$ I_{f,\psi}(a)= K_e^{w_5} \star \omega_{w_5}= \sum_{a=bc} K_e^{w_5}(b)\omega_{w_5}(c).$$
\item[(f)] If $\vert \Delta_1(a) \rvert < \epsilon$, $\vert \Delta_2(a) \rvert = C_2$, $\vert \Delta_3(a) \rvert =C_3$ and $\vert \Delta_4(a) \rvert = C_4$, then we can write
$$ I_{f,\psi}(a)= K_e^{w_6} \star \omega_{w_6}= \sum_{a=bc} K_e^{w_6}(b)\omega_{w_6}(c).$$
\item[(g)] If $\vert \Delta_1(a) \rvert <\epsilon$, $\vert \Delta_2(a) \rvert < \epsilon$, $\vert \Delta_3(a) \rvert < \epsilon$ and $\vert \Delta_4(a) \rvert = C_4$, then we can write
$$ I_{f,\psi}(a)= K_e^{w_{G_4}} \star \omega_{w_{G_4}}= \sum_{a=bc} K_e^{w_{G_4}}(b)\omega_{w_{G_4}}(c).$$
\end{itemize}
\end{rmk}

\begin{rmk}
An analogy of above theorem in $\GL(3)$ case can be found in \cite[Theorem 4.2]{Ba04}.
\end{rmk}

Now we come back to the general case. In order to prove Conjecture \ref{conj1}, from above Theorem \ref{germ} of Jacquet-Ye, it is sufficient to study the (relative) Shalika germs $K_e^{w_i}$ ($i=1,2,\cdots,2^n-3,2^n-2$) and $K_e^{w_{G_n}}$.

In other words, Conjecture \ref{conj1} can be reduced to the proof of following Conjecture on the boundness of relative Shalika germs:

\begin{conj} \label{con}
The absolute value $\left \vert K_e^{w_i} \Delta^{\frac{1}{2}-\delta} \right \rvert$ ($i=1,2,\cdots,2^n-3,2^n-2$) and $\left \vert K_e^{w_{G_n}} \Delta^{\frac{1}{2}-\delta} \right \rvert$ are bounded on $A_e^{w_i}$ (respectively, $A_e^{w_{G_n}}$) for some given $\delta>0$.
\end{conj}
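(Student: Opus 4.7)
The plan is to derive the conjectured bound from a non-trivial upper estimate for generalized $\GL_n(\BQ_p)$ Kloosterman sums, exactly the one promised in the introduction as Theorem \ref{thm: w_n}. The first step is to identify each Shalika germ $K_e^{w}$ with an explicit Kloosterman-type orbital integral. By the construction of Jacquet--Ye \cite{JY96,JY99}, the germs $K_e^{w}$ are intrinsic to the geometry of $N \times N$-orbits and can therefore be computed by choosing any convenient test function $f$. Taking $f_0 = \mathbf{1}_K$ and unfolding $I(w_{G_n} \cdot , f_0)$ along the Bruhat decomposition in the spirit of \cite{DR98,Ste87}, one obtains, for $a \in A_e^{w_{G_n}}$, an expression of $K_e^{w_{G_n}}(a)$ (up to elementary $\Delta$-factors) in terms of the $\GL_n$ generalized Kloosterman sum attached to $w_{G_n}$ in Stevens' sense.

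The analytic heart is then to prove a non-trivial bound for these Kloosterman sums. Following Stevens' method, one stratifies the underlying unipotent variables by their $p$-adic valuations, applies Fourier orthogonality on each stratum, and estimates the remaining exponential sums by Weil / $p$-adic stationary phase arguments. The target is an inequality of the shape
\[
\vert \mathrm{Kl}(a) \rvert \leq C \cdot p^{\theta \cdot v(a)}, \qquad v(a) := -\log_{p} \Delta(a),
\]
with exponent $\theta$ \emph{strictly less than} $1/2$. Translating back via the identification above gives $\vert K_e^{w_{G_n}}(a) \rvert \leq C' \cdot \Delta(a)^{-1/2+\delta}$ on $A_e^{w_{G_n}}$ for $\delta = 1/2 - \theta > 0$, which is precisely the case $i = w_{G_n}$ of the conjecture. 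For the intermediate relevant Weyl elements $w_i = w_M$ with $M$ a standard Levi of type $(n_1,\ldots,n_r)$, the germ $K_e^{w_i}$ factors as a $\star$-convolution product of germs $K_e^{w_{G_{n_j}}}$ for the longest Weyl elements on each block $\GL(n_j)$; applying the same bound inductively on each factor, with $n_j < n$, and multiplying the resulting $\Delta_j^{-1/2+\delta_j}$-estimates yields the claimed global bound on $A_e^{w_i}$ with $\delta = \min_j \delta_j > 0$.

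The main obstacle is achieving the strict inequality $\theta < 1/2$ uniformly in $n$. A naive term-by-term estimate of the Kloosterman sum (bounding each character-sum factor by its modulus) gives exactly $\theta = 1/2$, the ``square-root threshold'', which is just insufficient: the resulting power of $\Delta$ would barely fail to be locally integrable. Extracting real cancellation requires a careful combinatorial partition of the unipotent variables according to the valuations of the relevant sub-minors, together with Weil-type bounds on each piece, whose combinatorial complexity grows rapidly with $n$. This is the technical heart of Sections 4--6 below, and the refined analysis for $n = 4$ in the Appendix suggests that the crude value of $\delta$ produced by the general method is far from optimal --- though any positive $\delta$ already suffices for the local integrability of the Bessel functions $j_{\pi,\psi}$.
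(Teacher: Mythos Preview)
Your proposal is correct and follows essentially the same route as the paper: reduce the intermediate Weyl elements $w_i$ to the long-element case on each Levi block via the inductive structure of the Jacquet--Ye germs, then identify $K_e^{w_{G_n}}$ with a local Kloosterman sum and beat the square-root threshold by Stevens' stratification combined with (twisted) Weil bounds. Two small technical corrections: the factorization $K_e^{w_i}(a)=\prod_j K_{e_j}^{w_{G_{n_j}}}(a_j)$ is an ordinary product over the Levi blocks, not a $\star$-convolution (the latter is the operation pairing germs with the compactly supported $\omega_{w'}$ in the expansion of $I(w\cdot,f)$); and the test function realizing the germ explicitly is the (normalized) characteristic function of a coset $w_{G_n}K_{m'}$ of a principal congruence subgroup rather than $\mathbf{1}_K$ itself, which is what makes the identification with Stevens' Kloosterman sum clean.
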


\begin{conj} \label{conj2}
The absolute value $\left \vert K_e^{w_{G_n}} \Delta^{\frac{1}{2}-\delta_n} \right \rvert$ is bounded on $A_{e}^{w_{G_n}}$ for some given $\delta_n>0$.
\end{conj}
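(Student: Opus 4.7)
The plan is to deduce Conjecture \ref{conj2} from a non-trivial power-saving upper bound on the generalized $\GL_n(\BQ_p)$ Kloosterman sum attached to the longest Weyl element $w_{G_n}$, exactly of the type produced by G.~Stevens' method (see Theorem \ref{thm: w_n} in Section 5).

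First, I would isolate the germ $K_e^{w_{G_n}}$ on the most degenerate stratum of $T$. By Theorem \ref{germ}, for any $f \in C_c^\infty(G_n)$ one has
\[
I_{f,\psi}(a) \;=\; \sum_{w' \in R_{G_n}} (K_e^{w'} \star \omega_{w'})(a).
\]
Extending Remark \ref{gl4} from the $\GL(4)$ case to general $n$, on the subset of $A_e^{w_{G_n}}$ where the minors $|\Delta_1(a)|,\dots,|\Delta_{n-1}(a)|$ are all small, every term with $w' \ne w_{G_n}$ vanishes, since the supports of the $\omega_{w'}$ are compact in $A_{w'}$ and are therefore bounded away from the deeper stratum $A_{e}^{w_{G_n}}$. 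Choosing $f$ so that $\omega_{w_{G_n}}$ is a suitable bump function near the identity of $A_{w_{G_n}}$, the convolution $K_e^{w_{G_n}}\star\omega_{w_{G_n}}$ collapses (up to a bounded multiplicative factor) to a pointwise evaluation, allowing $K_e^{w_{G_n}}(a)$ to be recovered from $I_{f,\psi}(a)$.

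Second, I would unfold
\[
I_{f,\psi}(a) \;=\; \int_{N \times N} f(n_1^t a n_2)\,\psi^{-1}(n_1 n_2)\,dn_1\,dn_2
\]
with $f$ taken to be the characteristic function of a small Iwahori-type bi-$N$-invariant compact set. Using the Bruhat decomposition of $n_1^t a n_2$ into $\bar N T N$-components and a straightforward change of variables converts this integral, for $a$ in the $w_{G_n}$-stratum, into the generalized $\GL_n(\BQ_p)$ Kloosterman sum $\mathrm{Kl}_{w_{G_n}}(a;\psi)$ whose formal definition is given in Section 4. I would then apply Theorem \ref{thm: w_n} of Section 5 to conclude
\[
\bigl|\mathrm{Kl}_{w_{G_n}}(a;\psi)\bigr| \;\le\; C \cdot \Delta^{-\tfrac{1}{2}+\delta_n}(a)
\]
for some fixed $\delta_n>0$. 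Combining the two reductions yields $|K_e^{w_{G_n}}(a)\,\Delta^{\frac{1}{2}-\delta_n}(a)| \le C'$ on $A_e^{w_{G_n}}$, which is the statement of Conjecture \ref{conj2}.

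The main obstacle is the non-trivial Kloosterman estimate itself. The naive counting bound already gives precisely the critical exponent $\Delta^{-1/2}$, so there is no room to lose anywhere; one has to extract a genuine power saving. Adapting Stevens' method, which originally treats $\GL(2)$, to the $\GL_n$ longest Weyl element $w_{G_n}$ forces a decomposition of the summation domain along a combinatorially richer stratification of the torus, together with an analysis of Gauss-sum type cancellations on each stratum. This is the technical heart of the argument and occupies all of Section 5; once that estimate is in hand, the reduction outlined above is essentially formal via Theorem \ref{germ}.
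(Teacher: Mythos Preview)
Your proposal is correct and follows essentially the same strategy as the paper: reduce the germ to a Kloosterman sum and then invoke the power-saving bound of Theorem \ref{thm: w_n}. The one notable difference is in your first reduction step. You propose to isolate $K_e^{w_{G_n}}$ by choosing $f$ cleverly so that all $\omega_{w'}$ with $w'\neq w_{G_n}$ drop out and $\omega_{w_{G_n}}$ becomes a bump function; this is correct in spirit but a bit indirect, and the existence of such an $f$ is not immediate from Theorem \ref{germ} alone. The paper bypasses this entirely by using the explicit formula from Section 3.4 (coming from Jacquet--Ye), which already identifies $K_e^{w_{G_n}}(\alpha)$ with the orbital integral $I(\alpha,\Phi)$ for the concrete test function $\Phi=p^{\frac{n(n-1)}{2}m'}\cdot 1_{w_{G_n}K_{m'}}$, and hence (via the Remark in Section 4.1) with $p^{-\frac{n(n-1)}{2}m}\cdot Kl_p(\psi_p^{-1};c,w_{G_n})$. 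Once that identification is in hand, the computation $\Delta(c)=p^{-2(a_1+\cdots+a_{n-1})}$ together with Theorem \ref{thm: w_n} gives the bound directly, for any $\delta_n<\frac{1}{8n^2-36n+44}$. Your route would work, but the explicit formula makes the argument shorter and removes the need to justify the ``collapse'' of the convolution.
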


Although Conjecture \ref{conj2} is a part of Conjecture \ref{con}, we will see that Conjecture \ref{conj2} is exactly equivalent to Conjecture \ref{con}.

\begin{thm}
Assume that Conjecture \ref{conj2} holds for general $n$, then Conjecture \ref{con} is true in general.
\end{thm}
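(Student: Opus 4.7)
Every relevant Weyl element $w \in R_{G_n}$ has the block form $w = \diag(w_{G_{n_1}}, \ldots, w_{G_{n_r}})$ for some composition $n = n_1 + \cdots + n_r$, and $w$ is the longest Weyl element of the standard Levi $M_w \cong \GL(n_1) \times \cdots \times \GL(n_r)$. The plan is to reduce the desired bound on the $G_n$-germ $K_e^w$ over $A_e^w$ to the bounds furnished by Conjecture~\ref{conj2} applied to the longest-element germs $K_e^{w_{G_{n_i}}}$ inside each smaller $\GL(n_i)$, and then to combine them multiplicatively.

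First I would describe $A_e^w$ and $\Delta$ block-wise. Since the non-vanishing principal minors of $w$ occur exactly at the sizes $s_i := n_1 + \cdots + n_i$ ($1 \le i \le r$), the set $A_e^w$ consists of diagonal matrices $a = \diag(a_1, \ldots, a_n)$ with $|\Delta_{s_i}(a)| = 1$ for every $i$; in particular $|\det a| = 1$. Using these constraints, a direct computation yields the exact identity
\[
\Delta(a) \;=\; \prod_{i=1}^r \Delta_{\GL(n_i)}\bigl(a^{(i)}\bigr),
\]
where $a^{(i)} := \diag(a_{s_{i-1}+1}, \ldots, a_{s_i})$ is viewed as a diagonal element of $\GL(n_i)$ and $\Delta_{\GL(n_i)}$ is the analogue of $\Delta$ for that smaller group.

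The central step is to establish the parallel factorization of the germ itself:
\[
K_e^w(a) \;=\; \prod_{i=1}^r K_e^{w_{G_{n_i}},\,\GL(n_i)}\bigl(a^{(i)}\bigr).
\]
I would prove this by parabolic descent along $P_w = M_w U_w$. Testing the $G_n$-germ expansion against product-type functions supported on the open cell $\bar U_w M_w U_w$, namely $f(\bar u\, m\, u) = \chi_{\bar U_w}(\bar u)\chi_{U_w}(u)\prod_i f_i(m_i)$ with $f_i \in C_c^\infty(\GL(n_i))$, the integrations over $U_w$ and $\bar U_w$ against $\psi^{-1}$ converge absolutely (by the same stable-integral arguments as in \cite{JY96,JY99}) and produce a factor that depends on $f$ but is bounded in $a \in A_e^w$. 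The residual orbital integral over $N_{M_w} \times N_{M_w}$ then splits as the product $\prod_i I^{\GL(n_i)}(a^{(i)}, f_i)$. Expanding each $\GL(n_i)$-factor via its own Shalika germ expansion and comparing with the $G_n$-expansion $I^{G_n}(a, f) = \sum_{w'} K_e^{w'} \star \omega_{w'}(a)$, the uniqueness of the Jacquet--Ye germs together with their local constancy pins down the factorization on the generic part of $A_e^w$ and extends it throughout. Combining this with Conjecture~\ref{conj2} applied to each $n_i \le n$ and setting $\delta := \min_{n' \le n} \delta_{n'} > 0$, the block-wise product of the bounds on $\bigl|K_e^{w_{G_{n_i}}} \cdot \Delta_{\GL(n_i)}^{1/2 - \delta_{n_i}}\bigr|$ delivers the uniform estimate on $\bigl|K_e^w \cdot \Delta^{1/2 - \delta}\bigr|$ over $A_e^w$. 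Since there are only finitely many $w \in R_{G_n}$, a single $\delta$ works throughout Conjecture~\ref{con}.

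The main obstacle is the germ factorization step. While the orbital integral visibly decomposes along the parabolic $P_w$, transferring this decomposition to the germs $K_e^w$ themselves requires careful control of the $U_w$-integration against $\psi^{-1}$ (to ensure it contributes only a bounded factor in $a$, rather than an additional singularity), as well as a precise application of the uniqueness of the Jacquet--Ye expansion tested against a sufficiently rich family of product-type functions. A judicious choice of the exhausting filtration $\CN$ of $N$ compatible with the Levi decomposition is likely to be the essential technical input.
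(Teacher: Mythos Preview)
Your overall strategy coincides with the paper's: factor $A_e^w$, $\Delta$, and $K_e^w$ block-wise along the Levi $M_w \cong \prod_i \GL(n_i)$, then apply Conjecture~\ref{conj2} to each factor and take $\delta$ to be (essentially) the minimum of the $\delta_{n_i}$.

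The only real difference is in how the germ factorization $K_e^{w}(a) = \prod_i K_{e}^{w_{G_{n_i}},\GL(n_i)}(a^{(i)})$ is obtained. You sketch a direct proof via parabolic descent against product-type test functions and uniqueness of the Jacquet--Ye expansion, and you flag this as the main obstacle. The paper bypasses this entirely: it simply cites \cite[Section~2]{JY99}, where the inductive (multiplicative) structure of the relative Shalika germs is established as part of the general theory. So the ``central step'' you worry about is already a theorem in the literature, and no new argument is needed. Your parabolic-descent sketch is headed in the right direction, but it is unnecessary here and, as written, leaves open exactly the convergence and uniqueness issues you yourself identify; invoking \cite{JY99} closes the gap cleanly. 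A minor point: the paper takes $\delta = \tfrac{1}{2}\min\{\delta_2,\dots,\delta_n\}$ rather than the minimum itself, but this is inessential.
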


\begin{proof}
It is sufficient to prove the following result:

If $\left \vert K_e^{w_{G_n}} \Delta^{\frac{1}{2}-\delta_n} \right \rvert$ is bounded on $A_{e}^{w_{G_n}}$ for some given $\delta_n>0$ and every positive integer $n \geq 2$, then $\left \vert K_e^{w_i} \Delta^{\frac{1}{2}-\delta} \right \rvert$ ($i=1,2,\cdots,2^n-3,2^n-2$) is bounded on $A_e^{w_i}$ for some given $\delta>0$.

From the discussion of Section 2 in \cite{JY99}, the system of (relative) Shalika germs is inductive. More precisely,  if $w' \neq w_{G_n}$, then for every $a$ in $A_e^{w'}$, we can write $a=\diag (a_1,a_2,\cdots,a_s)$ with $a_i \in A_e^{w_i'}$ and $s \geq 2$. By the classification of relevant Weyl elements, it is known that each $w_i'$ is the longest Weyl element of some small $\GL(n_i)$ and $\sum_{i=1}^s n_i=n$. By the definition of the subset $A_e^{w_i'}$, for each $a_i \in A_e^{w_i'}$, we have $\vert \det a_i \rvert= \vert \det w_i' \rvert=1$. Therefore, we have $A_e^{w'}= \prod A_{e_i}^{w_i'}$ and the equation of the (relative) Shalika germs is as follows: $K_e^{w'}(a)= \prod_{i=1}^s K_{e_i}^{w_i'} (a_i)$. Now we pick $\delta:= \frac{1}{2} \cdot \min \{\delta_2,\delta_3,\cdots,\delta_n \}$. Hence by the equation $K_e^{w'}(a)= \prod_{i=1}^s K_{e_i}^{w_i'} (a_i)$ and $\Delta^{\frac{1}{2}-\delta}(a)= \prod_{i=1}^s \Delta^{\frac{1}{2}-\delta}(a_i)$ (Note that $\vert \det a_i \rvert= \vert \det w_i' \rvert=1$), we prove the reduction.
\end{proof}

\begin{rmk}
We give a remark for the above theorem in $\GL(4)$ case. Since $w_i \neq w_{G_4}$ ($i=1,2,3,4,5,6$), we can reduce the calculation of the (relative) Shalika germs $K_e^{w_i}$ to the $\GL(3)$ or $\GL(2)$ case. Explicitly, we can reduce the (relative) Shalika germs $K_e^{w_1}$ and $K_e^{w_3}$ to $\GL(3)$ (relative) Shalika germs. The (relative) Shalika germs $K_e^{w_i}$ ($i=4,5,6$) can be reduced to $\GL(2)$ (relative) Shalika germs. Finally, the (relative) Shalika germ $K_e^{w_2}$ can be reduced to the product of two $\GL(2)$ (relative) Shalika germs. In conclusion, we can reduce these $\GL(4)$ (relative) Shalika germs to lower rank cases. Therefore by the results in \cite{Ba04} \cite{JY96} \cite{JY99}, we can prove the first part of above Conjecture \ref{con} by setting $\delta=\frac{1}{8}$.
\end{rmk}

The proof of above Conjecture \ref{conj2} is the key ingredient of the proof of local integrability of Bessel functions. We will prove above Conjecture \ref{conj2} in the following Section 5 and 6 by using G. Stevens' idea to estimate the corresponding Kloosterman integral (sum). In order to estimate the Kloosterman integral, we need to give a fine stratification of Kloosterman sets and use the Weil bound to prove a nontrivial bound for the Kloosterman sum attached to the longest Weyl element $w_{G_n}$.

\subsection{Reduction Step 4: An explicit formula for $K_e^{w_{G_n}}$}

By \cite[Section 2]{JY99}, we give the explicit formula for $K_e^{w_{G_n}}$. The relation between $K_e^{w_{G_n}}$ and local Kloosterman sums for $G_n$ is given in the next section.

For the $p$-adic local field $F=\BQ_p$, let $\BZ_p$ be the ring of integers and $p \BZ_p$ be the (unique) maximal ideal. Let $p$ be the cardinality of the residue field $ \BZ_p / p \BZ_p$. Let $K_{m'}$ be the principal congruence subgroup of maximal compact open subgroup of $K$, i.e. we have
$$ K_{m'}= I_n + M_{n \times n}( p^{m'} \BZ_p ),$$
where $M_{n \times n}$ is the $n \times n$ matrix.
We fix the positive integer $m'$. Here $m'$ should be chosen large enough such that $a=1$ when $a^n=1$ and $a \in 1 + p^{m'} \BZ_p $. Moreover, without loss of generality, we can further assume that $m'>>n$.

For $\alpha= \diag(a_1,a_2,\cdots,a_n)$ where $\prod_{i=1}^n a_i=(-1)^{\frac{(n+1)(n+2)}{2}+1}$ and $\vert a_1 \rvert \leq q^{-k},\vert a_1 a_2 \rvert \leq q^{-k},\cdots, \cdots,$ $ \vert a_1 a_2 \cdots a_i \rvert  \leq q^{-k},\cdots, \cdots, \vert a_1 a_2 \cdots a_{n-1} \rvert \leq q^{-k}$ for some large enough positive integer $k$ ($k>>m'$), we have
$$ K_{e}^{w_{G_n}}(\alpha)= I(\alpha, \Phi)= \int_{N \times N} \Phi(n_1^t\alpha n_2)\psi^{-1}(n_1 n_2)dn_1 dn_2.$$
We recall that the measure here is the standard normalized Haar measure such that the volume of the maximal compact subgroup $\GL_n( \BZ_p)$ and the subgroup $N \cap \GL_n(\BZ_p)= N(\BZ_p)$ both equal to one. Moreover, we have $\Phi(x)=q^{\frac{n(n-1)}{2}m'} \cdot 1_{w_{G_n} K_{m'}}$, where $1_{w_{G_n} K_{m'}}$ is the characteristic function of the compact subset $w_{G_n} K_{m'}$. It is easy to see that $\Phi \in C_c^{\infty}(G_n)$.


\section{Summary of Stevens' results}

\subsection{Definition of $\GL(n)$ Kloosterman sums}

We will mainly use the method in \cite{Ste87} to estimate the Kloosterman integrals for $\GL(n)$. In this section, we will briefly review the paper \cite{Ste87}.

The classical ($\GL(2)$) Kloosterman sum is given by
$$  S(m,n;c) = \sum_{d,\bar{d} \mod c} e\left(\frac{md+n\bar{d}}{c}\right) $$
where $d\bar{d}\equiv1\pmod{c}$ and $e(x)=e^{2\pi i x}$,
which arises when one computes the Fourier expansion of the $\GL(2)$ Poincar\'{e} series.

Now we define the $\GL(n)$ Kloosterman sums. We first recall some notations which are given in above Section 3. Let $F=\BQ_p$ be a non-archimedean local field, where $p$ is a prime number. 
Let $\BZ_p$ be the ring of integers and $p \BZ_p$ be the (unique) maximal ideal. Let $p$ be the cardinality of the residue field $\BZ_p / p \BZ_p$. Let $K_{m'} = I_n + M_{n \times n}( p^{m'} \BZ_p )$ be the principal congruence subgroup of maximal compact open subgroup of $K=\GL_n(\BZ_p)$.

We let $\psi_p$ be a nontrivial additive character from $N(\BQ_p)$ to $\BC^{\times}$ which is trivial on $N(p^m \BZ_p):=N(\BQ_p) \bigcap K_m$, where $K_m$ is the principal congruence subgroup $1+M_{n \times n}(p^m \BZ_p)$. Here $m$ is a fixed large enough positive integer with $m>2m'>>n$. The character $\psi_p$ has the following form:
$$
  \psi_p \left( \begin{pmatrix}
                        1 & x_1 & \cdots & * & * \\
                       0 & 1 & x_2 & \cdots & * \\
                       \cdots & \cdots & \cdots & \cdots & \cdots \\
                        0 & 0 & \cdots & 1 & x_{n-1} \\
                        0 & 0 & \cdots & 0 & 1
                      \end{pmatrix} \right)
  = \xi \left( \sum_{i=1}^{n-1}  x_i \right),
$$
where $\xi$ is the standard nontrivial additive character of $\BQ_p$ as in \cite[Section 1]{Ste87}. 

We set $c= \diag(a_1,a_2,\cdots,a_n) \in T$.
Now following \cite[Section 2]{Ste87}, we define
\begin{equation*}
  C(w_{G_n} c)  := N(\mathbb{Q}_p)w_{G_n} cN(\mathbb{Q}_p)\cap K_m,
\end{equation*}
and
\begin{equation*}
  X(w_{G_n} c)  := N(p^m \mathbb{Z}_p)\backslash C(w_{G_n} c)/ N(p^m \mathbb{Z}_p).
\end{equation*}
By the Bruhat decomposition we have natural maps
\begin{equation*}
  u:  X(w_{G_n} c)\rightarrow N(p^m\mathbb{Z}_p)\bs N(\mathbb{Q}_p),  \quad
  u':  X(w_{G_n} c)\rightarrow N(\mathbb{Q}_p)/ N(p^m \mathbb{Z}_p).
\end{equation*}
defined by the relation $x=u(x)w_{G_n} c u'(x)$ for $x\in X(w_{G_n} c)$.

Now the \emph{local Kloosterman sum for $\GL_n(\BQ_p)$} is defined as follows:
\[
    Kl_p(\psi_p ;c,w_{G_n}) := \sum_{x\in X(w_{G_n} c)}\psi_p(u(x))\cdot \psi_p(u'(x)).
\]

\begin{rmk}
Using the notation in Section 3. If we further assume that $\psi_p=\psi_p'$, in this special case we have the following equation:
$$ Kl_p(\psi_p^{-1};c,w_{G_n}) = p^{n(n-1)m} \cdot J_{1_{K_m},\psi_p}(w_{G_n} c) = p^{n(n-1)m} \cdot I_{1_{w_{G_n} K_m},\psi_p}(c).$$
By Section 3.4, we have the explicit formula as follows:
$$K_{e, \psi_p}^{w_{G_n}}(c)= p^{\frac{n(n-1)}{2}m} \cdot I_{1_{w_{G_n} K_m},\psi_p}(c).$$
Hence we see that
$$ K_{e, \psi_p}^{w_{G_n}}(c)= p^{- \frac{n(n-1)}{2}m} \cdot Kl_p(\psi_p^{-1};c,w_{G_n}).$$
So the local Kloosterman sum is exact a special case of the orbital (Kloosterman) integral when the test function is the multiplication of the characteristic function for the congruence subgroup $K_m$ and a positive constant $p^{n(n-1)m}$. Now in order to prove the non-trivial bound for the relative Shalika germs and orbital integrals, it suffices to give a non-trivial bound for the local Kloosterman sum.
\end{rmk}

\subsection{Decomposition of $\GL(n)$ Kloosterman sums}

We need an orbit decomposition of $X(\tau)$ where $\tau= w_{G_n} c$. Let $t' \in T(1+ p^m \BZ_p)$ and set $s:= \tau^{-1}t' \tau \in T(1+ p^m \BZ_p)$. If $\gamma \in C(\tau)$, we can write $\gamma = u \cdot \tau \cdot u'$ with $u \in N(\BQ_p)$ and $u' \in N(\BQ_p)$. We also have
\[
t' \gamma s^{-1}= (t'ut'^{-1}) \cdot \tau \cdot (su' s^{-1}) \in N(\BQ_p) \tau N(\BQ_p) \cap K_m = C(\tau).
\]
Since conjugation by $t$ and $s$ preserves $N(p^m \BZ_p)$, we know that the map
\[
T(1+ p^m \BZ_p) \times C(\tau) \rightarrow C(\tau),\;\quad (t,\gamma) \rightarrow  t \gamma s^{-1}
\]
descends to an action of $T(1+ p^m \BZ_p)$ on $X(\tau)$ as follows
$$ T(1+p^m \BZ_p) \times X(\tau) \rightarrow X(\tau),\;\;\;t,\;x \rightarrow t * x.$$
For characters $\psi_p$, the decomposition of $X(\tau)$ into $T(1+ p^m \BZ_p)$-orbits leads to the following decomposition of the Kloosterman sums:
$$ Kl_p(\psi_p;c,w_{G_n}):= \sum_{ x \in T(1+ p^m \BZ_p) \backslash X(\tau)} \sum_{y \in T(1+ p^m \BZ_p) * x} \psi_p(u(y)) \cdot \psi_p(u'(y)).$$
Here $T(1+ p^m \BZ_p) \backslash  X(\tau)$ is a set of representations $x \in X(\tau)$ for the $T(1+ p^m \BZ_p)$-orbits and $T(1+ p^m \BZ_p) * x$ is the orbit through $x$.


Now we want to write down the above formula explicitly. We need the following notations.

The roots of the standard torus $T$
in $\GL(n)$ are the characters
$\underline{\lambda}_{ij}: T\rightarrow \GL(1)$ given by
\begin{equation}\label{eqn: roots of T}
  \underline{\lambda}_{ij}\left(\begin{pmatrix}  t_1 &&&& \\  &t_2&&&  \\ && \cdots && \\ &&&t_{n-1}& \\&&&&t_n \end{pmatrix} \right)
  = t_it_j^{-1}, \quad 1\leq i,j \leq n,\ i\ne j.
\end{equation}
For $w\in W_{G_n}$, we recall that $w(j)$, $j\in\{1,2,3,\cdots,n\}$ is given by the formula
\[
  w\cdot e_j =  e_{w(j)},
\]
where $e_1,e_2,e_3,\cdots,e_n$ is the standard basis of column vectors.
Let $\Delta = \{\underline{\lambda}_{i,i+1}: 1\leq i \leq n-1 \}$ be
the root basis associated to the standard unipotent subgroup $U$.
Let
\begin{equation}\label{eqn: Delta_w}
  \Delta_w := \{\underline{\lambda}_{i,i+1}:w(i+1)<w(i)\}. 
\end{equation}
We write
\[
  u(x)= \begin{pmatrix} 1 & u_1 & * &\cdots & *\\  & 1 & u_2 & \cdots & *\\ && \cdots & \cdots & \cdots \\ &&& 1 & u_{n-1} \\  &&&&1 \end{pmatrix},
\quad
  u'(x)= \begin{pmatrix} 1 & u_1' & * & \cdots & *\\  & 1 & u_2' & \cdots & *\\ && \cdots & \cdots & \cdots \\ &&& 1 & u_{n-1}' \\  &&&&1 \end{pmatrix}.
\]
For $1\leq i \leq n-1$ we define
$\kappa_i: X(w_{G_n} c)\rightarrow \mathbb{Q}_p/ p^m \mathbb{Z}_p$, and
if also $w(i+1)<w(i)$ then we define $\kappa_i':X(w_{G_n} c)\rightarrow \mathbb{Q}_p/ p^m \mathbb{Z}_p$ by
\begin{equation}\label{eqn: kappa}
  \kappa_i(x) = u_i, \quad \kappa_i'(x) = u_i'.
\end{equation}
For $t\in T(1+ p^m \mathbb{Z}_p)$ we then have
\begin{equation}\label{eqn: kappa t*x}
  \kappa_i(t*x) = \underline{\lambda}_{i,i+1} \kappa_i(x), \quad
  \kappa_i'(t*x) = \underline{\lambda}_{w(i),w(i+1)} \kappa_i'(x).
\end{equation}
Since $$u'(t*x)=s \cdot u'(x) \cdot s^{-1}$$ for $t \in T(1+ p^m \mathbb{Z}_p)$ and $s= \tau^{-1}t \tau \in T(1+ p^m \mathbb{Z}_p)$, we see that the orbits in $X(w_{G_n}c)=X(\tau)$ correspond to $T(1+ p^m \mathbb{Z}_p)$-conjugacy classes in $N(\mathbb{Q}_p)/ N(p^m \mathbb{Z}_p)$.

The following definition is an analogy of Definition 4.9 in \cite{Ste87}.

\begin{defn}\label{defn: stevens} \cite[Definition 4.9]{Ste87}
  \begin{itemize}
    \item [(a)] For $\ell>0$ and $w\in W_{G_n}$, we let
            \begin{equation*}
            \begin{aligned}
              A_{w}(\ell) & := (\mathbb{Z}/p^{\ell+m} \mathbb{Z})^\Delta \times (\mathbb{Z}/p^{\ell+m}\mathbb{Z})^{\Delta_w} \\
             & = \prod_{i=1}^{n-1}(\mathbb{Z}/p^{\ell+m} \mathbb{Z}) \times \prod_{i=1\atop w(i+1)<w(i)}^{n-1} (\mathbb{Z}/p^{\ell+m} \mathbb{Z}).
            \end{aligned}
            \end{equation*}


            A typical element of $A_{w_{G_n}}(\ell)$ will be denoted by
            $$\underline{\lambda}\times\underline{\lambda}'
            = (\lambda_i)_{i=1,2,3,\cdots,n-1}\times (\lambda_i')_{i=1,2,3,\cdots,n-1 \atop w(i+1)<w(i)}$$.

          If $w=w_{G_n}$ which is the longest Weyl element, we will simply have $A_{w_{G_n}}(\ell)= (\mathbb{Z}/p^{\ell+m} \mathbb{Z})^{2(n-1)}$.

    \item [(b)] Let
           \begin{equation*}
           \begin{aligned}
              V_{w}(\ell) := & \Big\{ \underline{\lambda}\times\underline{\lambda}'\in A_{w}(\ell)
              : \lambda_i\in (\mathbb{Z}/p^{\ell+m} \mathbb{Z})^\times,   \\
          & \lambda_i \equiv 1 \mod p^m \; \textrm{and} \;\lambda_i'\cdot\prod_{w(i+1)\leq j<w(i)}\lambda_j=1  \Big\}.
           \end{aligned}
            \end{equation*}

         If $w=w_{G_n}$ which is the longest Weyl element, we will simply have
          \begin{equation*}
           \begin{aligned}
              V_{w_{G_n}}(\ell) := & \Big\{ \underline{\lambda}\times\underline{\lambda}'\in A_{w_{G_n}}(\ell)
              : \lambda_i\in (\mathbb{Z}/p^{\ell+m} \mathbb{Z})^\times,   \\
          & \lambda_i \equiv 1 \mod p^m \; \textrm{and} \;\lambda_i' \times \lambda_{n-i} = 1  \Big\}.
           \end{aligned}
            \end{equation*}
    \item [(c)] For a character $\theta:A_{w}(\ell)\rightarrow \mathbb{C}^\times$, we define
            \[
              S_w(\theta;\ell) := \sum_{v\in V_w(\ell)} \theta(v).
            \]
  \end{itemize}
\end{defn}

We fix
\begin{equation}\label{eqn: c}
  \widetilde{c}=\begin{pmatrix}
         p^{a_1}u_1 &  &  &  & \\
          & p^{a_2-a_1}u_2 &  &  &  \\
          &&\cdots&&&  \\
          & & & p^{a_{n-1}-a_{n-2}}u_{n-1} &  \\
          &  &  &  & p^{-a_{n-1}}u_n
       \end{pmatrix},
\end{equation}
where $u_i \in \BZ_p^{\times}$ for $i=1,2,3,\cdots,n$ and $u_1u_2u_3\cdots u_n=(-1)^{\frac{(n+1)(n+2)}{2}+1}$. Moreover, $a_i$ ($i=1,2,\cdots,n-1$) are positive integers which are larger than $m$.

The following lemma is an analogy of Theorem 4.10 in \cite{Ste87}.

\begin{lem}\label{lemma: stevens} \cite[Theorem 4.10]{Ste87}

  Let $\ell$ be large enough so that the matrix entries of
  $u(x)$, $u'(x)$ lie in $ p^{-\ell} \mathbb{Z}_p / p^m \mathbb{Z}_p$ for every $x\in X(w_{G_n} \widetilde{c})$ (For example, we can simply pick $\ell:= \max \{ a_1,a_2,\cdots,a_{n-1} \}$.).
  Let $\kappa_i(x),\kappa_i'(x)$ be as in \eqref{eqn: kappa} and
  define the character $\theta_x: A_{w_{G_n}}(\ell)\rightarrow \mathbb{C}^\times$ by
  \[
    \theta_x(\underline{\lambda}\times\underline{\lambda}')
    =\prod_{i=1}^{n-1}\xi(\lambda_i \kappa_i(x))\cdot \prod_{i=1\atop w(i+1)<w(i)}^{n-1}\xi(\lambda_i' \kappa_i'(x)).
  \]
  Let $N(x)$ denote the number of elements in the orbit through an element $x\in X(w_{G_n} \tilde{c})$.
  Then
  \[
    Kl_p(\psi_p;\widetilde{c},w_{G_n}) = \vert V_{w_{G_n}}(\ell) \rvert^{-1} \cdot \sum_{x\in T(1+p^m \mathbb{Z}_p)\bs X(w_{G_n} c)} N(x) S_{w_{G_n}}(\theta_x;\ell).
  \]
Here $\vert V_{w_{G_n}} (\ell) \rvert$ is the cardinality of the set $V_{w_{G_n}}(\ell)$.
\end{lem}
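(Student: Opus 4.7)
The plan is to follow Stevens' original argument in \cite[Theorem 4.10]{Ste87}: the Kloosterman sum has a built-in $T(1+p^m\BZ_p)$-symmetry, and the formula comes from an orbit--stabilizer count together with a change of variables from the acting torus to the parameter set $V_{w_{G_n}}(\ell)$. Specializing to the longest Weyl element $w_{G_n}$ simplifies several combinatorial aspects, since $w_{G_n}(i)=n+1-i$ implies $w_{G_n}(i+1)<w_{G_n}(i)$ for every $i$, so every simple root lies in $\Delta_{w_{G_n}}$.

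First, by the hypothesis on $\ell$, the action of $T(1+p^m\BZ_p)$ on the finite set $X(w_{G_n}\widetilde{c})$ factors through the finite quotient $\overline{T}:=T(1+p^m\BZ_p)/T(1+p^{m+\ell}\BZ_p)$ of order $p^{n\ell}$. Writing the Kloosterman sum as an iterated sum over $\overline{T}$-orbits and applying orbit--stabilizer, for each orbit representative $x$ we obtain
\[
\sum_{y\in\overline{T}*x}\psi_p(u(y))\psi_p(u'(y)) \;=\; \frac{N(x)}{|\overline{T}|}\sum_{t\in\overline{T}}\psi_p(u(t*x))\psi_p(u'(t*x)).
\]
Using the identities $u(t*x)=tu(x)t^{-1}$ and $u'(t*x)=su'(x)s^{-1}$ with $s=(w_{G_n}\widetilde{c})^{-1}t(w_{G_n}\widetilde{c})$, together with the transformation formula \eqref{eqn: kappa t*x}, the inner summand rewrites as $\theta_x(\underline{\lambda}(t),\underline{\lambda}'(t))$.

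Next I would analyze the map $\Phi\colon\overline{T}\to A_{w_{G_n}}(\ell)$ defined by $t\mapsto(\underline{\lambda}(t),\underline{\lambda}'(t))$. The coordinates $\lambda_i(t)=t_i/t_{i+1}$ automatically land in the subset of units of $\BZ/p^{m+\ell}\BZ$ congruent to $1\bmod p^m$. Because $w_{G_n}(i)=n+1-i$, the product condition defining $V_{w_{G_n}}(\ell)$ collapses to the single relation $\lambda_i'\lambda_{n-i}=1$, which holds identically since
\[
\lambda_i'(t)\;=\;\underline{\lambda}_{w_{G_n}(i),w_{G_n}(i+1)}(t)\;=\;t_{n+1-i}/t_{n-i}\;=\;\lambda_{n-i}(t)^{-1};
\]
so $\Phi$ lands in $V_{w_{G_n}}(\ell)$. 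Its kernel is exactly the image of the center $Z_n(1+p^m\BZ_p)$ in $\overline{T}$, a subgroup of order $p^{\ell}$, so $\Phi$ is surjective with fibres of size $p^\ell$ and $|V_{w_{G_n}}(\ell)|=p^{(n-1)\ell}$. Combining the two computations yields
\[
\sum_{y\in\overline{T}*x}\psi_p(u(y))\psi_p(u'(y)) \;=\; \frac{N(x)}{|\overline{T}|}\cdot\frac{|\overline{T}|}{|V_{w_{G_n}}(\ell)|}\,S_{w_{G_n}}(\theta_x;\ell) \;=\; \frac{N(x)}{|V_{w_{G_n}}(\ell)|}\,S_{w_{G_n}}(\theta_x;\ell),
\]
and summing over $T(1+p^m\BZ_p)$-orbit representatives in $X(w_{G_n}\widetilde{c})$ gives the asserted identity.

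The main obstacle is bookkeeping rather than conceptual: one must check that both the action of $T(1+p^m\BZ_p)$ on $X(w_{G_n}\widetilde{c})$ and the character $\theta_x$ descend to the stated finite quotients, so that orbit--stabilizer and the sum $S_{w_{G_n}}(\theta_x;\ell)$ are unambiguously defined. This is precisely where the hypothesis on $\ell$ (controlling the denominators of the matrix entries of $u(x)$ and $u'(x)$) enters, together with the fact that the additive character $\xi$ is trivial on $\BZ_p$. Once this is verified, the surjectivity of $\Phi$, the fibre count, and the simplification specific to $w=w_{G_n}$ are all short checks that assemble into the stated identity.
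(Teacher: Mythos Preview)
Your proposal is correct and follows essentially the same approach as the paper's proof: both exploit the $T(1+p^m\BZ_p)$-action on $X(w_{G_n}\widetilde{c})$ and the surjection onto $V_{w_{G_n}}(\ell)$ to convert each orbit sum into $N(x)\,|V_{w_{G_n}}(\ell)|^{-1}S_{w_{G_n}}(\theta_x;\ell)$. Your version is slightly more explicit about the orbit--stabilizer and fibre--counting bookkeeping, whereas the paper phrases the same computation as an averaging argument (showing the orbit sum is invariant under twisting by any element of $V_{w_{G_n}}(\ell)$ and then summing over $V_{w_{G_n}}(\ell)$), but the content is the same.
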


\begin{proof}
  The proof is the same as Theorem 4.10 in \cite{Ste87}. We can rewrite the Kloosterman sum as follows:
\begin{equation*}
\begin{aligned}
Kl_p(\psi_p;\widetilde{c},w) &= \sum_{ x \in T(1+ p^m \BZ_p) \backslash X(\tau)} \sum_{y \in T(1+ p^m \BZ_p) * x} \psi_p(u(y)) \cdot \psi_p(u'(y)) \\
&= \sum_{ x \in T(1+ p^m \BZ_p) \backslash X(\tau)} \sum_{y \in T(1+ p^m \BZ_p) * x} \prod_{i=1}^{n-1}\xi( \kappa_i(y))\cdot \prod_{i=1\atop w(i+1)<w(i)}^{n-1}\xi( \kappa_i'(y)).
\end{aligned}
\end{equation*}
We note that for every pair $(\underline{\lambda}, \underline{\lambda'}) \in V_w(\ell)$, we can find $t \in T(1+p^m \BZ_p)$ such that $\kappa_i(t*x)= \lambda_i \kappa_i(x)$ and $\kappa_j'(t*x)= \lambda_i' \kappa_j'(x)$ for $x \in X(\tau)$, where $1 \leq i,j \leq n$ and $w(j+1)<w(j)$. Hence we have the following equation (The summation for $y \in T(1+ p^m \BZ_p) * x$ runs over all the elements in the orbit through $x \in  X(w_{G_n} \widetilde{c})=X(\tau)$):
\begin{equation*}
\begin{aligned}
& \sum_{y \in T(1+ p^m \BZ_p) * x} \prod_{i=1}^{n-1}\xi( \kappa_i(y))\cdot \prod_{i=1\atop w(i+1)<w(i)}^{n-1}\xi( \kappa_i'(y)) \\
=& \sum_{y \in T(1+ p^m \BZ_p) * x}  \prod_{i=1}^{n-1}\xi( \kappa_i(t*y))\cdot \prod_{i=1\atop w(i+1)<w(i)}^{n-1}\xi( \kappa_i'(t*y)) \\
=& \sum_{y \in T(1+ p^m \BZ_p) * x}  \prod_{i=1}^{n-1}\xi(\lambda_i \kappa_i(y))\cdot \prod_{i=1\atop w(i+1)<w(i)}^{n-1}\xi(\lambda_i' \kappa_i'(y)).
\end{aligned}
\end{equation*}
Now we take the summation over the finite set $V_w(\ell)$. We recall that $N(x)$ is the number of elements in the orbit through an element $x \in  X(w_{G_n} \widetilde{c})$, which is given in the statement of Lemma \ref{lemma: stevens}. We have
\begin{equation*}
\begin{aligned}
& \vert V_w(\ell) \rvert \times Kl_p(\psi_p;\widetilde{c},w) \\
=& \sum_{ x \in T(1+ p^m \BZ_p) \backslash X(\tau)} \sum_{y \in T(1+ p^m \BZ_p) * x} \sum_{\lambda \times \lambda' \in V_w(\ell)} \prod_{i=1}^{n-1}\xi(\lambda_i \kappa_i(y))\cdot \prod_{i=1\atop w(i+1)<w(i)}^{n-1}\xi(\lambda_i' \kappa_i'(y)) \\
=& \sum_{ x \in T(1+ p^m \BZ_p) \backslash X(\tau)} N(x) \cdot \sum_{\lambda \times \lambda' \in V_w(\ell)} \prod_{i=1}^{n-1}\xi(\lambda_i \kappa_i(x))\cdot \prod_{i=1\atop w(i+1)<w(i)}^{n-1}\xi(\lambda_i' \kappa_i'(x)) \\
=& \sum_{x\in T(1+p^m \BZ_p)\backslash X(\tau)} N(x) \times  S_{w}(\theta_x;\ell).
\end{aligned}
\end{equation*}
The summation for $y \in T(1+ p^m \BZ_p) * x$ runs over all the elements in the orbit through $x \in  X(w_{G_n} \widetilde{c})$. We divide both sides by the cardinality $\vert V_w(\ell) \rvert$ to prove the statement.

In summary, we have the following equation:
\begin{equation*}
\begin{aligned}
& Kl_p(\psi_p;\widetilde{c},w) = \sum_{ x \in T(1+ p^m \BZ_p) \backslash X(\tau)} \sum_{y \in T(1+ p^m \BZ_p) * x} \psi_p(u(y)) \cdot \psi_p(u'(y)) \\
&= \sum_{ x \in T(1+ p^m \BZ_p) \backslash X(\tau)} \sum_{y \in T(1+ p^m \BZ_p) * x} \prod_{i=1}^{n-1}\xi( \kappa_i(y))\cdot \prod_{i=1\atop w(i+1)<w(i)}^{n-1}\xi( \kappa_i'(y)) \\
&= \vert V_{w}(\ell) \rvert^{-1} \cdot  \sum_{ x \in T(1+ p^m \BZ_p) \backslash X(\tau)} \sum_{y \in T(1+ p^m \BZ_p) * x} \sum_{\lambda \times \lambda' \in V_w(\ell)} \prod_{i=1}^{n-1}\xi(\lambda_i\kappa_i(y)) \cdot \prod_{i=1\atop w(i+1)<w(i)}^{n-1}\xi(\lambda_i' \kappa_i'(y)) \\
&= \vert V_{w}(\ell) \rvert^{-1}  \cdot \sum_{ x \in T(1+ p^m \BZ_p) \backslash X(\tau)} N(x) \cdot \sum_{\lambda \times \lambda' \in V_w(\ell)} \prod_{i=1}^{n-1}\xi(\lambda_i \kappa_i(x)) \cdot \prod_{i=1\atop w(i+1)<w(i)}^{n-1}\xi(\lambda_i' \kappa_i'(x)) \\
&= \vert V_{w}(\ell) \rvert^{-1}  \cdot \sum_{ x \in T(1+ p^m \BZ_p) \backslash X(wc)} N(x) \cdot  \sum_{\lambda \times \lambda' \in V_w(\ell)} \prod_{i=1}^{n-1}\xi(\lambda_i \kappa_i(x)) \cdot \prod_{i=1\atop w(i+1)<w(i)}^{n-1}\xi(\lambda_i' \kappa_i'(x)) \\
&= \vert V_{w}(\ell) \rvert^{-1} \times \sum_{x\in T(1+p^m \BZ_p)\backslash X(wc)} N(x) \times  S_{w}(\theta_x;\ell).
\end{aligned}
\end{equation*}
\end{proof}

\begin{rmk} \label{rmk:kloosterman}
In the above Lemma \ref{lemma: stevens}, we have
$$ S_{w_{G_n}}(\theta_x; \ell)= \prod_{i=1}^{n-1} S_2( p^{\ell}  \kappa_i(x), p^{\ell}  \kappa_{n-i}'(x) ; p^{\ell}),$$
where
$$ S_2(\nu, \nu'; p^{\ell}):= \sum_{ \lambda, \lambda' \in (\BZ/ p^{\ell+m} \BZ),\atop \lambda \equiv 1 \mod p^m,\; \lambda  \cdot \lambda' =1} \xi \left( \frac{\nu \lambda+ \nu' \lambda'}{p^{\ell}} \right)= \sum_{ \lambda, \lambda' \in (\BZ/ p^{\ell+m} \BZ),\atop \lambda \equiv 1 \mod p^m,\; \lambda  \cdot \lambda' =1} \xi \left( \frac{ p^m \nu \lambda+ p^m \nu' \lambda'}{p^{\ell+m}} \right)$$
is the (restricted) $\GL(2)$ Kloosterman sum. Here we write $\nu := p^{-m} \widetilde{\nu}$ and $\nu' := p^{-m} \widetilde{\nu}'$, where $\widetilde{\nu}, \widetilde{\nu}' \in \BZ_p-\{ 0\}$.
\end{rmk}

\begin{rmk} \label{rmk:weil}
One may find the $\GL(2)$ Kloosterman sums we get above is a little bit different from the classical $\GL(2)$ Kloosterman sums which we defined at the beginning of the section. The $\GL(2)$ Kloosterman sums here have more restrictions. Note that by the orthogonality of multiplicative characters, such kind of restricted $\GL(2)$ Kloosterman sums are closely related to so called twisted $\GL(2)$ Kloosterman sums. We have the following identity:
\begin{equation}
\begin{aligned}
S_2(\nu, \nu'; p^{\ell}) &:= \sum_{ \lambda, \lambda' \in (\BZ/ p^{\ell+m} \BZ),\atop \lambda \equiv 1 \mod p^m,\; \lambda  \cdot \lambda' =1} \xi \left( \frac{\nu \lambda+ \nu' \lambda'}{p^{\ell}} \right) \\
&=   \sum_{\lambda, \lambda' \in (\BZ/ p^{\ell+m} \BZ), \lambda  \cdot \lambda' =1} \frac{1}{\phi(p^m)} \times \sum_{\chi \mod p^m}  \xi \left( \frac{\nu \lambda+ \nu' \lambda'}{p^{\ell}} \right) \chi(\lambda) \\
&= \frac{1}{\phi(p^m)} \times \sum_{\chi \mod p^m} \sum_{\lambda, \lambda' \in (\BZ/ p^{\ell+m} \BZ), \lambda  \cdot \lambda' =1} \xi \left( \frac{\nu \lambda+ \nu' \lambda'}{p^{\ell}} \right) \chi(\lambda) \\
&= \frac{1}{p^{m-1}(p-1)} \times \sum_{\chi \mod p^m} \sum_{\lambda, \lambda' \in (\BZ/ p^{\ell+m} \BZ), \lambda  \cdot \lambda' =1} \xi \left( \frac{\nu \lambda+ \nu' \lambda'}{p^{\ell}} \right) \chi(\lambda).
\end{aligned}
\end{equation}
Therefore, by \cite[Section 9]{KL13} (Weil bounds for twisted $\GL(2)$ Kloosterman sums), we can still have an analogy of Weil bounds for this kind of restricted Kloosterman sum, which is enough for us to get the non-trivial bound of higher rank Kloosterman sums. More precisely, we have the refined Weil bound as follows:
$$  \vert S_2(\nu, \nu';p^{\ell}) \rvert \leq (\ell+m+1) \cdot C_m \cdot (\gcd(| p^m \nu|_p^{-1},| p^m \nu'|_p^{-1},p^{\ell+m}))^{1/2} p^{(\ell+m)/2},$$
where $p^m \nu, p^m \nu' \in \BZ_p-\{ 0\}$. Here $C_m$ is a positive explicit constant only depends on $p,m$ and is independent on the value of $\ell$. For example, we can pick $C_m=p^{m/2}$ in our case. In the calculation of Section 5 and Appendix, we will see that the constant $C_m$ is not sensitive.
\end{rmk}

\section{Estimating Kloosterman sums for $\GL(n)$}

In this section, we follow Stevens' approach \cite{Ste87} to bound the $\GL(n)$ Kloosterman sums introduced in the previous Section 4. Moreover, we give a non-trivial upper bound for the special $\GL(4)$ case in the Appendix (See Theorem \ref{thm: w_8}). The main ideas and ingredients of the proof on $\GL(4)$ case keep the same as the general $\GL(n)$ case in Section 5. After a more careful and delicate estimation, we can prove a slightly stronger bound than that in Theorem \ref{thm: w_n} for the special $\GL(4)$ case.

For $w\in W_{G_n}$, we define $w(j)$, $j\in\{1,2,3,\cdots, n \}$ by the formula
\[
  w\cdot e_j = \pm e_{w(j)},
\]
where $e_1,e_2,e_3,\cdots,e_n$ is the standard basis of column vectors.
We recall that the non-degenerated additive character $\psi_p$ of $N(\mathbb{Q}_p)$ which is trivial on $N(p^m \mathbb{Z}_p)$ is given by
\begin{equation}\label{eqn: psi1}
      \psi_p\left(\begin{pmatrix} 1&u_1&*&\cdots&*\\ &1&u_2&\cdots&*\\ &&\cdots&\cdots&\cdots \\ &&&1&u_{n-1}\\ &&&&1\end{pmatrix}\right) = \xi( u_1+ u_2+ u_3+\cdots+ u_{n-1}).
\end{equation}
Here $m$ is same as the $m$ that we defined in previous Section 3 and 4. The definition of additive character $\psi_p'$ is given in a similar way.

We fix
\begin{equation}\label{eqn: c}
  c=\diag( p^{a_1}v_1, p^{a_2-a_1}v_2, \cdots,  p^{a_{n-1}-a_{n-2}}v_{n-1}, p^{-a_{n-1}}v_n) \in T.
\end{equation}
In other word, we can write
\begin{equation}
  c=\begin{pmatrix}
         p^{a_1}v_1 &  &  &  & \\
          & p^{a_2-a_1}v_2 &  &  & \\
          & & \cdots & & \\
          &  &  & p^{a_{n-1}-a_{n-2}}v_{n-1} &  \\
          &  &  &  & p^{-a_{n-1}}v_n
       \end{pmatrix},
\end{equation}
where $v_i \in \BZ_p^{\times}$ for $i=1,2,3,\cdots,n$ and $v_1v_2v_3\cdots v_{n}=(-1)^{\frac{(n+1)(n+2)}{2}+1}$. Here $a_1,a_2,a_3,\cdots,a_{n-1}$ are all nonnegative integers. Moreover, we have $a_1,a_2,a_3,\cdots,a_{n-1} \geq m$, where $m$ is defined in previous Section 3 and 4 and is same as the $m$ in previous page.

We will use the same notation as in above Section 4 and \cite[Section 4]{Ste87}. Furthermore, we need Definition \ref{defn: stevens} and Lemma \ref{lemma: stevens} in above Section 4.

\begin{thm}\label{thm: w_n}
  Let $Kl_p(\psi_p ;c,w_{G_n})$ be the local Kloosterman sum attached to the longest element $w_{G_n}$.
  Let $\psi$ be as in \eqref{eqn: psi1}, $\ell=
  \max(a_1,a_2,\cdots,a_{n-1}) \geq m$,
$\varrho=\max(a_{n-1}, a_1)$, $\sigma=\min(a_{n-1}, a_1)$,
and
 \begin{equation*}
\begin{aligned}
 C_n &:=  2^{n^2-1} \cdot p^{2(n+3)(n-1)m} \cdot \left( (p^{2m},p^{\ell+m})^{1/2}\right)^{n-1}
\cdot (\ell+(n-1)m+1)^{(n^2-1)} \cdot ((n-1)\ell+n)^{\frac{n^3}{2}} \\
&= 2^{n^2-1} \cdot p^{2(n+3)(n-1)m} \cdot p^{(n-1)m}
\cdot (\ell+(n-1)m+1)^{(n^2-1)} \cdot ((n-1)\ell+n)^{\frac{n^3}{2}} \\
&= 2^{n^2-1} \cdot p^{(2n+7)(n-1)m} 
\cdot (\ell+(n-1)m+1)^{(n^2-1)} \cdot ((n-1)\ell+n)^{\frac{n^3}{2}}.
\end{aligned}
  \end{equation*}
  Then
  \begin{equation}\label{eqn:Kl-w8}
    \begin{split}
      |Kl_p(\psi_p;c,w_{G_n})|
      & \leq
        C_n \cdot \min(p^{ \sigma +a_2+\cdots+ \varrho/2+\frac{n(n-1)}{2}m},p^{ \ell/2+ 2 \sigma+(n-3)\varrho +a_2+\cdots+a_{n-2}-\ell +\frac{n(n-1)}{2}m}).
    \end{split}
  \end{equation}
  In particular, we have $|Kl_p(\psi_p;c,w_{G_n})|\leq C_n \cdot p^{(1-\frac{1}{4n^2-18n+22})\cdot(a_1+a_2+a_3+\cdots+a_{n-1})+\frac{n(n-1)}{2}m}.$
\end{thm}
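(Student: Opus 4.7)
The plan is to proceed in four steps. First, apply Lemma \ref{lemma: stevens} to reduce the bound on $|Kl_p(\psi_p; c, w_{G_n})|$ to estimating
$$|V_{w_{G_n}}(\ell)|^{-1} \sum_{x \in T(1+p^m\BZ_p) \backslash X(w_{G_n} c)} N(x) \cdot |S_{w_{G_n}}(\theta_x; \ell)|.$$
By Remark \ref{rmk:kloosterman}, the inner character sum factors as a product of $n-1$ restricted $\GL(2)$-type Kloosterman sums $S_2(p^\ell\kappa_i(x), p^\ell\kappa'_{n-i}(x); p^\ell)$, each of which can be estimated via the Weil-type bound of Remark \ref{rmk:weil}. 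This already yields a pointwise estimate for $|S_{w_{G_n}}(\theta_x; \ell)|$ that depends on the $p$-adic valuations of the parameters $\kappa_i(x)$ and $\kappa'_{n-i}(x)$, and accounts for the factors $(\ell+(n-1)m+1)^{n^2-1}$ and the pure $p$-power contributions in $C_n$.

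Next, I would stratify the orbit space $T(1+p^m\BZ_p) \backslash X(w_{G_n}c)$ according to the $(n-1)$-tuples of valuations $(v_p(\kappa_i(x)))_{i}$ and $(v_p(\kappa'_{n-i}(x)))_{i}$, together with the orbit size $N(x)$, which is controlled by the stabilizer of $x$ in $T(1+p^m\BZ_p)$. The Bruhat condition $x = u \cdot w_{G_n} c \cdot u' \in K_m$ imposes polynomial relations on the entries of $u$ and $u'$ in terms of the diagonal of $c$ (i.e., the $a_i$'s). A careful analysis of these relations shows that most matrix entries of $u$ and $u'$ are essentially determined modulo $p$-power congruences by a few free coordinates; the number of orbits in each stratum can therefore be counted explicitly, following the strategy of \cite[Theorem 4.10]{Ste87} and \cite[Section 4--5]{Ste87}, giving a contribution polynomial in $\ell+m$ times a controlled power of $p$.

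The third step is to combine the Weil bound $|S_2(\nu,\nu';p^\ell)| \leq (\ell+m+1)\cdot C_m \cdot p^{(\ell+m)/2} \cdot p^{\min(v_p(p^m\nu),v_p(p^m\nu'),\ell+m)/2}$ with the orbit count from step two, and to sum over all strata. Two natural regimes appear: when $v_p(\kappa_i(x))$ and $v_p(\kappa'_{n-i}(x))$ are small (i.e., the corresponding entries of $u, u'$ are $p$-adically large), the gcd savings in the Weil bound dominate, producing the first exponent inside the minimum of \eqref{eqn:Kl-w8}; when they are large, the Bruhat constraints force the entries of $u, u'$ to cluster so that the number of orbits in the stratum is small, producing the second exponent. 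Taking the minimum yields the displayed two-term bound.

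The main obstacle is organizing the combinatorics of step two uniformly in $n$: there are $2(n-1)$ valuation parameters, many Bruhat constraints coming from the condition $x\in K_m$, and one must balance the orbit count against the Weil savings. The universal exponent $(1-\tfrac{1}{4n^2-18n+22})$ in the "in particular" clause comes from optimizing this trade-off; the denominator $4n^2-18n+22$ reflects the worst-case configuration of the $a_i$'s in which the two bounds inside the minimum of \eqref{eqn:Kl-w8} coincide. Once the min-bound is established, the final clause follows by inserting the trivial estimates $\sigma \leq a_i \leq \ell$ and $\varrho \leq \ell$ into each branch of the minimum and choosing the branch that is smaller for each range of $(a_1,\ldots,a_{n-1})$; the resulting uniform exponent is exactly $(1-\tfrac{1}{4n^2-18n+22})(a_1+\cdots+a_{n-1})+\tfrac{n(n-1)}{2}m$.
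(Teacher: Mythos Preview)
Your high-level strategy matches the paper's: reduce via Lemma~\ref{lemma: stevens}, factor $S_{w_{G_n}}(\theta_x;\ell)$ into $n-1$ restricted $\GL(2)$ sums via Remark~\ref{rmk:kloosterman}, apply the Weil-type bound of Remark~\ref{rmk:weil}, then stratify and count. But the proposal is too schematic at exactly the point where the real work happens, and several of the concrete mechanisms the paper relies on are absent.

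First, the stratification. You propose stratifying by the valuations of both $\kappa_i(x)$ and $\kappa'_{n-i}(x)$. The paper does \emph{not} do this: it stratifies $X(w_{G_n}c)$ only by the superdiagonal data $(b_{i,i+1},c_{i,i+1})$ of $u'(x)$, and then bounds each piece $S_{\{b_{i,i+1}\},\{c_{i,i+1}\}}$ by the Weil bound times $\#X_{\{b_{i,i+1}\},\{c_{i,i+1}\}}(w_{G_n}c)$ (this is Lemma~\ref{lemma:S<<wn}). The $\kappa_i(x)$ on the $u$ side are never tracked individually; they are absorbed into a crude $\gcd(\cdot,p^{\ell+m})\le p^{2m}\cdot p^{\ell-b_{i,i+1}}$ estimate. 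Trying to stratify on both sides simultaneously would multiply the number of cases without a clear payoff.

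Second, the counting step---your step two---is where the argument lives, and ``a careful analysis of these relations'' does not supply it. The paper's counting has three distinct ingredients you do not mention:
\begin{itemize}
\item[(i)] the subdeterminant characterization of $N(\BQ_p)g$ (Lemma~\ref{lemma: stevens2}), which converts $x\in K_m$ into explicit congruence conditions $\det(g_{I,J})\in p^m\BZ_p$ or $1+p^m\BZ_p$ on minors of $w_{G_n}c\cdot u'(x)$;
\item[(ii)] an inductive chain of nested minors $M_{1,k}\subset M_{2,k}\subset\cdots\subset M_k$ whose determinants control the anti-diagonals of $u'(x)$, together with a perturbation trick (right-multiplying by carefully chosen $u'\in N(p^m\BZ_p)$) to force $t_i\le t_{i+1}+m$ and hence bound $\#(c_{1,k},\ldots,c_{n-k+1,n})$ by roughly $p^{a_{n-k+1}}$;
\item[(iii)] in the regime $\sum_i b_{i,i+1}>\ell$, the paper abandons stratum-by-stratum counting and instead invokes the Dabrowski--Reeder bound (Theorem~\ref{DR1998}) on the full size $\#X(w_{G_n}c)\le p^{n(n-1)m}\cdot O_{f_0}(c)$, which supplies the factor $((n-1)\ell+n)^{n^3/2}$ in $C_n$.
\end{itemize}
Without (ii) you cannot get the anti-diagonal counts that produce the second term in the minimum, and without (iii) you have no way to handle the strata where the superdiagonal valuations are large.

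Third, the two branches of the minimum do not arise from a ``small valuation vs.\ large valuation'' dichotomy as you describe. They come from a case split on whether $\varrho=\max(a_1,a_{n-1})$ equals $\ell=\max_i a_i$ or not; the involution $\iota:g\mapsto w_{G_n}(g^t)^{-1}w_{G_n}$ (which swaps $a_1\leftrightarrow a_{n-1}$) is used first to reduce to $a_{n-1}\ge a_1$. When $a_{n-1}=\ell$ the first branch suffices; when some interior $a_k$ is the maximum, a separate submatrix argument using the chain $U_0,U_1,\ldots,U_{n-2}$ is needed to control $\#(c_{2,n-k+2},\ldots,c_{k,n})$ by $p^{(k-1)a_{n-1}}$, which gives the second branch.

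Finally, the ``in particular'' clause is not obtained by inserting $\sigma\le a_i\le\ell$; the paper derives it from the explicit inequality $a_k\gtrless(2n-5)a_{n-1}$, comparing the two branches case by case.
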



We first recall Lemma 5.2 of \cite{Ste87}.

\begin{lem}\label{lemma: stevens2} \cite[Lemma 5.2]{Ste87}
Let $r \geq 1$ and for each $k=1,2,\cdots,r$ let $I_k=\{r-k+1,\cdots,r\}$ be the final $k$-element subset of $\{1,\cdots,r\}$. Let $g,g' \in \GL_r(\BQ_p)$. Then $g' \in N(\BQ_p)g$ if and only if for each $k=1,\cdots,r$ and every $k$-element subset $I \subseteq \{1,\cdots,r\}$, the bottom row of $k \times k$ subdeterminants of $g$ agrees with that for $g'$.
\end{lem}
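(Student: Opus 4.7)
The plan is to prove the two implications separately. The forward direction is essentially a bookkeeping observation, while the reverse direction proceeds by a downward induction on row index, using the Plücker-coordinate interpretation of the minors.

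For the forward direction, suppose $g' = ng$ for some $n \in N(\BQ_p)$. Left multiplication by upper triangular unipotent $n$ replaces row $i$ of $g$ with $v_i + \sum_{j>i} n_{ij}v_j$, where $v_i$ denotes the $i$-th row of $g$. Consequently, the bottom $k$ rows of $ng$ are obtained from the bottom $k$ rows of $g$ by left multiplication by the lower-right $k \times k$ block of $n$, which is itself upper triangular unipotent. Since this block has determinant $1$, the Cauchy-Binet formula (or simply multiplicativity of determinants for the square subfactor acting on rows) forces every $k \times k$ minor of $g'$ with row set $I_k$ and column set $I$ to coincide with the corresponding minor of $g$.

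For the reverse direction, let $v_i$ and $v'_i$ denote the $i$-th rows of $g$ and $g'$, and set $V_k := \Span(v_{r-k+1},\ldots,v_r)$ and $V'_k := \Span(v'_{r-k+1},\ldots,v'_r)$. The hypothesis on minors, together with the Plücker embedding of the Grassmannian and the invertibility of $g$ (which guarantees that some minor with row set $I_k$ is nonzero), implies $V_k = V'_k$ for every $k$. I will then prove by downward induction on $i$ (from $i=r$ to $i=1$) that $v'_i = v_i + \sum_{j>i} n_{ij} v_j$ for some $n_{ij} \in \BQ_p$; these coefficients assemble into an $n \in N(\BQ_p)$ with $g' = ng$. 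The base case $i=r$ is just the $k=1$ instance of the hypothesis. For the inductive step, the hypothesis for larger indices says that the change of basis from $(v_{i+1},\ldots,v_r)$ to $(v'_{i+1},\ldots,v'_r)$ is given by an upper triangular unipotent matrix, which in particular re-confirms $V'_{r-i}=V_{r-i}$. Combined with $V'_{r-i+1}=V_{r-i+1}$, one can expand $v'_i = a v_i + \sum_{j>i} b_j v_j$ with $a \neq 0$, since otherwise $v'_i \in V_{r-i}$ would contradict that the span of $v'_i,v'_{i+1},\ldots,v'_r$ is the strictly larger $V_{r-i+1}$.

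To pin down $a=1$, I would consider the $(r-i+1)\times(r-i+1)$ minor of $g'$ with row set $\{i,\ldots,r\}$ and column set $J$. Row-multilinearity in the $v'_i$ slot, together with the vanishing of any determinant with two rows from $V_{r-i}$, collapses the expansion to $a\cdot \det(v_i, v'_{i+1},\ldots,v'_r)_J$; the inductive hypothesis (upper triangular unipotent change of basis on the lower block) then identifies this with $a\cdot \det(v_i, v_{i+1},\ldots,v_r)_J$. The hypothesis that the $(r-i+1)\times(r-i+1)$ minors of $g$ and $g'$ agree, applied for any $J$ making this minor nonzero (which exists by invertibility of $g$), forces $a=1$, completing the induction.

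The main obstacle, or at least the step requiring the most care, is the inductive bookkeeping in the reverse direction: to rewrite the minor of $g'$ cleanly one must simultaneously use (i) the hypothesis at level $k=r-i+1$, (ii) the already-established expression of $v'_{i'}$ in terms of $v_{i'}$ and subsequent rows for $i'>i$, and (iii) the nonvanishing guaranteed by invertibility of $g$. Once these three ingredients are coordinated, everything else is essentially linear algebra.
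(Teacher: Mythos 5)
The paper does not actually contain a proof of this lemma: it is stated verbatim as a citation to \cite[Lemma 5.2]{Ste87} and then used without further argument. So there is no in-paper proof against which to compare, and your task here was in effect to reconstruct Stevens' proof from scratch.

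Your reconstruction is correct. The forward implication is exactly as you say: left multiplication by $n \in N$ acts on the bottom $k$ rows of $g$ through the lower-right $k \times k$ block of $n$, which is again upper-unipotent of determinant $1$, so every minor with row set $I_k$ is preserved. The reverse implication is also sound. Reading the bottom-$k$-row minors as Pl\"ucker coordinates of the row space $V_k$, and using that $g$ is invertible to get a nonzero such minor, gives $V_k = V_k'$ for every $k$. The downward induction on the row index then works: writing $v_i' = a v_i + \sum_{j>i} b_j v_j$, the terms in $v_j$ ($j>i$) die inside the $(r-i+1)\times(r-i+1)$ minor since they lie in $\Span(v'_{i+1},\dots,v'_r)$ by the inductive hypothesis, and that same inductive hypothesis (unipotent change of basis on the lower block) reduces the surviving determinant to $a$ times the corresponding minor of $g$; choosing a column set $J$ with nonzero minor forces $a=1$. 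One small remark: the intermediate paragraph establishing $a\neq 0$ via a dimension count is logically redundant, since the minor comparison in the next paragraph already pins down $a=1$ directly; you could safely omit it. Otherwise the argument is complete and, as far as I can tell, essentially the standard Gaussian-elimination/Pl\"ucker argument one would expect to find in Stevens.
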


For a fixed element $g_0:= u_1 \cdot w_{G_n} c \cdot u_2 \in X(w_{G_n}c) \subseteq K_m$, where $u_1 \in N(p^m \BZ_p) \bs N(\BQ_p)$ and $u_2 \in N(\BQ_p)/ N(p^m \BZ_p)$. From the definition of the set $X(w_{G_n}c)$, we can assume that the element
$$ u'(g_0):=u_2= \begin{pmatrix}
    1 & x_{1,2} & x_{1,3} & \cdots & x_{1,n} \\
      & 1 & x_{2,3} & \cdots & x_{2,n} \\
      && \cdots & \cdots & \cdots \\
      & & & 1 & x_{n-1,n} \\
     & & & & 1
    \end{pmatrix}
\in N(\BQ_p)/ N(p^m \BZ_p).$$

We let $x_{i,j}=p^{-b_{i,j}} \cdot c_{i,j}$, for $1 \leq i<j \leq n$,
where $b_{i,j}$ are all integers and they satisfy $b_{i,j} \geq -m$. We also have $c_{i,j} \in \BZ_p^{\times}$ for all $1 \leq i<j \leq n$. We write
$$ u_2= \begin{pmatrix}
    1 & x_{1,2} & x_{1,3} & \cdots & x_{1,n} \\
      & 1 & x_{2,3} & \cdots & x_{2,n} \\
      && \cdots & \cdots & \cdots \\
      & & & 1 & x_{n-1,n} \\
     & & & & 1
    \end{pmatrix}
=
\begin{pmatrix}
                                              1 & p^{-b_{1,2}}c_{1,2} &  p^{-b_{1,3}}c_{1,3} & \cdots &  p^{-b_{1,n}}c_{1,n} \\
                                               & 1 & p^{-b_{2,3}}c_{2,3} & \cdots & p^{-b_{2,n}}c_{2,n} \\
                                               & & \cdots & \cdots & \cdots \\
                                               &  &  & 1 & p^{-b_{n-1,n}}c_{n-1,n} \\
                                               &  &  &  & 1
                                            \end{pmatrix}
\in N(\BQ_p)/ N(p^m \BZ_p).$$

Since $X(w_{G_n}c) \subseteq K_m$, we have to put some conditions on $\{b_{i,j}\}$ and $\{c_{i,j}\}$. Thanks to the Lemma \ref{lemma: stevens2} (See also Lemma 5.2 in \cite{Ste87}), we can have the conditions of $\{b_{i,j}\}$ and $\{c_{i,j}\}$ explicitly. The conditions are restrictions of certain subdeterminants of the unipotent radical subgroups, which are some congruence relations (equations). For example, we have $-m \leq b_{i,j} \leq a_i$ for all $1 \leq i \leq n-1$. In particular, we have $b_{1,n}=a_1$.

More explicitly, we note that $g_0:= u_1 \cdot w_{G_n} c \cdot u_2 \in X(w_{G_n}c)$. Since $g_0 \in K_m$, we write $w_{G_n} c \cdot u_2= u_1^{-1} \cdot g_0$. By direct computation, we have
$$ w_{G_n} c \cdot u_2= \begin{pmatrix}
                                               & &   &  &  p^{-a_{n-1}}v_n \\
                                               &  &  & p^{a_{n-1}-a_{n-2}} v_{n-1} & p^{a_{n-1}-a_{n-2}-b_{n-1,n}} v_{n-1} c_{n-1,n} \\
                                               & & \cdots & \cdots & \cdots \\
                                               &p^{a_2-a_1} v_2  & p^{a_2-a_1-b_{2,3}} v_2 c_{2,3} & \cdots & p^{a_2-a_1-b_{2,n}} v_2 c_{2,n} \\
                                              p^{a_1} v_1 & p^{a_1-b_{1,2}} v_1 c_{1,2} & p^{a_1-b_{1,3}} v_1 c_{1,3} & \cdots & p^{a_1-b_{1,n}} v_1 c_{1,n}
                                            \end{pmatrix}.$$
This is a $n \times n$-matrix. Let $I, J \subseteq \{1,2,\cdots,n \}$ be two $k$-element subsets for $1 \leq k \leq n$. We let $g_{I,J}$ be a $k \times k$ submatrix in terms of the matrix $w_{G_n} c \cdot u_2$ by picking the $k \times k$ rows and columns with the index subset $I$ and $J$. We fix $I=\{n-k+1,n-k+2,\cdots,n\}$. By Lemma \ref{lemma: stevens2}, since $g_0 \in X(w_{G_n}c) \subseteq K_m$, we have $\det(g_{I,J}) \in p^m \BZ_p$ if $J \neq \{n-k+1,n-k+2,\cdots,n\}$ for every $1 \leq k \leq n$. If $J=I= \{n-k+1,n-k+2,\cdots,n\}$, we have $\det(g_{I,J}) \in 1+p^m \BZ_p$ for every $1 \leq k \leq n$. For example, for every $1 \leq k \leq n$, if $\{1,2,\cdots,k-1\} \subseteq J$ (If $k=1$, then $\{1,\cdots, k-1\}= \varnothing $), then we have $-m \leq b_{i,j} \leq a_i$ for all $1 \leq i \leq n-1$. If $I=J=\{n\}$, then we have $p^{a_1-b_{1,n}} v_1 c_{1,n} \in 1+p^m \BZ_p$, which gives that $b_{1,n}=a_1$ and $v_1 c_{1,n} \in 1+p^m \BZ_p$.

In summary, if $g_0:= u_1 \cdot w_{G_n} c \cdot u_2 \in X(w_{G_n}c) \subseteq K_m$, and we write 
$$ u'(g_0):=u_2= \begin{pmatrix}
                                              1 & p^{-b_{1,2}}c_{1,2} &  p^{-b_{1,3}}c_{1,3} & \cdots &  p^{-a_1}c_{1,n} \\
                                               & 1 & p^{-b_{2,3}}c_{2,3} & \cdots & p^{-b_{2,n}}c_{2,n} \\
                                               & & \cdots & \cdots & \cdots \\
                                               &  &  & 1 & p^{-b_{n-1,n}}c_{n-1,n} \\
                                               &  &  &  & 1
                                            \end{pmatrix} \pmod{N(p^m \mathbb{Z}_p)},
$$
then $\det(g_{I,J}) \in p^m \BZ_p$ if $J \neq \{n-k+1,n-k+2,\cdots,n\}$ for every $1 \leq k \leq n$. If $J=I= \{n-k+1,n-k+2,\cdots,n\}$, we have $\det(g_{I,J}) \in 1+p^m \BZ_p$ for every $1 \leq k \leq n$. Here $g$ is the $n \times n$-matrix $w_{G_n}c \cdot u_2$. These are the conditions and properties of parameters $\{b_{i,j}\}$ and $\{c_{i,j}\}$ given by Lemma \ref{lemma: stevens2}.

We consider the following $n-1$ submatrices of $u_2 \pmod{N(p^m \mathbb{Z}_p)}$, where $g_0:= u_1 \cdot w_{G_n} c \cdot u_2 \in X(w_{G_n}c) \subseteq K_m$ and $u'(g_0)=u_2$:

$$ M_1:= (p^{-b_{1,n}}c_{1,n} ),\, \text{which is the top right $1 \times 1$ block matrix in $u_2$}\,;$$
$$ M_2:= \begin{pmatrix} p^{-b_{1,n-1}}c_{1,n-1} & p^{-b_{1,n}}c_{1,n} \\ p^{-b_{2,n-1}}c_{2,n-1} & p^{-b_{2,n}} c_{2,n} \end{pmatrix},\, \text{which is the top right $2 \times 2$ block matrix in $u_2$}\,;$$
$$ M_3:= \, \text{the top right $3 \times 3$ block matrix in $u_2$}\,;$$
$$ \cdots \cdots; $$
$$ M_{n-1}:= \, \text{the top right $(n-1) \times (n-1)$ block matrix in $u_2$}.\,$$

Applying Lemma \ref{lemma: stevens2} (See also \cite[Lemma 5.2]{Ste87}) to the submatrix $M_1, M_2, \cdots, M_{n-1}$, we see that
\[
p^{a_1} v_1 \vert M_1 \rvert \in 1+p^m \BZ_p, -p^{a_2} v_1 v_2 \vert M_2 \rvert \in 1+p^m \BZ_p, \cdots, (-1)^{\frac{(n+1)(n+2)}{2}+1} \cdot p^{a_{n-1}}  \left( \prod_{i=1}^{n-1} v_i \right) \cdot \vert M_{n-1} \rvert \in 1+p^m \BZ_p.
\]
Here we let $\vert \cdot \rvert$ be the determinants of matrices.

For every $k$ ($2 \leq k \leq n-1$), we further consider the submatrix:
$$ M_k= \, \text{the top right $k \times k$ block matrix in $u_2$}.$$
For the matrix $M_k$, we consider its $k-1$ submatrices:

$$ M_{1,k}:= (p^{-b_{k,n}}c_{k,n} ),\, \text{which is the bottom right $1 \times 1$ block matrix in $M_k$}\,;$$
$$ M_{2,k}:= \begin{pmatrix} p^{-b_{k-1,n-1}}c_{k-1,n-1} & p^{-b_{k-1,n}}c_{k-1,n} \\ p^{-b_{k,n-1}}c_{k,n-1} & p^{-b_{k,n}} c_{k,n} \end{pmatrix},\, \text{which is the bottom right $2 \times 2$ block matrix in $M_k$}\,;$$
$$ M_{3,k}:= \, \text{the bottom right $3 \times 3$ block matrix in $M_k$}\,;$$
$$ \cdots \cdots; $$
$$ M_{k-1,k}:= \, \text{the bottom right $(k-1) \times (k-1)$ block matrix in $M_k$}\,;$$
$$ M_{k,k}:=M_k.$$

Applying Lemma \ref{lemma: stevens2} to the submatrices $M_{1,k}, M_{2,k}, \cdots, M_{k-1,k}, M_k$, we see that
\[
(-1)^{\frac{k(k-1)}{2}}  \cdot p^{a_{k}} \left( \prod_{i=1}^{k} v_i \right) \cdot \vert M_{k} \rvert \in 1+p^m \BZ_p,\ p^{a_{k}} \vert M_{1,k} \rvert \in p^m \BZ_p,\ p^{a_{k}} \vert M_{2,k} \rvert \in p^m \BZ_p,\ \cdots,\ p^{a_{k}} \vert M_{k-1,k} \rvert \in p^m \BZ_p.
\]
Hence we know that there exist $t_1,t_2,\cdots, t_{k-1} \in \BZ$, where $t_i \leq a_k\leq \ell$ for all $1 \leq i \leq k-1$ such that
\[
p^{t_1} \vert M_{1,k} \rvert \in \BZ_p^{\times},\ p^{t_2} \vert M_{2,k} \rvert \in \BZ_p^{\times},\ \cdots,\ p^{t_{k-1}} \vert M_{1,k-1} \rvert \in \BZ_p^{\times}.
 \]
Furthermore, without loss of generality, we can assume that $t_1= b_{k,n} \geq -m$ and $t_i \leq t_{i+1}+m$ for all $1 \leq i \leq k-1$, where $t_k:=a_k$. Therefore, we have $t_i \geq -i \times m$ for all $1 \leq i \leq k-1 \leq n-1$. Otherwise, we may substitute the element $x_{\{b_{i,j}\}}^{\{c_{i,j}\}} \in C(w_{G_n}c)$ by $x_{\{b_{i,j}\}}^{\{c_{i,j}\}} u' \in C(w_{G_n}c)$, where $u'$ is an element in $N(p^m \BZ_p) \subseteq N(\BZ_p) \subseteq N(\BQ_p)$. For example, if $t_{i+1}<t_i-m$ for certain $1 \leq i \leq k-1$, we may focus on the submatrix $M_{i+1,k}$ and substitute the element $p^{-b_{k-i,n-i}}c_{k-i,n-i}$ by $p^{-b_{k-i,n-i}}c_{k-i,n-i}+p^m$. This is given by the right multiplication of $u'=(u_{i,j})$ with $u_{t,t}=1$ ($1 \leq t \leq n$), $u_{k-i,n-i}=p^m$ and all the other entrices equal to zero. We note that $x_{\{b_{i,j}\}}^{\{c_{i,j}\}}$ and $x_{\{b_{i,j}\}}^{\{c_{i,j}\}} u'$ represent the same element in $X(w_{G_n}c)$. After the right multiplication, we will finally have $t_{i+1} \geq t_i-m$.  

Moreover, we consider extra $n$ submatrices of $M_{n-1}$ as follows:
$$ U_0:= ( 1 );$$
$$ U_1:= ( p^{-b_{n-1,n}}c_{n-1,n} ),\,\text{which is the bottom right $1 \times 1$ block matrix in $M_{n-1}$}\,;$$
$$ U_2:= \begin{pmatrix} p^{-b_{n-2,n-1}}c_{n-2,n-1} & p^{-b_{n-2,n}}c_{n-2,n} \\ 1 & p^{-b_{n-1,n}}c_{n-1,n} \end{pmatrix},\,\text{which is the bottom right $2 \times 2$ block matrix in $M_{n-1}$}\,;$$
$$ U_3:= \,\text{the bottom right $3 \times 3$ block matrix in $M_{n-1}$}\,;$$
$$ \cdots \cdots;$$
$$ U_{n-2}:= \,\text{the bottom right $(n-2) \times (n-2)$ block matrix in $M_{n-1}$}\,;$$
$$ U_{n-1}:= M_{n-1}.$$
Applying Lemma \ref{lemma: stevens2} to the submatrices $U_1, U_2, \cdots, U_{n-2}$, we see that
\[
p^{a_{n-1}} \vert U_1 \rvert \in p^m \BZ_p,\ p^{a_{n-1}} \vert U_2 \rvert \in p^m \BZ_p,\ \cdots,\ p^{a_{n-1}} \vert U_{n-2} \rvert \in  p^m \BZ_p.
\]
Without loss of generality, we can assume that the norm of the determinant of $U_j$ satisfies $\vert \det(U_j) \rvert \geq p^{-(n-1)m}$ for every $0 \leq j \leq n-2$. Otherwise, we may again substitute the element $x_{\{b_{i,j}\}}^{\{c_{i,j}\}} \in C(w_{G_n}c)$ by $x_{\{b_{i,j}\}}^{\{c_{i,j}\}} u' \in C(w_{G_n}c)$, where $u'$ is an element in $N(p^m \BZ_p) \subseteq N(\BZ_p) \subseteq N(\BQ_p)$. For example, if the norm of the determinant of $U_i$ (certain $1 \leq i \leq n-2$) satisfies $\vert \det(U_i) \rvert < p^{-(n-1)m}$, we may focus on the submatrix $U_i$ and substitute the element $p^{-b_{n-i,n-i+1}}c_{n-i,n-i+1}$ by $p^{-b_{n-i,n-i+1}}c_{n-i,n-i+1}+p^m$. This is given by the right multiplication of $u'=(u_{i,j})$ with $u_{t,t}=1$ ($1 \leq t \leq n$), $u_{n-i,n-i+1}=p^m$ and all the other entrices equal to zero. We note that $x_{\{b_{i,j}\}}^{\{c_{i,j}\}}$ and $x_{\{b_{i,j}\}}^{\{c_{i,j}\}} u'$ represent the same element in $X(w_{G_n}c)$. After the right multiplication, we will have the norm of the determinant of $U_i$ satisfies $\vert \det(U_i) \rvert \geq p^{-m} \times \vert \det(U_{i-1}) \rvert$. By induction, we have the determinant $\vert \det(U_i) \rvert \geq p^{-im} \times \vert \det(U_0) \rvert \geq p^{-im} \times 1 = p^{-im} \geq p^{-(n-1)m}$. These right multiplications of $u'$ will give at most $2^{n-2}$ ($\{0,p^m\}^{n-2}$) different cases.

Conversely, if we are given integers $b_{i,j}$ ($1 \leq i<j \leq n$) with $b_{i,j} \geq -m$ and $c_{i,j} \in \BZ_p^{\times}$ ($1 \leq i<j \leq n$) satisfying all the conditions and properties in Lemma \ref{lemma: stevens2} (See also \cite[Lemma 5.2]{Ste87}), then there exists an elememt $x_{\{b_{i,j}\}}^{\{c_{i,j}\}} \in X(w_{G_n}c)$ for which
\begin{equation}\label{eqn: u'(x)}
  u'(x_{\{b_{i,j}\}}^{\{c_{i,j}\}}) = \begin{pmatrix}
                                              1 & p^{-b_{1,2}}c_{1,2} &  p^{-b_{1,3}}c_{1,3} & \cdots &  p^{-a_1}c_{1,n} \\
                                               & 1 & p^{-b_{2,3}}c_{2,3} & \cdots & p^{-b_{2,n}}c_{2,n} \\
                                               & & \cdots & \cdots & \cdots \\
                                               &  &  & 1 & p^{-b_{n-1,n}}c_{n-1,n} \\
                                               &  &  &  & 1
                                            \end{pmatrix} \pmod{N(p^m \mathbb{Z}_p)}.
\end{equation}
We also let
\begin{equation}  \label{eqn: u(x)}
 u(x_{\{b_{i,j}\}}^{\{c_{i,j}\}}) = \begin{pmatrix} 1&u_1&*&\cdots&*\\ &1&u_2&\cdots&*\\ &&\cdots&\cdots&\cdots \\ &&&1&u_{n-1}\\ &&&&1\end{pmatrix}  \in  N(p^m \BZ_p) \bs N(\BQ_p).
\end{equation}

Now we let $\psi_p$ be the nontrivial additive character of $N(\mathbb{Q}_p)$ which is trivial on $N(p^m \mathbb{Z}_p)$, i.e. the nontrivial additive character of $N(\BQ_p)/ N(p^m \BZ_p)$ which is defined in Section 4 and Section 5.
For certain $\{b_{i,j}\}$, and $\{c_{i,j}\}$ ($1 \leq i<j \leq n$) satisfying the conditions and properties given in Lemma \ref{lemma: stevens2}, we define
$$
  X_{\{b_{i,j}\}}^{\{c_{i,j}\}}(w_{G_n}c)
              := T(1+p^m \mathbb{Z}_p)*x_{\{b_{i,j}\}}^{\{c_{i,j}\}}
$$
be the orbit through $x_{\{b_{i,j}\}}^{\{c_{i,j}\}}$, and we also define
$$
  S_{\{b_{i,j}\}}^{\{c_{i,j}\}}(\psi_p;c,w_{G_n})
  := \sum_{x\in X_{\{b_{i,j}\}}^{\{c_{i,j}\}}(w_{G_n}c)}\psi_p(u(x))\psi_p(u'(x))
$$
be the Kloosterman sum restricted to the given orbit.
For all $\{b_{ij}\}, \{c_{ij}\}$ ($1 \leq i<j \leq n$) which satisfy the previous relations and properties given in Lemma \ref{lemma: stevens2}, we fix $\{b_{i,i+1}\}$, $\{c_{i,i+1}\}$ and let
$$
  X_{\{b_{i,i+1}\}, \{c_{i,i+1}\}}(w_{G_n}c) := \bigcup\limits_{\{b_{i,j}\},\{c_{i,j}\},j-i \geq 2} X_{\{b_{i,j}\}}^{\{c_{i,j}\}}(w_{G_n}c),
$$
where $\{b_{ij}\}$ run over all integers bigger than $-m$, $\{c_{ij}\}$ run over all the elements of $\mathbb{Z}_p^\times$, and $\{b_{ij}\}$, $\{c_{ij}\}$
satisfy previous conditions and properties (See Lemma \ref{lemma: stevens2}) for all $j-i \geq 2$. Let
$$
S_{\{b_{i,i+1}\},\{c_{i,i+1}\}}(\psi_p;c,w_{G_n}) := \sum_{x\in X_{\{b_{i,i+1}\},\{c_{i,i+1}\}}(w_{G_n}c)}\psi_p(u(x))\psi_p(u'(x)).
$$


\begin{lem}\label{lemma: X(cw_G)}
  We have $X(w_{G_n}c) = \coprod_{\{b_{i,i+1}\}, \{c_{i,i+1}\}} X_{\{b_{i,i+1}\},\{c_{i,i+1}\}}(w_{G_n}c)$, where $b_{i,i+1}$ run over all integers larger than $-m$ and smaller than $\ell$, and $c_{1,2} \in \mathbb{Z}_p^{\times}/ (1+p^{m_{b_{1,2}}} \BZ_p)$, $c_{2,3} \in \mathbb{Z}_p^{\times}/ (1+p^{m_{b_{2,3}}} \BZ_p)$, $\cdots$, $c_{i,i+1} \in \mathbb{Z}_p^{\times}/ (1+p^{m_{b_{i,i+1}}} \BZ_p)$, $\cdots$, $c_{n-1,n} \in \mathbb{Z}_p^{\times}/ (1+p^{m_{b_{n-1,n}}} \BZ_p)$
satisfying properties in Lemma \ref{lemma: stevens2} (See also \cite[Lemma 5.2]{Ste87}). Here $m_{b_{1,2}}=\min(m, m+b_{1,2})$, $m_{b_{2,3}}=\min(m, m+b_{2,3})$, $\cdots$, $m_{b_{i,i+1}}=\min(m, m+b_{i,i+1})$, $\cdots$, and $m_{b_{n-1,n}}=\min(m, m+b_{n-1,n})$.
\end{lem}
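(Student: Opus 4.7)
The plan is to verify that the $T(1+p^m \BZ_p)$-orbit of $x \in X(w_{G_n}c)$ is completely determined, up to a finer indexing by the off-superdiagonal data, by the superdiagonal entries of $u'(x)$, and then to check that these superdiagonal entries range over exactly the parameters claimed.

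First, I would unpack the normal form \eqref{eqn: u'(x)}: for each $x \in X(w_{G_n}c)$, write the superdiagonal entry $x_{i,i+1} \equiv p^{-b_{i,i+1}} c_{i,i+1} \pmod{p^m \BZ_p}$ with $c_{i,i+1} \in \BZ_p^{\times}$ and $b_{i,i+1}$ an integer in the range $-m < b_{i,i+1} \leq a_i \leq \ell$ dictated by the congruence restrictions of Lemma \ref{lemma: stevens2} (with the edge case $x_{i,i+1} \equiv 0$ absorbed by a boundary convention). The full set of conditions in Lemma \ref{lemma: stevens2} then places $x$ inside a unique $X_{\{b_{i,j}\}}^{\{c_{i,j}\}}(w_{G_n}c)$ determined by all of the $\{b_{i,j}\}, \{c_{i,j}\}$, and hence a fortiori inside a unique $X_{\{b_{i,i+1}\},\{c_{i,i+1}\}}(w_{G_n}c)$.

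Next, I would compute how the $T(1+p^m \BZ_p)$-action affects the superdiagonal. By \eqref{eqn: kappa t*x}, for $t \in T(1+p^m \BZ_p)$ and the longest Weyl element $w = w_{G_n}$ (so $w(i) = n+1-i$), one has
$$\kappa_i'(t * x) = \underline{\lambda}_{n+1-i,\,n-i}(t)\,\kappa_i'(x) = (t_{n+1-i}/t_{n-i})\,\kappa_i'(x),$$
and the ratio $t_{n+1-i}/t_{n-i}$ lies in $1+p^m \BZ_p$. Consequently $b_{i,i+1}$ is an orbit invariant, while $c_{i,i+1}$ is invariant only modulo $1+p^m \BZ_p$. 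Separately, because the factor $p^{-b_{i,i+1}}$ already loses $\max(0,-b_{i,i+1})$ powers of $p$, the truncation of $x_{i,i+1}$ modulo $p^m \BZ_p$ determines $c_{i,i+1}$ only up to $1+p^{m+b_{i,i+1}} \BZ_p$. Combining the two sources of imprecision gives $c_{i,i+1} \in \BZ_p^{\times}/(1+p^{m_{b_{i,i+1}}} \BZ_p)$ with $m_{b_{i,i+1}} = \min(m,\,m+b_{i,i+1})$, matching the statement.

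It remains to assemble the partition. Disjointness of the sets $X_{\{b_{i,i+1}\},\{c_{i,i+1}\}}(w_{G_n}c)$ for distinct data is immediate from the preceding paragraph, since two elements in a common orbit must share the invariants $(b_{i,i+1}, c_{i,i+1} \bmod 1+p^{m_{b_{i,i+1}}} \BZ_p)$. Exhaustiveness is automatic, as every $x \in X(w_{G_n}c)$ supplies such invariants within the stated ranges. By definition, $X_{\{b_{i,i+1}\},\{c_{i,i+1}\}}(w_{G_n}c)$ is the union of all the finer $X_{\{b_{i,j}\}}^{\{c_{i,j}\}}(w_{G_n}c)$ sharing the given superdiagonal data, so the announced disjoint union follows. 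The main obstacle in writing this out carefully is purely bookkeeping: one must check compatibility of the ranges $-m < b_{i,i+1} < \ell$ with the subdeterminant congruences imposed by Lemma \ref{lemma: stevens2}, and confirm that the two precision levels (from the $p^m$-truncation of $N(\BQ_p)/N(p^m \BZ_p)$ and from the torus action) combine cleanly to yield the single congruence $1 + p^{m_{b_{i,i+1}}} \BZ_p$.
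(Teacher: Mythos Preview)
Your proposal is correct and follows essentially the same route as the paper: both arguments reduce to the injectivity of $u': X(w_{G_n}c) \to N(\BQ_p)/N(p^m\BZ_p)$ coming from uniqueness of the Bruhat decomposition, so that each $x$ is determined by (and hence lands in the piece indexed by) its $\{b_{i,j}\},\{c_{i,j}\}$ data. Your write-up is in fact more explicit than the paper's, which simply asserts disjointness ``by definition'' and does not spell out why the two sources of indeterminacy (the $N(p^m\BZ_p)$-truncation and the $T(1+p^m\BZ_p)$-action) combine to give exactly the quotient $\BZ_p^\times/(1+p^{m_{b_{i,i+1}}}\BZ_p)$ for the superdiagonal labels.
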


\begin{proof}
The proof is the same as Lemma 5.2 and 5.7 in \cite{Ste87}. The union is clearly disjoint and is contained in $X(w_{G_4} \widetilde{c})$ by definition. 
Actually, from the uniqueness of the Bruhat decomposition, the map $u': X(\tau) \rightarrow N(\BQ_p)/ N(p^m \BZ_p)$ is injective. Hence the matrix $u(x)$ is uniquely determined by the matrix $u'(x)$. For every $g_0 \in X(w_{G_n} c)$, we write $g_0= u_1 \cdot w_{G_n} c \cdot u_2$ for some $u_1 \in N(p^m \BZ_p) \bs N(\BQ_p)$ and $u_2 \in N(\BQ_p)/ N(p^m \BZ_p)$. Hnece, we can write $u'(g_0)=u_2 \pmod{N(p^m \mathbb{Z}_p)}$. We note that $u_1^{-1} \cdot g_0= w_{G_n} c \cdot u_2= \tau \cdot u_2$. By Lemma \ref{lemma: stevens2} (See also \cite[Lemma 5.2]{Ste87}), it is known that parameters $\{b_{i,j}\}$ and $\{c_{i,j}\}$ satisfy certain congruence properties and relations if we write $u_2= \begin{pmatrix}
                                              1 & p^{-b_{1,2}}c_{1,2} &  p^{-b_{1,3}}c_{1,3} & \cdots &  p^{-a_1}c_{1,n} \\
                                               & 1 & p^{-b_{2,3}}c_{2,3} & \cdots & p^{-b_{2,n}}c_{2,n} \\
                                               & & \cdots & \cdots & \cdots \\
                                               &  &  & 1 & p^{-b_{n-1,n}}c_{n-1,n} \\
                                               &  &  &  & 1
                                            \end{pmatrix} \pmod{N(p^m \mathbb{Z}_p)}.$
Since
\begin{equation} 
  u'(x_{\{b_{i,j}\}}^{\{c_{i,j}\}}) = \begin{pmatrix}
                                              1 & p^{-b_{1,2}}c_{1,2} &  p^{-b_{1,3}}c_{1,3} & \cdots &  p^{-a_1}c_{1,n} \\
                                               & 1 & p^{-b_{2,3}}c_{2,3} & \cdots & p^{-b_{2,n}}c_{2,n} \\
                                               & & \cdots & \cdots & \cdots \\
                                               &  &  & 1 & p^{-b_{n-1,n}}c_{n-1,n} \\
                                               &  &  &  & 1
                                            \end{pmatrix} \pmod{N(p^m \mathbb{Z}_p)},
\end{equation}
we know that $g_0= x_{\{b_{i,j}\}}^{\{c_{i,j}\}}$ by the uniqueness of the Bruhat decomposition and $g_0$ $ \in X_{\{b_{i,j}\}}^{\{c_{i,j}\}}(w_{G_n}c) \subseteq X_{\{b_{i,i+1}\},\{c_{i,i+1}\}}(w_{G_n}c)$.
\end{proof}

\begin{rmk}
In Section 4, we know that $u'(t*x)=s \cdot u'(x) \cdot s^{-1}$ for $t \in T(1+p^m \BZ_p)$ and $s:=\tau^{-1} t \tau \in T(1+p^m \BZ_p)$. Since the map $u': X(\tau) \rightarrow N(\BQ_p)/ N(p^m \BZ_p)$ is injective, we see that the orbits in $X(\tau)$ correspond to $T(1+p^m \BZ_p)$-conjugacy classes in the coset $N(\BQ_p)/ N(p^m \BZ_p)$. Moreover, from the above injective map $u'$, the counting of the size of the Kloosterman set $X(\tau)=X(w_{G_n}c)$ transfers to the counting of corresponding elements in the coset $N(\BQ_p)/ N(p^m \BZ_p)$.
\end{rmk}

\begin{lem}\label{lemma:S<<wn}
  Let $\ell = \max(a_1,a_2,\cdots,a_{n-1}) \geq m $, 
  and $b_{i,i+1}+m \leq a_i$ (for all $1 \leq i \leq n-1$) be integers which are bigger than $-m$.
  Then
  \[
    \begin{split}
      & |S_{\{b_{i,i+1}\},\{c_{i,i+1}\}}(\psi_p ;c,w_{G_n})|  \\
     \leq & \hskip 10pt
         2^{n-1} \cdot p^{2(n-1)m} \cdot (p^{2m}, p^{\ell+m})^{1/2} (p^{2m} ,p^{\ell+m})^{1/2} \cdots ( p^{2m} ,p^{\ell+m})^{1/2} \\
      & \hskip 10pt \cdot (\ell+m+1)^{n-1} \cdot
          p^{-\frac{b_{1,2}+b_{2,3}+\cdots+b_{n-1,n}}{2}}  \cdot  \#(X_{\{b_{i,i+1}\},\{c_{i,i+1}\}}(w_{G_n}c)) \\
     = & \hskip 10pt
         2^{n-1} \cdot p^{2(n-1)m} \cdot p^{(n-1)m} \\
      & \hskip 10pt \cdot (\ell+m+1)^{n-1} \cdot
          p^{-\frac{b_{1,2}+b_{2,3}+\cdots+b_{n-1,n}}{2}}  \cdot  \#(X_{\{b_{i,i+1}\},\{c_{i,i+1}\}}(w_{G_n}c)) \\
     = & \hskip 10pt 2^{n-1} \cdot p^{3(n-1)m}  \cdot (\ell+m+1)^{n-1} \cdot
          p^{-\frac{b_{1,2}+b_{2,3}+\cdots+b_{n-1,n}}{2}}  \cdot  \#(X_{\{b_{i,i+1}\},\{c_{i,i+1}\}}(w_{G_n}c)). 
       \end{split}
  \]
\end{lem}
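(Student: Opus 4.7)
The plan is to treat \(S_{\{b_{i,i+1}\},\{c_{i,i+1}\}}(\psi_p;c,w_{G_n})\) as the Kloosterman sum restricted to a single $T(1+p^m\BZ_p)$-orbit class and then apply the toolkit already set up: Lemma \ref{lemma: stevens}, the factorization in Remark \ref{rmk:kloosterman}, and the Weil-type bound in Remark \ref{rmk:weil}. The overall strategy mimics Stevens' argument in \cite[Section 5]{Ste87}, but adapted to the \(n\)-tuple of parameters \((b_{1,2}, b_{2,3},\ldots,b_{n-1,n})\) and \((c_{1,2},\ldots,c_{n-1,n})\).

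First, I would observe that Lemma \ref{lemma: stevens} applies equally well to each piece of the orbit decomposition in Lemma \ref{lemma: X(cw_G)}; restricting the outer sum to orbits representatives \(x\in T(1+p^m\BZ_p)\backslash X_{\{b_{i,i+1}\},\{c_{i,i+1}\}}(w_{G_n}c)\) one obtains
\[
S_{\{b_{i,i+1}\},\{c_{i,i+1}\}}(\psi_p;c,w_{G_n})
   \;=\; |V_{w_{G_n}}(\ell)|^{-1}\sum_{x}N(x)\,S_{w_{G_n}}(\theta_x;\ell).
\]
A direct count from Definition \ref{defn: stevens}(b) gives \(|V_{w_{G_n}}(\ell)|=p^{(n-1)\ell}\). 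Next, Remark \ref{rmk:kloosterman} factors \(S_{w_{G_n}}(\theta_x;\ell)\) into a product of \(n-1\) restricted \(\GL(2)\) Kloosterman sums \(S_2(p^\ell\kappa_i(x),\,p^\ell\kappa_{n-i}'(x);p^\ell)\).

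Second, within the orbit class the \((i,i+1)\)-entries of \(u'(x)\) are fixed modulo \(p^m\), i.e.\ \(\kappa_i'(x)\equiv p^{-b_{i,i+1}}c_{i,i+1}\), so the \(p\)-adic valuation of the corresponding argument of each \(S_2\) depends only on \(b_{i,i+1}\). Applying the refined Weil bound of Remark \ref{rmk:weil} to each factor, the gcd is controlled uniformly by \((p^{2m},p^{\ell+m})\) (using \(\ell\geq m\) and \(b_{i,i+1}+m\leq a_i\leq\ell\)), and a factor of \(p^{-b_{i,i+1}/2}\) appears from rewriting \(p^{(\ell+m)/2}\) in terms of the effective modulus of the restricted Kloosterman sum. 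Taking the product over \(i=1,\ldots,n-1\) yields
\[
|S_{w_{G_n}}(\theta_x;\ell)|
 \;\leq\; (\ell+m+1)^{n-1}\cdot C_m^{n-1}\cdot\bigl((p^{2m},p^{\ell+m})^{1/2}\bigr)^{n-1}\cdot p^{(n-1)(\ell+m)/2}\cdot p^{-\sum_{i}b_{i,i+1}/2}
\]
with \(C_m=p^{m/2}\); this bound is independent of the orbit representative \(x\).

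Third, I would apply the triangle inequality and use \(\sum_xN(x)=\#(X_{\{b_{i,i+1}\},\{c_{i,i+1}\}}(w_{G_n}c))\) to reduce everything to the claimed product. Combining with \(|V_{w_{G_n}}(\ell)|^{-1}=p^{-(n-1)\ell}\) absorbs the \(p^{(n-1)\ell/2}\) factor, leaving the prefactor of the shape \(p^{3(n-1)m}(\ell+m+1)^{n-1}\) after substituting \(C_m=p^{m/2}\) and \((p^{2m},p^{\ell+m})^{1/2}=p^{m/2}\). The combinatorial factor \(2^{n-1}\) accounts for the at most \(2^{n-2}\) sub-case splittings (and an extra factor of \(2\) absorbing one more dyadic adjustment) produced by the right-multiplications by elements of \(N(p^m\BZ_p)\) in the preceding normalization of parameters \(\{b_{i,j}\}\).

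The main obstacle I anticipate is the uniform control of the gcd factor in the Weil bound across the orbit class: while \(\kappa_i'(x)\) is fixed in terms of \(b_{i,i+1}\), the entries \(\kappa_i(x)\) of \(u(x)\) depend, via the Bruhat decomposition, on the off-diagonal parameters \(\{b_{i,j}\},\{c_{i,j}\}\) (\(j-i\geq 2\)) that are allowed to vary inside the class. One needs to verify that the bound used \(\gcd\leq(p^{2m},p^{\ell+m})\) continues to hold, or at least that the weaker uniform bound suffices; this is exactly the analogue of Stevens' careful bookkeeping in \cite[Lemma 5.9]{Ste87} and will require an inductive argument on the size of the minors \(M_k\) combined with the constraints derived from Lemma \ref{lemma: stevens2}.
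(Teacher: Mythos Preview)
Your overall strategy --- restrict Lemma \ref{lemma: stevens} to a single orbit class, factor $S_{w_{G_n}}(\theta_x;\ell)$ as a product of $n-1$ restricted $\GL(2)$ Kloosterman sums via Remark \ref{rmk:kloosterman}, apply the refined Weil bound of Remark \ref{rmk:weil} to each factor, and collapse $\sum_x N(x)$ back to $\#(X_{\{b_{i,i+1}\},\{c_{i,i+1}\}}(w_{G_n}c))$ --- is exactly the paper's approach. However, two pieces of your bookkeeping are misplaced, and together they lead you to anticipate an obstacle that is not there.

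The crucial point you are missing is that the saving $p^{-b_{i,i+1}/2}$ comes entirely from the $\kappa'$ side of the gcd in the Weil bound, not from any ``effective modulus'' rewriting. Concretely, $\kappa_{i}'(x)=p^{-b_{i,i+1}}c_{i,i+1}$ is constant across the orbit class, so in the corresponding $S_2$ factor one has $|p^m\nu'|_p^{-1}=p^{\ell+m-b_{i,i+1}}$, and the paper simply bounds
\[
\gcd\bigl(|p^m\nu|_p^{-1},\,p^{\ell+m-b_{i,i+1}},\,p^{\ell+m}\bigr)\;\leq\;\gcd(p^{2m},p^{\ell+m})\cdot p^{\ell-b_{i,i+1}},
\]
\emph{discarding the first argument} $|p^m\nu|_p^{-1}=|p^{\ell+m}\kappa_{n-i}(x)|_p^{-1}$ altogether. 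Taking square roots and multiplying by the fixed $p^{(\ell+m)/2}$ from Remark \ref{rmk:weil} produces the factor $p^{\ell-b_{i,i+1}/2}$; the $p^{(n-1)\ell}$ then cancels against $|V_{w_{G_n}}(\ell)|^{-1}$. This is why your ``main obstacle'' evaporates: no uniform control of $\kappa_i(x)=u_i$ across the orbit class is ever needed, and no inductive argument on the minors $M_k$ enters this lemma. (Those minors are used later, in the proof of Theorem \ref{thm: w_n}, to bound $\#(X_{\{b_{i,i+1}\},\{c_{i,i+1}\}}(w_{G_n}c))$ itself.)

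Similarly, the factor $2^{n-1}$ does not come from the $N(p^m\BZ_p)$-normalization case splittings you describe; in the paper it arises from bounding the prefactor $|V_{w_{G_n}}(\ell)|^{-1}$ by $p^{-(n-1)\ell}(1-p^{-1})^{-(n-1)}$ and then using $(1-p^{-1})^{-(n-1)}\leq 2^{n-1}$ for $p\geq 2$.
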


\begin{proof}
  The involution map $\iota: g \rightarrow g^{\iota}:= w_{G_n} \cdot (g^t)^{-1} \cdot w_{G_n}$ sends $X_{\{b_{i,j}\}}^{\{c_{i,j}\}}(w_{G_n}c)$ to $X_{\{b_{i,j}\}}^{\{c_{i,j}\}}((w_{G_n}c)^\iota)$.
  Composing $\psi_p$, $\psi_p'$ with $\iota$ has the effect of replacing $\psi_p$ by $\overline{\psi_p}$ and $\psi_p'$ by $\overline{\psi_p'}$.
  For $g_0 =u_1 w_{G_n}c u_2 \in X(w_{G_n}c)$, we have $g_0^{\iota} \in X((w_{G_n}c)^{\iota}) = X( w_{G_n} (w_{G_n} c^{-1} w_{G_n}))  \subseteq K_m$ by definition.
  Hence, applying $\iota$ to the element $w_{G_n}c$ reverses the roles of $a_{n-1}$ and $a_1$.
  Thus we may assume that $a_{n-1} \geq a_1$ without loss of generality.

  We recall that $\ell=\max(a_1,a_2,\cdots,a_{n-1})=\max(a_2,\cdots,a_{n-1})$. Then Properties in Lemma 5.2 of \cite{Ste87} imply that the matrix entries of $u(x)$ and $u'(x)$
  lie in $p^{-\ell}\mathbb{Z}_p/ p^m \mathbb{Z}_p$ for every $x\in X(w_{G_n}c)$.
  Indeed, by Lemma \ref{lemma: X(cw_G)}, it is enough to verify this for $x=x_{\{b_{i,j}\}}^{\{c_{i,j}\}}(w_{G_n}c)$. By definition, it is easy to verify that the matrix entries of $u'(x)$ lies in $p^{-\ell}\mathbb{Z}_p/ p^m \mathbb{Z}_p$. 
By the uniqueness of the Bruhat decomposition, Proposition 3.1 and Lemma 3.4 in \cite{KN22}, we see that the matrix entrices of $u(x)$ also lies in $p^{-\ell}\mathbb{Z}_p/ p^m \mathbb{Z}_p$.

  Now let $\mathcal{S}$ be a finite subset of $\mathbb{Z}_{\geq -m}^{\frac{(n-1)(n-2)}{2}-1}\times (\mathbb{Z}_p^\times)^{\frac{(n-1)(n-2)}{2}}$
  such that $X_{\{b_{i,i+1}\}, \{c_{i,i+1}\}}(w_{G_n}c)$ is the disjoint union of the
  $X_{\{b_{i,j}\}}^{\{c_{i,j}\}}(w_{G_n}c)$ with $(\{b_{i,j}\}, \{c_{i,j}\},j-i \geq 2)\in\mathcal{S}$.
  Then as in Lemma \ref{lemma: stevens}
  we have
  \begin{equation}\label{eqn: S decomp1}
\begin{aligned}
    S_{\{b_{i,i+1}\}, \{c_{i,i+1}\}}(\psi_p ;c,w_{G_n})
    & < p^{-(n-1)\ell}(1-p^{-1})^{-(n-1)} \cdot \\
& \cdot \sum_{(\{b_{i,j}\}, \{c_{i,j}\},j-i \geq 2)\in\mathcal{S}}
    \#(X_{\{b_{i,j}\}}^{\{c_{i,j}\}}( w_{G_n}c))
    S_{w_{G_n}}(\theta_{\{b_{i,j}\}}^{\{c_{i,j}\}};\ell),
\end{aligned}
  \end{equation}
  where $S_{w_{G_n}}$ is defined in Definition \ref{defn: stevens},
  and $\theta_{\{b_{i,j}\}}^{\{c_{i,j}\}}: A_{w_{G_n}}(\ell)\rightarrow\mathbb{C}^\times$
  is the character defined in Definition \ref{defn: stevens} by
  \[
    \begin{split}
      \theta_{\{b_{i,j}\}}^{\{c_{i,j}\}}(\underline{\lambda}\times\underline{\lambda}')
      &  = e( u_1\lambda_1+ u_2\lambda_2+\cdots+ u_{n-1}\lambda_{n-1}\\
      & + p^{-b_{12}}c_{12} \lambda_1'+ p^{-b_{23}}c_{23} \lambda_2'+\cdots+ p^{-b_{n-1,n}}c_{n-1,n}\lambda_{n-1}').
    \end{split}
  \]

  By Remark \ref{rmk:kloosterman} (See also Example 4.12 in Stevens \cite{Ste87}), we have
  \begin{equation}\label{eqn: S decomp to S_2}
    \begin{split}
       S_{w_{G_n}}(\theta_{\{b_{i,j}\}}^{\{c_{i,j}\}};\ell) & = S_2( u_1p^{\ell}, c_{n-1,n}p^{\ell-b_{n-1,n}};p^{\ell}) \\
      & \hskip 10pt  \cdot S_2( u_2p^{\ell}, c_{n-2,n-1}p^{\ell-b_{n-2,n-1}};p^{\ell}) \cdots \\
      & \hskip 10pt \cdot S_2( u_{n-1}p^{\ell}, c_{12}p^{\ell-b_{12}};p^{\ell}),
    \end{split}
  \end{equation}
  where $S_2$ is the restricted $\GL(2)$-Kloosterman sum defined in Remark \ref{rmk:kloosterman}.
  By refined Weil's bound established in Remark \ref{rmk:weil}, we have the inequality
  \begin{equation}\label{eqn: S_2}
    |S_2(\nu,\nu';p^{\ell})| \leq (\ell+m+1) p^{m/2} (\gcd(|p^m \nu|_p^{-1},|p^m \nu'|_p^{-1},p^{\ell+m}))^{1/2} p^{(\ell+m)/2},
  \end{equation}
  for $\nu, \nu' \in p^{-m} \mathbb{Z}_p- \{0 \} $ (See also Section 9 in \cite{KL13}).

 In order to apply the refined Weil's bound, we note that
  \[
    \begin{split}
       \gcd(| u_{n-1}p^{\ell+m}|_p^{-1},| p^{\ell+m-b_{12}}|_p^{-1},p^{\ell+m}) & \leq \gcd( p^{2m},p^{\ell+m}) \cdot p^{\ell-b_{12}}, \\
       \gcd(| u_{n-2}p^{\ell+m}|_p^{-1},| p^{\ell+m-b_{23}}|_p^{-1},p^{\ell+m}) & \leq \gcd( p^{2m} ,p^{\ell+m})p^{\ell-b_{23}}, \\
       \cdots \cdots \\
       \gcd(| u_1 p^{\ell+m}|_p^{-1},| p^{\ell+m-b_{n-1,n}}|_p^{-1},p^{\ell+m}) & \leq \gcd( p^{2m} ,p^{\ell+m})p^{\ell-b_{n-1,n}}.
    \end{split}
  \]
since $\gcd(a,b) \leq \min(a,b)$.   Hence we have
  \begin{equation*}\label{eqn: S_w_8 bound}
  \begin{aligned}
   |S_{w_{G_n}}(\theta_{\{b_{i,j}\}}^{\{c_{i,j}\}};\ell)|
    \leq (\ell+m+1)^{n-1} p^{2(n-1)m} \cdot\left((p^{2m} ,p^{\ell+m})^{1/2}\right)^{n-1}
\cdot  p^{(n-1) \ell-\frac{b_{12}+b_{23}+\cdots+b_{n-1,n}}{2}}.
  \end{aligned}
  \end{equation*}
  This inequality, together with \eqref{eqn: S decomp1}, gives
  \begin{equation}\label{eqn: Sa,b,c}
    \begin{split}
      |S_{\{b_{i,i+1}\},\{c_{i,i+1}\}}(\psi_p ; c,w_{G_n})|& \leq (\ell+m+1)^{n-1}
      \cdot\left(( p^{2m} ,p^{\ell+m})^{1/2}\right)^{n-1}
      \cdot p^{2(n-1)m-\frac{1}{2}\sum_{j=1}^{n-1}b_{j,j+1}} \\
&\quad \cdot (1-p^{-1})^{-(n-1)} \cdot \sum_{(\{b_{i,j}\},\{c_{i,j}\},j-i \geq 2)\in\mathcal{S}}  \#(X_{\{b_{i,j}\}}^{\{c_{i,j}\}}(w_{G_n}c)).
    \end{split}
  \end{equation}
  The sum appearing on the right hand side is equal to $\#(X_{\{b_{i,i+1}\},\{c_{i,i+1}\}}(w_{G_n}c))$.
  Since $p\geq 2$ we have $(1-p^{-1})^{-(n-1)}\leq 2^{n-1}$, by \eqref{eqn: Sa,b,c}. This completes the proof of the lemma.
\end{proof}

\begin{proof}[Proof of Theorem \ref{thm: w_n}]

  By the involution map $\iota$, we can assume that $a_{n-1}\geq a_1$ without loss of generality.
  Let
\begin{equation*}
\begin{aligned}
     C :=2^{n^2-1} \cdot p^{(2n+5)(n-1)m} \cdot  \left((p^{2m} ,p^{\ell+m})^{1/2}\right)^{n-1}
\cdot (\ell+(n-1)m+1)^{(n^2-n)} \cdot ((n-1)\ell+n)^{\frac{n^3}{2}}.
\end{aligned}
\end{equation*}

  At first, we deal with the case $a_{n-1}=\max(a_1,a_2,\cdots,a_{n-1})$.

If $b_{1,2}+b_{2,3}+\cdots+ b_{n-1,n}> a_{n-1}$, we have $p^{-\frac{b_{1,2}+b_{2,3}+\cdots+b_{n-1,n}}{2}}< p^{-\frac{a_{n-1}}{2}}$. From Theorem \ref{DR1998} and the proof of Theorem \ref{delta}, we know that
\begin{equation*}
\begin{aligned}
 X_{\{b_{i,i+1}\},\{c_{i,i+1}\}}(w_{G_n}c) \leq & X(w_{G_n} c) \\
=& \# N(p^m \BZ_p) \bs C(w_{G_n} c) / N(p^m \BZ_p) \\
= & \# N(p^m \BZ_p) \bs \left(N(\BQ_p) w_{G_n} c N(\BQ_p) \cap K_m \right)/ N(p^m \BZ_p) \\
 \leq & \# N(p^m \BZ_p) \bs \left(N(\BQ_p) w_{G_n} c N(\BQ_p \right) \cap K)/ N(p^m \BZ_p) \\
 = & p^{n(n-1)m} \times \# N( \BZ_p) \bs \left(N(\BQ_p) w_{G_n} c N(\BQ_p \right) \cap K)/ N(\BZ_p).\\
 = & p^{n(n-1)m} \times O_{f_0}(c),
\end{aligned}
\end{equation*}
where $O_{f_0}(c)$ is the local orbital integral defined in Theorem \ref{DR1998} and the proof of Theorem \ref{delta}. Hence, we have
\begin{equation*}
\begin{aligned}
X_{\{b_{i,i+1}\},\{c_{i,i+1}\}}(w_{G_n}c) & \leq X(w_{G_n} c) \leq p^{n(n-1)m} \times O_{f_0}(c) \\
& \leq p^{n(n-1)m} \times p^{a_1+a_2+\cdots+ a_{n-1}} \times R(c)  \\
& \leq p^{n(n-1)m} \times p^{a_1+a_2+\cdots+ a_{n-1}} \times \left( a_1+a_2+\cdots+a_{n-1}+n \right)^{\frac{n^3}{2}} \\
& \leq p^{n(n-1)m} \times p^{a_1+a_2+\cdots+ a_{n-1}} \times \left( (n-1) \ell +n \right)^{\frac{n^3}{2}}.
\end{aligned}
\end{equation*}
Hence by Lemma \ref{lemma:S<<wn}, we have
$$
  \vert S_{\{b_{i,i+1}\},\{c_{i,i+1}\}}(\psi_p ; c,w_{G_n}) \rvert \leq C \times p^{a_1+a_2+\cdots+a_{n-2}+\frac{a_{n-1}}{2}+\frac{n(n-1)}{2}m}.
$$
This proves the case for $b_{1,2}+b_{2,3}+\cdots+ b_{n-1,n}> a_{n-1}$.

Now we assume that $b_{1,2}+b_{2,3}+\cdots+ b_{n-1,n} \leq  a_{n-1}$.

If
\begin{equation}
\begin{aligned}
& b_{1,3}+b_{2,4}+\cdots+b_{n-2,n} \leq a_{n-2};\\
& b_{1,4}+b_{2,5}+\cdots+b_{n-3,n} \leq a_{n-3};\\
& \cdots \cdots \cdots \\
& b_{1,k}+b_{2,k+1}+\cdots+b_{n-k+1,n} \leq a_{n-k+1};\\
& \cdots \cdots \cdots \\
& b_{1,n-1}+b_{2,n} \leq a_2;
\end{aligned}
\end{equation}

then we have $\#(\{b_{i,j}\}) \leq (\ell+m+1)^{\frac{(n-2)(n-1)}{2}}$ and $\#(\{c_{i,j}\}) \leq p^{\sum_{1 \leq i<j \leq n} b_{i,j}+ \frac{(n-2)(n-1)}{2}m}$ for $j-i \geq 2$. Therefore, we obtain that
\begin{equation*}
\begin{aligned}
\#( X_{\{b_{i,i+1}\},\{c_{i,i+1}\}}(w_{G_n}c)) & \leq (\ell+m+1)^{\frac{(n-2)(n-1)}{2}} \cdot p^{\sum_{1 \leq i<j \leq n} b_{i,j}+ \frac{n(n-1)}{2}m} \\
& \leq (\ell+m+1)^{\frac{(n-2)(n-1)}{2}} \cdot p^{a_1+a_2+\cdots+a_{n-2}+(b_{1,2}+b_{2,3}+\cdots+b_{n-1,n})+ \frac{n(n-1)}{2}m}.
\end{aligned}
\end{equation*}
Hence by Lemma \ref{lemma:S<<wn}, we have
$$
  \vert S_{\{b_{i,i+1}\},\{c_{i,i+1}\}}(\psi_p ; c,w_{G_n}) \rvert \leq C \cdot p^{a_1+a_2+\cdots+a_{n-2}+\frac{a_{n-1}}{2}+\frac{n(n-1)}{2}m}.
$$
Applying the above Lemma \ref{lemma: X(cw_G)}, we have
$$ \vert Kl_p(\psi_p ;c,w_{G_n}) \rvert \leq C_n \cdot p^{a_1+a_2+\cdots+a_{n-2}+\frac{a_{n-1}}{2}+\frac{n(n-1)}{2}m}.$$

If there exists $k$ ($3 \leq k \leq n-1$) such that $b_{1,k}+b_{2,k+1}+\cdots+b_{n-k+1,n} > a_{n-k+1}$, we will show that 
$$ \#(c_{1,k}, c_{2,k+1},\cdots, c_{n-k,n-1}, c_{n-k+1,n}) \leq 2^{n-k} \cdot (\ell+(n-1)m+1)^{n-k+1} \cdot p^{a_{n-k+1}+(n-k+1)m}. $$

We will consider the following $n-1$ submatrices of the uniponent subgroup $$u_2= \begin{pmatrix}
                                              1 & p^{-b_{1,2}}c_{1,2} &  p^{-b_{1,3}}c_{1,3} & \cdots &  p^{-b_{1,n}}c_{1,n} \\
                                               & 1 & p^{-b_{2,3}}c_{2,3} & \cdots & p^{-b_{2,n}}c_{2,n} \\
                                               & & \cdots & \cdots & \cdots \\
                                               &  &  & 1 & p^{-b_{n-1,n}}c_{n-1,n} \\
                                               &  &  &  & 1
                                            \end{pmatrix}:
$$

$$ M_1:= (p^{-b_{1,n}}c_{1,n} ),\,\text{which is the top right $1 \times 1$ block matrix in $u_2$}\,;$$
$$ M_2:= \begin{pmatrix} p^{-b_{1,n-1}}c_{1,n-1} & p^{-b_{1,n}}c_{1,n} \\ p^{-b_{2,n-1}}c_{2,n-1} & p^{-b_{2,n}} c_{2,n} \end{pmatrix},\,\text{which is the top right $2 \times 2$ block matrix in $u_2$}\,;$$
$$ M_3:= \text{the top right $3 \times 3$ block matrix in $u_2$}\,;$$
$$ \cdots \cdots; $$
$$ M_{n-1}:= \text{the top right $(n-1) \times (n-1)$ block matrix in $u_2$}.\,$$

Applying Lemma \ref{lemma: stevens2} (See also \cite[Lemma 5.2]{Ste87}) to the submatrix $M_1, M_2, \cdots, M_{n-1}$, we see that
\[
p^{a_1} v_1 \vert M_1 \rvert \in 1+p^m \BZ_p, -p^{a_2} v_1 v_2 \vert M_2 \rvert \in 1+p^m \BZ_p, \cdots, (-1)^{\frac{(n+1)(n+2)}{2}+1} \cdot p^{a_{n-1}}  \left( \prod_{i=1}^{n-1} v_i \right) \cdot \vert M_{n-1} \rvert \in 1+p^m \BZ_p;
\]
where $\vert \cdot \rvert$ is the determinants of matrices.

For every $k$ ($3 \leq k \leq n-1$), we further consider the submatrix:
$$ M_{k}=\,\text{the top right $k \times k$ block matrix in $u_2$}.$$

For the matrix $M_k$, we consider its $k-1$ submatrices:

$$ M_{1,k}:= (p^{-b_{k,n}}c_{k,n} ),\,\text{which is the bottom right $1 \times 1$ block matrix in $M_k$}\, ;$$
$$ M_{2,k}:= \begin{pmatrix} p^{-b_{k-1,n-1}}c_{k-1,n-1} & p^{-b_{k-1,n}}c_{k-1,n} \\ p^{-b_{k,n-1}}c_{k,n-1} & p^{-b_{k,n}} c_{k,n} \end{pmatrix},\,\text{which is the bottom right $2 \times 2$ block matrix in $M_k$}\, ;$$
$$ M_{3,k}:= \text{the bottom right $3 \times 3$ block matrix in $M_k$}\, ;$$
$$ \cdots \cdots; $$
$$ M_{k-1,k}:= \text{the bottom right $(k-1) \times (k-1)$ block matrix in $M_k$}\, ;$$
$$ M_{k,k}:=M_k.$$

Applying Lemma \ref{lemma: stevens2} to the submatrices $M_{1,k}, M_{2,k}, \cdots, M_{k-1,k}, M_k$, we see that
\[
(-1)^{\frac{k(k-1)}{2}}  \cdot p^{a_{k}} \left( \prod_{i=1}^{k} v_i \right) \cdot \vert M_{k} \rvert \in 1+p^m \BZ_p,\ p^{a_{k}} \vert M_{1,k} \rvert \in p^m \BZ_p,\ p^{a_{k}} \vert M_{2,k} \rvert \in p^m \BZ_p,\ \cdots,\ p^{a_{k}} \vert M_{k-1,k} \rvert \in p^m \BZ_p.
\]
Hence we know that there exist $t_1,t_2,\cdots, t_{k-1} \in \BZ$, where $t_i \leq a_k\leq \ell$ for all $1 \leq i \leq k-1$ such that
\[
p^{t_1} \vert M_{1,k} \rvert \in \BZ_p^{\times},\ p^{t_2} \vert M_{2,k} \rvert \in \BZ_p^{\times},\ \cdots,\ p^{t_{k-1}} \vert M_{1,k-1} \rvert \in \BZ_p^{\times}.
 \]
Furthermore, without loss of generality, we can assume that $t_1= b_{k,n} \geq -m$ and $t_i \leq t_{i+1}+m$ for all $1 \leq i \leq k-1$, where $t_k:=a_k$. Therefore, we have $t_i \geq -i \times m$ for all $1 \leq i \leq k-1 \leq n-1$. Otherwise, we may substitute the element $x_{\{b_{i,j}\}}^{\{c_{i,j}\}} \in C(w_{G_n}c)$ by $x_{\{b_{i,j}\}}^{\{c_{i,j}\}} u' \in C(w_{G_n}c)$, where $u'$ is an element in $N(p^m \BZ_p) \subseteq N(\BZ_p) \subseteq N(\BQ_p)$. For example, if $t_{i+1}<t_i-m$ for certain $1 \leq i \leq k-1$, we may focus on the submatrix $M_{i+1,k}$ and substitute the element $p^{-b_{k-i,n-i}}c_{k-i,n-i}$ by $p^{-b_{k-i,n-i}}c_{k-i,n-i}+p^m$. This is given by the right multiplication of $u'=(u_{i,j})$ with $u_{t,t}=1$ ($1 \leq t \leq n$), $u_{k-i,n-i}=p^m$ and all the other entrices equal to zero. We note that $x_{\{b_{i,j}\}}^{\{c_{i,j}\}}$ and $x_{\{b_{i,j}\}}^{\{c_{i,j}\}} u'$ represent the same element in $X(w_{G_n}c)$. After the right multiplication, we will finally have $t_{i+1} \geq t_i-m$. These right multiplications of $u'$ will give at most $2^{k-1}$ ($\{0, p^m \}^{k-1}$) different cases. Therefore by induction, we see that
\begin{align*}
\#(c_{k,n}) &\leq  p^{t_1+m},\\
\#(c_{k-1,n-1}, c_{k,n}) &\leq  2 \cdot p^{t_1+m+(t_2-t_1)+m} = 2 \cdot p^{t_2+2m},\\
&\cdots,  \\
\#(c_{i,n-k+i}, c_{i+1,n-k+i+1}, \cdots, c_{k-1,n-1}, c_{k,n}) &\leq 2 \cdot 2^{k-i-1} \cdot p^{t_{k-i}+(k-i)m+ (t_{k-i+1}-t_{k-i})+m}  \;\; (1 \leq i \leq k) \\
&= 2^{k-i} \cdot p^{t_{k-i+1}+(k-i+1)m}\;\; (1 \leq i \leq k), \\
& \cdots, \\
\#(c_{2,n-k+2}, c_{3,n-k+3}, \cdots, c_{k-1,n-1}, c_{k,n}) &\leq 2 \cdot 2^{k-3} \cdot p^{t_{k-2}+(k-2)m+(t_{k-1}-t_{k-2})+m} \\
&= 2^{k-2} \cdot p^{t_{k-1}+(k-1)m}.\\
\#(c_{1,n-k+1}, c_{2,n-k+2}, c_{3,n-k+3}, \cdots, c_{k-1,n-1}, c_{k,n}) &\leq 2 \cdot 2^{k-2} \cdot p^{t_{k-1}+(k-1)m+(t_k-t_{k-1})+m}  \\
&=  2^{k-1} \cdot p^{t_k+km} \\
&=  2^{k-1} \cdot p^{a_k+km}.
\end{align*}
We also note that $$ \#(t_1,t_2,\cdots, t_k) \leq (\ell+(n-1)m+1)^k,$$ which will give that 
$$ \#(c_{1,k}, c_{2,k+1},\cdots, c_{n-k,n-1}, c_{n-k+1,n}) \leq 2^{n-k} \cdot (\ell+(n-1)m+1)^{n-k+1} \cdot p^{a_{n-k+1}+(n-k+1)m}$$
for every $3 \leq k \leq n-1$.

For the general case, using the same argument as above discussions, we write
\begin{equation}
\begin{aligned}
& b_{1,3}+b_{2,4}+\cdots+b_{n-2,n} = a_{n-2}+d_{n-2},\\
& b_{1,4}+b_{2,5}+\cdots+b_{n-3,n} = a_{n-3}+d_{n-3},\\
& \cdots \cdots,\\
& b_{1,k}+b_{2,k+1}+\cdots+ b_{n-k+1,n}= a_{n-k+1}+d_{n-k+1},\\
& \cdots \cdots, \\
& b_{1,n-2}+b_{2,n-1}+b_{3,n}=a_3+d_3,\\
& b_{1,n-1}+b_{2,n}=a_2+d_2.
\end{aligned}
\end{equation}
We can obtain an upper bound as follows:
$$\#(\{c_{i,j}\}) \leq 2^{\frac{(n-2)(n-1)}{2}} \cdot (\ell+(n-1)m+1)^{\frac{(n-2)(n-1)}{2}} \cdot p^{\sum_{1 \leq i<j \leq n} b_{i,j}+ \frac{(n-2)(n-1)}{2}m- \sum_{j=2}^{n-1} \max(0, d_j) },$$
where $j-i \geq 2$ and by Lemma \ref{lemma:S<<wn}, we have
$$
  \vert S_{\{b_{i,i+1}\},\{c_{i,i+1}\}}(\psi_p ; c,w_{G_n}) \rvert \leq C \cdot p^{a_1+a_2+\cdots+a_{n-2}+\frac{a_{n-1}}{2}+\frac{n(n-1)}{2}m}.
$$
Applying the above Lemma \ref{lemma: X(cw_G)}, we have
$$ \vert Kl_p(\psi_p ;c,w_{G_n}) \rvert \leq C_n \cdot p^{a_1+a_2+\cdots+a_{n-2}+\frac{a_{n-1}}{2}+\frac{n(n-1)}{2}m}.$$

  Note that in this case, since $a_{n-1}=\max(a_1,a_2,\cdots,a_{n-1})$, we always have
  \[
  a_1+a_2+\cdots+a_{n-2}+\frac{a_{n-1}}{2} \leq \ell/2 + 2a_1+(n-3)a_{n-1}+a_2+a_3+\cdots+a_{n-2}-\ell.
  \]
  Thus Theorem \ref{thm: w_n} follows from the equality
  $$Kl_p(\psi_p ;c,w_{G_n})=\sum\limits_{\{b_{i,i+1}\}, \{c_{i,i+1}\}} S_{\{b_{i,i+1}\},\{c_{i,i+1}\}}(\psi_p ; c,w_{G_n}).$$

  If we have $a_{n-1} \neq \max(a_1,a_2,\cdots,a_{n-1})$,
  by a similar argument as above, we obtain
  \[
    |S_{\{b_{i,i+1}\},\{c_{i,i+1}\}}(\psi_p ; c,w_{G_n})| \leq C  \cdot p^{a_1+a_2+\cdots+a_{n-2}+\frac{a_{n-1}}{2}+\frac{n(n-1)}{2}m}.
  \]
Applying the above Lemma \ref{lemma: X(cw_G)}, we will have
$$ \vert Kl_p(\psi_p ;c,w_{G_n}) \rvert \leq C_n \cdot p^{a_1+a_2+\cdots+a_{n-2}+\frac{a_{n-1}}{2}+\frac{n(n-1)}{2}m}.$$
Note that if $a_{n-1} < < \max(a_1,a_2,\cdots,a_{n-1})$, in other word $a_{n-1}$ is small, this bound is not good enough.
  So we have to bound this Kloosterman sum in other way.

Now we assume that $a_k:= \max(a_1,a_2,\cdots,a_{n-1})$. Since we assume that $a_{n-1} \geq a_1$, we have $2 \leq k \leq n-2$. If there exist multiple maximum, we may pick $a_k$ with the smallest index $k$. 

If $b_{1,2}+b_{2,3}+\cdots+ b_{n-1,n}> a_{k}>a_{n-1}$, then using the similar argument as above (when $a_{n-1}=\max(a_1,a_2,\cdots,a_{n-1})$), we have $p^{-\frac{b_{1,2}+b_{2,3}+\cdots+b_{n-1,n}}{2}}< p^{-\frac{a_{k}}{2}}$. From Theorem \ref{DR1998} and the proof of Theorem \ref{delta}, we know that
\begin{equation*}
\begin{aligned}
 X_{\{b_{i,i+1}\},\{c_{i,i+1}\}}(w_{G_n}c) \leq & X(w_{G_n} c) \\
=& \# N(p^m \BZ_p) \bs C(w_{G_n} c) / N(p^m \BZ_p) \\
= & \# N(p^m \BZ_p) \bs \left(N(\BQ_p) w_{G_n} c N(\BQ_p) \cap K_m \right)/ N(p^m \BZ_p) \\
 \leq & \# N(p^m \BZ_p) \bs \left(N(\BQ_p) w_{G_n} c N(\BQ_p \right) \cap K)/ N(p^m \BZ_p) \\
 = & p^{n(n-1)m} \times \# N( \BZ_p) \bs \left(N(\BQ_p) w_{G_n} c N(\BQ_p \right) \cap K)/ N(\BZ_p).\\
 = & p^{n(n-1)m} \times O_{f_0}(c),
\end{aligned}
\end{equation*}
where $O_{f_0}(c)$ is the local orbital integral defined in Theorem \ref{DR1998} and the proof of Theorem \ref{delta}. Hence, we have
\begin{equation*}
\begin{aligned}
 X_{\{b_{i,i+1}\},\{c_{i,i+1}\}}(w_{G_n}c) & \leq X( w_{G_n} c)  \leq p^{n(n-1)m} \times O_{f_0}(c) \\
& \leq p^{n(n-1)m} \times p^{a_1+a_2+\cdots+ a_{n-1}} \times R(c) \\
& \leq p^{n(n-1)m} \times p^{a_1+a_2+\cdots+ a_{n-1}} \times \left( a_1+a_2+\cdots+a_{n-1}+n \right)^{\frac{n^3}{2}} \\
& \leq p^{n(n-1)m} \times p^{a_1+a_2+\cdots+ a_{n-1}} \times \left( (n-1) \ell +n \right)^{\frac{n^3}{2}}.
\end{aligned}
\end{equation*}
Hence by Lemma \ref{lemma:S<<wn}, we have
\begin{equation}
\begin{aligned}
  \vert S_{\{b_{i,i+1}\},\{c_{i,i+1}\}}(\psi_p ; c,w_{G_n}) \rvert & \leq C \times p^{a_1+a_2+\cdots+a_{n-2}+a_{n-1}-\frac{a_{k}}{2}+\frac{n(n-1)}{2}m} \\
&= C \times p^{a_1+a_2+\cdots+a_{k-1}+\frac{a_k}{2}+a_{k+1}+\cdots+ a_{n-2}+a_{n-1} +\frac{n(n-1)}{2}m}.
\end{aligned}
\end{equation}
Hence, this proves the case for $b_{1,2}+b_{2,3}+\cdots+ b_{n-1,n}> a_{k}$.

If $b_{1,2}+b_{2,3}+\cdots+ b_{n-1,n} \leq a_{k}$, for this case, we consider extra $n-1$ submatrices of $M_{n-1}$ as follows:
$$ U_0:= ( 1 );$$
$$ U_1:= ( p^{-b_{n-1,n}}c_{n-1,n} ),\,\text{which is the bottom right $1 \times 1$ block matrix in $M_{n-1}$}\, ;$$
$$ U_2:= \begin{pmatrix} p^{-b_{n-2,n-1}}c_{n-2,n-1} & p^{-b_{n-2,n}}c_{n-2,n} \\ 1 & p^{-b_{n-1,n}}c_{n-1,n} \end{pmatrix},\,\text{which is the bottom right $2 \times 2$ block matrix in $M_{n-1}$}\, ;$$
$$ U_3:= \,\text{the bottom right $3 \times 3$ block matrix in $M_{n-1}$}\, ;$$
$$ \cdots \cdots;$$
$$ U_{n-2}:= \,\text{the bottom right $(n-2) \times (n-2)$ block matrix in $M_{n-1}$}.$$
Applying Lemma \ref{lemma: stevens2} to the submatrices $U_1, U_2, \cdots, U_{n-2}$, we see that
\[
p^{a_{n-1}} \vert U_1 \rvert \in p^m \BZ_p,\ p^{a_{n-1}} \vert U_2 \rvert \in p^m \BZ_p,\ \cdots,\ p^{a_{n-1}} \vert U_{n-2} \rvert \in  p^m \BZ_p.
\]
By our assumption, we have $a_k= \max(a_1,a_2,\cdots,a_{n-1})$. We will focus on the following $k-1$ elements:
\[
p^{-b_{k,n}}c_{k,n},\ p^{-b_{k-1,n-1}}c_{k-1,n-1},\ \cdots,\ p^{-b_{2, n-k+2}}c_{2,n-k+2}.
\]
We have the following cases:

If $b_{k,n} \leq a_{n-1}$, then we have $\#(c_{k,n}) \leq p^{a_{n-1}+m}$.

If $b_{k,n} > a_{n-1}$, since $p^{a_{n-1}} \vert U_{n-k} \rvert \in p^m \BZ_p$ and the coefficient of $p^{-b_{k,n}}c_{k,n}$ is given by $(-1)^{n-k+1} \cdot U_0=(-1)^{n-k+1} \cdot (1)= (-1)^{n-k+1}$, then we have
\[
\#(c_{k,n}) \leq p^{b_{k,n}-(b_{k,n}-a_{n-1})+m} \leq p^{a_{n-1}+m}.
\]

Following above idea, we continue our induction steps. If $b_{k-j,n-j} \leq a_{n-1}$, then we have $\#(c_{k-j,n-j}) \leq p^{a_{n-1}+m}$ for $0 \leq j \leq k-2$.

If $b_{k-j,n-j} > a_{n-1}$, since $p^{a_{n-1}} \vert U_{n-k+j} \rvert \in p^m \BZ_p$ and the coefficient of $p^{-b_{k-j,n-j}}c_{k-j,n-j}$ is given by above matrix $(-1)^{n-k+1} \cdot \vert U_j \rvert$. Moreover, without loss of generality, we can assume that the norm of the determinant of $U_j$ satisfies $\vert \det(U_j) \rvert \geq p^{-(n-1)m}$ for every $0 \leq j \leq n-2$. Otherwise, we may again substitute the element $x_{\{b_{i,j}\}}^{\{c_{i,j}\}} \in C(w_{G_n}c)$ by $x_{\{b_{i,j}\}}^{\{c_{i,j}\}} u' \in C(w_{G_n}c)$, where $u'$ is an element in $N(p^m \BZ_p) \subseteq N(\BZ_p) \subseteq N(\BQ_p)$. For example, if the norm of the determinant of $U_i$ (certain $1 \leq i \leq n-2$) satisfies $\vert \det(U_i) \rvert < p^{-(n-1)m}$, we may focus on the submatrix $U_i$ and substitute the element $p^{-b_{n-i,n-i+1}}c_{n-i,n-i+1}$ by $p^{-b_{n-i,n-i+1}}c_{n-i,n-i+1}+p^m$. This is given by the right multiplication of $u'=(u_{i,j})$ with $u_{t,t}=1$ ($1 \leq t \leq n$), $u_{n-i,n-i+1}=p^m$ and all the other entrices equal to zero. We note that $x_{\{b_{i,j}\}}^{\{c_{i,j}\}}$ and $x_{\{b_{i,j}\}}^{\{c_{i,j}\}} u'$ represent the same element in $X(w_{G_n}c)$. After the right multiplication, we will have the norm of the determinant of $U_i$ satisfies $\vert \det(U_i) \rvert \geq p^{-m} \times \vert \det(U_{i-1}) \rvert$. By induction, we have the determinant $\vert \det(U_i) \rvert \geq p^{-im} \times \vert \det(U_0) \rvert \geq p^{-im} \times 1 = p^{-im} \geq p^{-(n-1)m}$. These right multiplications of $u'$ will give at most $2^{n-2}$ ($\{0,p^m\}^{n-2}$) different cases. Therefore we have
\[
\#(c_{k-j,n-j}) \leq  p^{b_{k-j,n-j}-(b_{k-j,n-j}-a_{n-1})+(n-1)m} \leq  p^{a_{n-1}+(n-1)m},
\]
for every $0 \leq j \leq k-2$. Now we vary $j$ from $0 \leq j \leq k-2$, we will have
\[
\#(c_{2,n-k+2}, c_{3,n-k+3},\cdots, c_{k-1,n-1}, c_{k,n}) \leq 2^{n-2} \times p^{(k-1)a_{n-1}+(n-1)(k-1)m}.
\]

It remains to bound the remaining $\#(c_{i,j})$ for $j-i \geq 2$.

Since $b_{1,2}+b_{2,3}+\cdots+b_{n-1,n} \leq  a_k$, then using the similar argument as above (We recall that $ \#(c_{1,j}, c_{2,j+1},\cdots, c_{n-j,n-1}, c_{n-j+1,n}) \leq 2^{n-j} \cdot (\ell+(n-1)m+1)^{n-j+1} \cdot p^{a_{n-j+1}+(n-j+1)m}$ for every $3 \leq j \neq n+1-k \leq n-1$ and $\#(c_{2,n-k+2}, c_{3,n-k+3},\cdots, c_{k-1,n-1}, c_{k,n}) \leq 2^{n-2} \cdot p^{(k-1)a_{n-1}+(n-1)(k-1)m}$), we will obtain the bound as follows: 
\begin{equation}
\begin{aligned}
\#(\{c_{i,j}\}) \leq & 2^{\frac{n(n-1)}{2}} \times (\ell+(n-1)m+1)^{\frac{(n-2)(n-1)}{2}} \\
& \times p^{ 2a_1+a_2+\cdots+a_{k-1}+a_{k+1}+\cdots+a_{n-2}+(k-1)a_{n-1}+(k-1)(n-1)m+\frac{(n-2)(n-1)}{2}m},
\end{aligned}
\end{equation}
for $j-i \geq 2$. Since $2 \leq k \leq n-2$, we will have
\begin{equation}
\begin{aligned}
\#(\{c_{i,j}\}) \leq & 2^{\frac{n(n-1)}{2}} \times (\ell+(n-1)m+1)^{\frac{(n-2)(n-1)}{2}} \\
& \times p^{ 2a_1+a_2+\cdots+a_{k-1}+a_{k+1}+\cdots+a_{n-2}+(n-3)a_{n-1}+(n-3)(n-1)m+\frac{(n-2)(n-1)}{2}m},
\end{aligned}
\end{equation}
where $j-i \geq 2$. Hence by Lemma \ref{lemma:S<<wn}, we have
$$
  \vert S_{\{b_{i,i+1}\},\{c_{i,i+1}\}}(\psi_p ; c,w_{G_n}) \rvert \leq C \times p^{ a_k /2 + 2a_1+a_2+\cdots+a_{k-1}+a_{k+1}+\cdots+a_{n-2}+(n-3)a_{n-1}+\frac{n(n-1)}{2}m}.
$$
Applying the above Lemma \ref{lemma: X(cw_G)}, we have
$$ \vert Kl_p(\psi_p;c,w_{G_n}) \rvert \leq C_n \times p^{ a_k /2 + 2a_1+a_2+\cdots+a_{k-1}+a_{k+1}+\cdots+a_{n-2}+(n-3)a_{n-1}+\frac{n(n-1)}{2}m}.$$

This proves the main inequality in our Theorem \ref{thm: w_n}.

For the second claim, we recall that $a_k=\max(a_1,a_2,\cdots,a_{n-1}) \geq a_{n-1} \geq a_1$. We note that if $a_k \geq (2n-5)a_{n-1}$ (Note that $n \geq 3$), the following inequality holds:
$$a_1+a_2+\cdots+a_{n-2}+\frac{a_{n-1}}{2} \geq a_k /2 + 2a_1+(n-3)a_{n-1}+a_2+a_3+\cdots+a_{k-1}+a_{k+1}+\cdots+ a_{n-2}.$$
Therefore, if $a_k \leq (2n-5)a_{n-1}$, we will have
\begin{equation*}
\begin{aligned}
a_1+a_2+\cdots+a_{n-2}+\frac{a_{n-1}}{2}  & \leq \left( 1- \frac{0.5}{(2n-5)(n-3)+2} \right) \cdot (a_1+a_2+\cdots+a_{n-2}+a_{n-1}) \\
& \leq \left( 1- \frac{0.5}{(2n-5)(n-2)+1} \right) \cdot (a_1+a_2+\cdots+a_{n-2}+a_{n-1}) \\
& = \left( 1- \frac{1}{4n^2-18n+22} \right) \cdot (a_1+a_2+\cdots+a_{n-2}+a_{n-1}).
\end{aligned}
\end{equation*}
If $a_k \geq (2n-5)a_{n-1}$, we will also have (Note that $n \geq 3$)
\begin{equation*}
\begin{aligned}
\ell/2 + 2a_1+(n-3)a_{n-1}+\sum_{j=2,j\neq k}^{n-2}a_j
\leq & \left( 1- \frac{0.5}{(2n-5)(n-3)+2} \right) \cdot \sum_{j=1}^{n-1}a_j\\
\leq & \left( 1- \frac{0.5}{(2n-5)(n-2)+1} \right) \cdot \sum_{j=1}^{n-1}a_j\\
= & \left( 1- \frac{1}{4n^2-18n+22} \right) \cdot \sum_{j=1}^{n-1}a_j.
\end{aligned}
\end{equation*}
This proves the second claim and therefore Theorem \ref{thm: w_n}.

\end{proof}

\begin{rmk}
Note that the trivial bound for the local Kloosterman sum (integral) (see \cite{DR98}) is the following:
$$ \vert Kl_p(\psi_p;c,w_{G_n}) \rvert \leq A_{\epsilon} \cdot p^{(1+\epsilon)(a_1+a_2+a_3+\cdots+a_{n-1})},\; \epsilon>0$$
where $A_{\epsilon}$ is a positive constant independent on the choice of $c$. Since $1-\frac{1}{4n^2-18n+22} <1$, so applying Steven's method, we get a nontrivial bound for the local Kloosterman sum (integral).
\end{rmk}

\begin{rmk}
  The result is not optimal when $n \geq 4$. When $n=2$ and $n=3$, the constant $\frac{1}{2}$ and $\frac{1}{4}$ are sharp (See \cite{Ba97} and \cite{DF97}).
  To improve the bound in other cases, one may use the stationary phase formulas as Dabrowski and Fisher did for $\GL(3)$ (\cite{DF97}). It is also a very interesting question to ask whether we can improve the exponent $1-\frac{1}{4n^2-18n+22}$ to some $1- \delta$, where $0< \delta <1$ and $\delta$ is independent on the choice of the positive integer $n$ (See \cite{Ste87} for the conjecture: $\delta=\frac{1}{4}$).
\end{rmk}

\begin{rmk}
Theorem \ref{thm: w_n}, Theorem \ref{thm: w_8} (Non-trivial upper bound for general Kloosterman sums), Theorem \ref{inte} (Local integrability of Bessel functions) and all the results in Section 4 (See also \cite[Section 4]{Ste87}) are expected to be true for $\GL_n(F)$, where $F$ is a general $p$-adic local field. We hope to come back to these generalizations in the near future.
\end{rmk}

\begin{rmk}
In Theorem \ref{thm: w_n} and Section 4, we only consider the special non-degenerated additive character $\psi(\sum_{i=1}^{n-1} u_{i,i+1})$, since all non-degenerated additive characters are in the same orbit under the action of diagonal matrices $T$. If we consider the general additive characters, we write the non-degenerated additive characters $\psi_p$ and $\psi_p'$ of $N(\mathbb{Q}_p)$ which are trivial on $N(p^m \mathbb{Z}_p)$ as follows:
\begin{equation} 
      \psi_p\left(\begin{pmatrix} 1&u_1&*&\cdots&*\\ &1&u_2&\cdots&*\\ &&\cdots&\cdots&\cdots \\ &&&1&u_{n-1}\\ &&&&1\end{pmatrix}\right) = \xi( \nu_1 u_1+ \nu_2 u_2+ \nu_3 u_3+\cdots+ \nu_{n-1} u_{n-1}),
\end{equation}
and
\begin{equation} 
      \psi_p' \left(\begin{pmatrix} 1&u_1&*&\cdots&*\\ &1&u_2&\cdots&*\\ &&\cdots&\cdots&\cdots \\ &&&1&u_{n-1}\\ &&&&1\end{pmatrix}\right) = \xi( \nu_1' u_1+ \nu_2' u_2+ \nu_3' u_3+\cdots+ \nu_{n-1}' u_{n-1}),
\end{equation}
where $\nu_1,\nu_2,\nu_3, \cdots, \nu_{n-1}$, $\nu_1', \nu_2',\mu_3',\cdots, \nu_{n-1}'$ $ \in p^{-m}\mathbb{Z}_p- \{0 \}$. Moreover, we further assume that $p^{-m} \leq \vert \nu_i \rvert \leq p^{m}$ and $p^{-m} \leq \vert \nu_i' \rvert \leq p^{m}$ for all $1 \leq i, i'  \leq n-1$. Here $m$ is same as the $m$ that we defined in previous Section 3 and 4. We have the following non-trivial upper bound for $\GL(n)$ generalized Kloosterman sums on non-degenerated additive characters:
Let $\ell=
  \max(a_1,a_2,\cdots,a_{n-1}) \geq m$,
$\varrho=\max(a_{n-1}, a_1)$, $\sigma=\min(a_{n-1}, a_1)$,
and
  \begin{equation}
\begin{aligned}
 D_n :=  2^{n^2-1} \cdot p^{2(n+3)(n-1)m} \cdot \left(\prod_{j=1}^{n-1} (|\nu_j\nu_{n-j}' p^{2m}|_p^{-1},p^{\ell+m})^{1/2}\right)
\cdot (\ell+(n-1)m+1)^{(n^2-1)} \cdot ((n-1)\ell+n)^{\frac{n^3}{2}}.
\end{aligned}
  \end{equation}
 Then
  \begin{equation}
    \begin{split}
      |Kl_p(\psi_p,\psi_p';c,w_{G_n})|
      & \leq
        D_n \cdot \min(p^{ \sigma +a_2+\cdots+ \varrho/2+\frac{n(n-1)}{2}m},p^{ \ell/2+ 2 \sigma+(n-3)\varrho +a_2+\cdots+a_{n-2}-\ell +\frac{n(n-1)}{2}m}).
    \end{split}
  \end{equation}
  In particular, we have $|Kl_p(\psi_p,\psi_p';c,w_{G_n})|\leq D_n \cdot p^{(1-\frac{1}{4n^2-18n+22})\cdot(a_1+a_2+a_3+\cdots+a_{n-1})+\frac{n(n-1)}{2}m}.$
\end{rmk}

\section{Proof of Local Integrability of Bessel functions for $\GL(n)$}

In this section, we use the results in the previous Section 5 to prove the local integrability of Bessel functions for $\GL(n)$ ($n \geq 4$). The local integrability of Bessel functions has only been proved for $\GL(2)$ \cite{So84} \cite{Ba97} and $\GL(3)$ \cite{Ba04}. The main ingredients of their proofs are some basic analytic properties of the Bessel functions and orbital integrals. Moreover, an explicit expression for the $\GL(3)$ relative Shalika germ $K_e^{w_{G_3}}$ (see \cite{JY99}) and the $p$-adic stationary phase methods are also needed in \cite{Ba04}. However, an explicit expression for the (relative) Shalika germs is invalid for $\GL(n)$ ($n \geq 4$) and the $p$-adic stationary phase calculations become very complicated. In order to overcome these obstacles, we apply Stevens' method in \cite{Ste87} to bound the (relative) Shalika germs $K_e^{w_{G_n}}$. The key ingredient of our proof is a nontrivial bound for the local Kloosterman sum (integral) attached to the longest Weyl element which we have already established in Theorem \ref{thm: w_n}.

First, let's recall the Conjecture \ref{conj1} in Section 3.

\begin{conj}  \label{conj6} [Conjecture \ref{conj1}]
Fix $f \in C_c^{\infty}(G_n)=C_c^{\infty}(\GL_n(\BQ_p))$. Then $\vert I_{f, \psi}(g) \Delta^{\frac{1}{2}-\delta_n}(g) \rvert $ is bounded on compact sets in $G_n$ for some given $\delta_n>0$.
\end{conj}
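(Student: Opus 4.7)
The plan is to combine the relative Shalika germ expansion of Jacquet--Ye (Theorem \ref{germ}) with the nontrivial Kloosterman bound of Theorem \ref{thm: w_n}, using induction on $n$. By Theorem \ref{germ}, on a neighbourhood of any point of the open Bruhat cell one has a finite decomposition $I_{f,\psi}(g) = I(e\cdot, f)(g) = \sum_{w' \in R_{G_n}} (K_e^{w'} \star \omega_{w'})(g)$ with $\omega_{w'} \in C_c^\infty(A_{w'})$ depending on $f$. Since each $\omega_{w'}$ has compact support and $\star$ is a finite sum, bounding $\vert I_{f,\psi}(g) \Delta^{1/2-\delta_n}(g) \vert$ on compact subsets of $G_n$ reduces to bounding $\vert K_e^{w'}(a) \Delta^{1/2-\delta_n}(a) \vert$ on $A_e^{w'}$ for every relevant $w'$; this is exactly Conjecture \ref{con}, and the second part of Section 3.3 already shows that Conjecture \ref{conj2} (the $w' = w_{G_n}$ case) implies Conjecture \ref{con}. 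Hence I need only treat $w' = w_{G_n}$.

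For $w' = w_{G_n}$, I invoke the explicit formula of Section 3.4: for $c = \diag(p^{a_1} v_1, p^{a_2-a_1} v_2, \ldots, p^{-a_{n-1}} v_n) \in A_e^{w_{G_n}}$ with $v_i \in \BZ_p^\times$, $a_i$ large, the germ equals a specific orbital integral of $\Phi = p^{n(n-1)m/2} \cdot 1_{w_{G_n} K_m}$, and the identification in Section 4 gives $K_e^{w_{G_n}}(c) = p^{-\frac{n(n-1)}{2}m} \cdot Kl_p(\psi_p^{-1}; c, w_{G_n})$. Next, a direct computation from the formula $\Delta(\diag(t_1,\ldots,t_n)) = \vert t_1^{2(n-2)} t_2^{2(n-3)} \cdots t_{n-2}^2 t_n^{-2} \vert$ in Reduction Step 2, applied with $t_i = p^{a_i - a_{i-1}} v_i$ (setting $a_0 = 0$, $a_n = 0$), yields the clean identity $\Delta^{1/2}(c) = p^{-(a_1 + a_2 + \cdots + a_{n-1})}$.

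Combining these two ingredients with Theorem \ref{thm: w_n}, which gives $\vert Kl_p(\psi_p^{-1}; c, w_{G_n}) \vert \leq C_n \cdot p^{(1 - \frac{1}{4n^2 - 18n + 22})(a_1 + \cdots + a_{n-1}) + \frac{n(n-1)}{2}m}$, the prefactor $p^{\frac{n(n-1)}{2}m}$ absorbs into a constant $C_n'$ and one obtains
\[
\bigl\vert K_e^{w_{G_n}}(c) \Delta^{1/2 - \delta_n}(c) \bigr\vert \leq C_n' \cdot p^{-\bigl(\frac{1}{4n^2 - 18n + 22} - 2\delta_n\bigr)(a_1 + \cdots + a_{n-1})}.
\]
Choosing any $\delta_n < \frac{1}{2(4n^2 - 18n + 22)}$, for instance $\delta_n = \frac{1}{8n^2 - 36n + 44}$, the exponent on the right is strictly negative, so the right-hand side is uniformly bounded as $a_1, \ldots, a_{n-1} \to \infty$. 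On the complementary region where some $a_i$ stay bounded, $K_e^{w_{G_n}}$ is locally constant and $\Delta^{1/2-\delta_n}$ is continuous, so boundedness on compact sets is automatic.

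The main obstacle is the inductive step and, more substantively, the passage from the pointwise Kloosterman bound back to boundedness of the convolution $K_e^{w_{G_n}} \star \omega_{w_{G_n}}$ multiplied by $\Delta^{1/2-\delta_n}$: one must check that the finitely many pairs $(b,c)$ with $a = bc$ in the definition of $\star$ do not accumulate factors of $\Delta$ that spoil the decay, which follows because $A_e^{w_{G_n}}$ consists of elements of determinant of absolute value $1$ and $\omega_{w_{G_n}}$ is compactly supported, so $\Delta(a) = \Delta(b) \cdot \Delta(c)$ with $\Delta(c)$ bounded above and below on the support of $\omega_{w_{G_n}}$. Once this bookkeeping is verified, the estimate above together with Proposition \ref{prop1} upgrades the result to the full Conjecture \ref{conj1} and, by Theorem \ref{transfer}, to the local integrability of $j_{\pi,\psi}$ on $G_n$.
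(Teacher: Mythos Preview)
Your proposal follows essentially the same route as the paper: reduce via the Jacquet--Ye germ expansion (Theorem \ref{germ}) and the inductive structure of the germs to the single case $w'=w_{G_n}$, identify $K_e^{w_{G_n}}$ with a normalized Kloosterman sum via Section 3.4, compute $\Delta^{1/2}(c)=p^{-(a_1+\cdots+a_{n-1})}$, and apply Theorem \ref{thm: w_n}. This is exactly what the paper does in Section 6.

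Two small imprecisions to fix. First, the bound $C_n$ in Theorem \ref{thm: w_n} is \emph{not} a constant: it contains the factors $(\ell+(n-1)m+1)^{n^2-1}$ and $((n-1)\ell+n)^{n^3/2}$, which grow polynomially in $\ell=\max(a_1,\ldots,a_{n-1})$. You cannot simply ``absorb'' this into a constant $C_n'$; instead one observes, as the paper does, that these polynomial factors are dominated by the strict exponential decay $p^{-(\frac{1}{4n^2-18n+22}-2\delta_n)(a_1+\cdots+a_{n-1})}$ provided that exponent is strictly positive. Second, your ``for instance $\delta_n=\frac{1}{8n^2-36n+44}$'' is the boundary value $\frac{1}{2(4n^2-18n+22)}$ itself, giving exponent zero and hence no decay at all against those polynomial factors; you must take $\delta_n$ strictly smaller (the paper states the result for $0<\delta<\frac{1}{8n^2-36n+44}$). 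With these corrections your argument is complete and matches the paper's.
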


According to the discussion in Section 3, we know that Conjecture \ref{conj6} is equivalent to the local integrability of Bessel functions for $\GL(n)$. Applying Jacquet-Ye's theory of the relative Shalkia germs (Theorem \ref{germ} and \cite{JY96},\cite{JY99}), we can reduce Conjecture \ref{conj1} to Conjecture \ref{conj2} from the discussion in Section 3. Now it suffices to prove Conjecture \ref{conj2}.

We will give the proof of Conjecture \ref{conj2}. Therefore, we prove the local integrability of Bessel functions for $\GL_n(\BQ_p)$. We state the following theorem.

\begin{thm}
The absolute value $\left \vert K_e^{w_{G_n}} \Delta^{\frac{1}{2}-\delta} \right \rvert$ is uniformly bounded on the set $A_e^{w_{G_n}}$ for any $\delta$ satisfying $0<\delta< \frac{1}{8n^2-36n+44}$.
\end{thm}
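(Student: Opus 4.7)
The plan is to combine the explicit realization of $K_e^{w_{G_n}}$ as a local Kloosterman sum from Section 3.4 (and the Remark after Section 4) with the nontrivial upper bound established in Theorem \ref{thm: w_n}, then match the resulting exponent against $\Delta^{\frac{1}{2}-\delta}$ computed along the torus.

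First I would parametrize $c \in A_e^{w_{G_n}}$ in the form used in Section 5, namely
\[
c = \diag(p^{a_1}v_1,\ p^{a_2-a_1}v_2,\ \ldots,\ p^{a_{n-1}-a_{n-2}}v_{n-1},\ p^{-a_{n-1}}v_n)
\]
with $v_i \in \BZ_p^\times$, $\prod v_i = \pm 1$ and nonnegative integers $a_i$. A direct calculation of the leading principal minors gives $\Delta_k(c) = v_1 \cdots v_k \cdot p^{a_k}$ for $k \leq n-1$ and $\Delta_n(c) = \pm 1$, so $|\Delta_k(c)|_p = p^{-a_k}$ and therefore
\[
\Delta(c) \;=\; p^{-2(a_1 + a_2 + \cdots + a_{n-1})}.
\]
Hence $\Delta^{\frac{1}{2}-\delta}(c) = p^{-(1-2\delta)(a_1+\cdots+a_{n-1})}$, which is the quantity against which I need to fight the growth of $K_e^{w_{G_n}}(c)$.

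Next I would invoke the formula recorded after Section 4,
\[
K_e^{w_{G_n}}(c) \;=\; p^{-\frac{n(n-1)}{2}m}\cdot Kl_p(\psi_p^{-1};c,w_{G_n}),
\]
valid once the $a_i$ are sufficiently large (this is the regime where the asymptotic analysis actually needs to be done; on the complementary region $\max_i a_i$ is bounded, $K_e^{w_{G_n}}$ is locally constant and $\Delta^{\frac{1}{2}-\delta}$ is bounded above, so uniform boundedness is automatic). Applying the second (uniform) inequality of Theorem \ref{thm: w_n} gives
\[
|K_e^{w_{G_n}}(c)| \;\leq\; C_n \cdot p^{-\frac{n(n-1)}{2}m}\cdot p^{(1-\frac{1}{4n^2-18n+22})(a_1+\cdots+a_{n-1})+\frac{n(n-1)}{2}m}
\;=\; C_n \cdot p^{(1-\frac{1}{4n^2-18n+22})(a_1+\cdots+a_{n-1})},
\]
where the normalization factor $p^{\pm \frac{n(n-1)}{2}m}$ cancels. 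Multiplying by $\Delta^{\frac{1}{2}-\delta}(c)$ yields
\[
\bigl|K_e^{w_{G_n}}(c)\,\Delta^{\frac{1}{2}-\delta}(c)\bigr| \;\leq\; C_n \cdot p^{(2\delta - \frac{1}{4n^2-18n+22})(a_1+\cdots+a_{n-1})}.
\]
The exponent is strictly negative precisely when $\delta < \frac{1}{2(4n^2-18n+22)} = \frac{1}{8n^2-36n+44}$, giving the uniform upper bound by the constant $C_n$ on the deep region, and together with the trivial bound on the compact complement, we conclude the claim.

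The substantive analytic work has already been carried out in Theorem \ref{thm: w_n}; what remains is bookkeeping. The only point that demands genuine care is verifying that the formula $K_e^{w_{G_n}} = p^{-\frac{n(n-1)}{2}m} Kl_p(\psi_p^{-1};\cdot,w_{G_n})$ applies uniformly on the cofinite region $\{c : \min_k a_k \gg m\}$ (so that the implied constant $C_n$ can be taken uniform), and that the factor-of-two discrepancy between the exponent $\frac{1}{4n^2-18n+22}$ in the Kloosterman bound and the exponent $2$ in the definition of $\Delta$ produces exactly the threshold $\frac{1}{8n^2-36n+44}$ stated in the theorem.
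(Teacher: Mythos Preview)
Your approach is essentially identical to the paper's: parametrize $c$ by the $a_i$, compute $\Delta(c)=p^{-2\sum a_i}$, invoke the identity $K_e^{w_{G_n}}(c)=p^{-\frac{n(n-1)}{2}m}\,Kl_p(\psi_p^{-1};c,w_{G_n})$, apply Theorem~\ref{thm: w_n}, and observe the resulting exponent $2\delta-\frac{1}{4n^2-18n+22}$ is negative. One small oversight: the quantity $C_n$ in Theorem~\ref{thm: w_n} is \emph{not} a uniform constant---it contains the factors $(\ell+(n-1)m+1)^{n^2-1}$ and $((n-1)\ell+n)^{n^3/2}$, which grow polynomially in $\ell=\max_i a_i$; the paper makes this explicit and then notes that polynomial growth in $\ell$ is dominated by the exponential decay $p^{-(\frac{1}{4n^2-18n+22}-2\delta)\sum a_i}$, so the product still tends to $0$ and is therefore bounded. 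Your argument needs the same one-line remark to be complete.
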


\begin{proof}
According to the definition of (relative) Shalika germ $K_e^{w_{G_n}}$ in \cite{JY96} and \cite{JY99}, for any element $g$ in the set $A_e^{w_{G_n}}$, without loss of generality, we can assume that $\vert \Delta_i(g) \rvert < \epsilon$, where $1 \leq i \leq n-1$ and $\epsilon$ is any arbitrary small positive real number (By the explicit formula in Section 3.4 and \cite[Proposition 2.6]{JY99}, for each $k \geq m'$, there is an inductive system of Shalika germs such that $K_e^{w_{G_n}}$ is only supported on the set $\vert \Delta_i(g) \rvert < p^{-k}$, where $1 \leq i \leq n-1$. By picking $k$ large enough, we can make the above assumption). Here $\Delta_i$ is the determinant of principal $i \times i$ minor of $g$ which we have already defined in Section 3. Moreover, as in Section 3 and 4, we know that $\Delta_n(g)=\det w_{G_n}=(-1)^{\frac{(n+1)(n+2)}{2}+1}$. Hence, we have $\vert \Delta_n(g) \rvert=1$.

Note that $\vert \Delta_i(g) \rvert < \epsilon$ for all $1 \leq i \leq n-1$, we can write $g=c \in T$. From above assumption, we let
$$
c:=\begin{pmatrix}
         p^{a_1}v_1 &  &  &  & \\
          & p^{a_2-a_1}v_2 &  &  & \\
          & & \cdots & & \\
          &  &  & p^{a_{n-1}-a_{n-2}}v_{n-1} &  \\
          &  &  &  & p^{-a_{n-1}}v_n
       \end{pmatrix},
$$
where $v_i \in \BZ_p^{\times}$ for $i=1,2,3,\cdots,n$. Here $a_1,a_2,\cdots,a_{n-1}$ are all large enough positive integers. Moreover, we have $a_1,a_2,\cdots,a_{n-1} >> m>2m'$ if we let positive real number $\epsilon$ to be small enough, where $m$ is a fixed positive integer in Section 4.

By direct calculation, we have
$$
\Delta(c):= \left \vert \frac{(\Delta_1(c))^2 \cdot (\Delta_2(c))^2 \cdots (\Delta_{n-1}(c))^2}{(\Delta_n(c))^2} \right \rvert=p^{-2(a_1+a_2+a_3+\cdots+a_{n-1})}.
$$
So we have $\Delta^{\frac{1}{2}-\delta}(g) = \Delta^{\frac{1}{2}-\delta}(c)=p^{-(1-2\delta)(a_1+a_2+a_3+\cdots+a_{n-1})}$.

Now applying Theorem \ref{thm: w_n}, we further assume that $a_1,a_2,\cdots,a_{n-1}>>m>2m'$ (For example, $a_j>100000(n-1)m$ for $j=1,2,\cdots,n-1$).
Therefore, we have
\begin{equation}
\begin{aligned}
\vert K_{e,\psi_p}^{w_{G_n}}(c) \rvert < \vert Kl_p(\psi_p^{-1} ;c,w_{G_n}) \rvert & \leq  2^{n^2-1} \cdot (\ell+(n-1)m+1)^{(n^2-1)} \cdot p^{(1-\frac{1}{4n^2-18n+22})\cdot(a_1+a_2+\cdots+a_{n-1})} \\
& \qquad\qquad\times p^{(n-1)m+2(n+3)(n-1)m} \times ((n-1) \ell+n)^{\frac{n^3}{2}} \times p^{\frac{n(n-1)}{2}m} \\
& <  2^{n^2-1} \cdot (\ell+(n-1)m+1)^{(n^2-1)} \cdot p^{(1-\frac{1}{4n^2-18n+22})\cdot(a_1+a_2+\cdots+a_{n-1})} \\
& \qquad\qquad\times p^{(n-1)m+3(n+3)(n-1)m} \times ((n-1) \ell+n)^{\frac{n^3}{2}} \\
& < 2^{n^2-1} \cdot (\ell+(n-1)m+1)^{(n^2-1)} \cdot p^{(1-\frac{1}{4n^2-18n+22})\cdot(a_1+a_2+\cdots+a_{n-1})} \\
& \qquad\qquad\times p^{3(n+4)(n-1)m} \times ((n-1) \ell+n)^{n^3},
\end{aligned}
\end{equation}
where $\ell=\max(a_1,a_2,a_3,\cdots,a_{n-1})$.

In conclusion, for any $g \in A_e^{w_{G_n}}$, we have
\begin{equation}
\begin{aligned}
\vert K_e^{w_{G_n}}(g)\Delta^{\frac{1}{2}-\delta}(g) \rvert& = \vert K_e^{w_{G_n}}(c)\Delta^{\frac{1}{2}-\delta}(c) \rvert \\
&\leq 2^{n^2-1} \cdot p^{3(n+4)(n-1)m} \cdot (\ell+(n-1)m+1)^{(n^2-1)} \cdot ((n-1) \ell+n)^{n^3} \\
& \qquad\times p^{-(\frac{1}{4n^2-18n+22}-2\delta)(a_1+a_2+a_3+\cdots+a_{n-1})}.
\end{aligned}
\end{equation}
Now since $0<\delta< \frac{1}{8n^2-36n+44}$, we have $0<\frac{1}{4n^2-18n+22}-2\delta<\frac{1}{4n^2-18n+22}$. This means that the positive real number
\[
2^{n^2-1} \cdot p^{3(n+4)(n-1)m} \cdot (\ell+(n-1)m+1)^{(n^2-1)} \cdot ((n-1) \ell+n)^{n^3} \cdot p^{-(\frac{1}{4n^2-18n+22}-2\delta)(a_1+a_2+a_3+\cdots+a_{n-1})}
 \]
 approaches to $0$ when $a_1,a_2,\cdots,a_{n-1}$ tends to infinity. Hence, it is bounded uniformly in terms of $a_1,a_2,\cdots,a_{n-1}$. This finishes the proof.

\end{proof}

\begin{rmk}
In our proof, we further assume that $\vert \Delta_i(g) \rvert < \epsilon$, where $1 \leq i \leq n-1$ and $\epsilon$ is any arbitrary small positive real number. Actually, in order to apply Theorem \ref{thm: w_n}, it is sufficient to pick $\epsilon=p^{-m}<p^{-m'}$, where $m$ is a positive integer defined in Section 4. Hence, $a_1,a_2,\cdots,a_{n-1}$ are all large enough positive integers. Moreover, we have $a_1,a_2,\cdots,a_{n-1} \geq m \geq 2m'$. The remaining part of the proof keeps the same. Intuitively, by Theorem \ref{germ}, the relative Shalika germ $K_e^{w_{G_n}}$ contributes to the asympototic behaviour of certain local orbital integral. By the definition of $K_e^{w_{G_n}}$ and the set $A_e^{w_{G_n}}$, we can assume that $\vert \Delta_i(g) \rvert < \epsilon$, where $1 \leq i \leq n-1$ and $\epsilon$ is any arbitrary small positive real number (See Remark \ref{gl4}(g) for an example on $\GL(4)$).
\end{rmk}

\section{Applications of Theorem \ref{inte}}

In this section, we will give some applications of our main Theorem \ref{inte}. We give the definition of Bessel distributions and will follow the notations in Section 2 and Section 3.

Let $\pi$ be a smooth irreducible generic representation of $G_n=\GL_n(\BQ_p)$ with contragredient $\widetilde{\pi}$. We use $\pi^{*}$ and $\widetilde{\pi}^{*}$ to denote the linear dual of $\pi$ and $\widetilde{\pi}$ respectively. Let $f \in C_c^{\infty}(G_n)$ be a locally constant function with compact support on $G_n$. We take $l \in \pi^{*}$ and $l' \in \widetilde{\pi}^{*}$ to be fixed nonzero Whittaker functionals with repect to the non-degenerate additive character $\psi$ and $\psi^{-1}$, respectively. We define $\widetilde{\pi}(f)l'$ as
$$ \widetilde{\pi}(f)l':= \int_{G_n} f(g) \widetilde{\pi}(g)l' dg$$
or equivalently, for any $\widetilde{v} \in \widetilde{\pi}$, we have
\begin{equation}
\begin{aligned}
\langle \widetilde{\pi}(f)l', \widetilde{v} \rangle &= \int_{G_n} f(g) \langle \widetilde{\pi}(g)l', \widetilde{v} \rangle dg \\
&= \int_{G_n} f(g) \langle l', \widetilde{\pi}(g^{-1})(\widetilde{v}) \rangle dg.
\end{aligned}
\end{equation}
Then $\widetilde{\pi}(f)l'$ is a smooth linear functional on $\widetilde{\pi}$, hence can be identified with a vector $v_f \in \pi$.

\begin{defn}
We define Bessel distribution $B(f)$ as
$$ B(f):= l(v_f).$$
\end{defn}
E. M. Baruch obtained the first regularity result about the Bessel distribution $B(f)$. By Theorem 2.3 in \cite{Ba01}, when restricted to the open Bruhat cell $\Omega=B w_{G_n} B$, this Bessel distribution $B(f)$ is given by integration against a locally constant kernel function $j_{0,\pi}(g)$ on $\Omega$, which is called the relative Bessel function. In other words, for any $f \in C_c^{\infty}(\Omega)$, we have $$ B(f)= \int_{\Omega} j_{0,\pi}(g) f(g) dg.$$ We can extend this function $j_{0,\pi}(g)$ to the whole group $G_n$ by letting zero when $g \in G_n - \Omega=G_n- B w_{G_n} B$. By Theorem 1.1 in \cite{Chai19a} and Theorem 7.2 in \cite{Chai17}, we know that $j_{0,\pi}(g)=j_{\pi}(g)$ for all $g \in \Omega$ after certain normalizations on $l$ or $l'$ (See Theorem 1.1, 3.2 in \cite{Chai19a} and Lemma 3.2 in \cite{Chai17}).

We have the following regularity theorem, which naturally connects the Bessel function $j_{\pi}(g)$ to the above Bessel distribution $B(f)$.

\begin{prop}  \label{kernel}
The Bessel distribution $B(f)$ is given by integration against the Bessel function $j_{\pi}(g)$ on $G_n$ (See Section 2), that is, for any $f \in C_c^{\infty}(G_n)$, we have
$$ B(f)= \int_{G_n}  j_{\pi}(g) f(g) dg.$$
\end{prop}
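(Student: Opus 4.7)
The plan is to recognise both sides of the identity as distributions on $G_n$ and reduce the global equality to the case of functions supported in the open cell $\Omega := B w_{G_n} B$. By Theorem \ref{inte}, $j_\pi$ (extended by zero off $\Omega$) is locally integrable on $G_n$, so the assignment
\[
\widetilde{B}(f) := \int_{G_n} j_\pi(g) f(g)\, dg
\]
defines a distribution on $G_n$. Both $B$ and $\widetilde{B}$ are $(N \times N)$-equivariant with the same character: the Whittaker properties of the chosen functionals $l, l'$ give $B(\lambda(n_2) \rho(n_1) f) = \psi(n_2) \psi^{-1}(n_1) B(f)$, while the transformation law $j_\pi(n_1 g n_2) = \psi(n_1 n_2) j_\pi(g)$ yields the same identity for $\widetilde{B}$. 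By Baruch's Theorem 2.3 of \cite{Ba01} combined with Chai's identification $j_{0,\pi} = j_\pi$ on $\Omega$ (after normalising $l$ or $l'$ as in \cite{Chai17, Chai19a}), the two distributions coincide on the subspace $C_c^\infty(\Omega) \subset C_c^\infty(G_n)$.

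Consequently $D := B - \widetilde{B}$ is an $(N \times N, \psi \otimes \psi^{-1})$-equivariant distribution on $G_n$ supported on the complement $G_n \setminus \Omega$, which is a proper closed subvariety of Haar measure zero. To conclude I have to show that $D \equiv 0$. The approach I would take is local: by Theorem \ref{transfer}, each $x_0 \in G_n$ admits a neighbourhood $U_{x_0}$ and a test function $\phi \in C_c^\infty(G_n)$ such that $j_\pi(g) = J_{\phi, \omega_\pi, \psi}(g)$ on $U_{x_0}$. For $f \in C_c^\infty(U_{x_0})$ one can then unfold $\widetilde{B}(f)$ as an iterated integral against $\phi$; on the other hand, a direct computation of $B(f) = l(\widetilde{\pi}(f) l')$, using the Whittaker realisations of $l$ and $l'$ and a Fubini-type interchange, produces the same iterated integral. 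A partition-of-unity argument on the compact support of a general $f \in C_c^\infty(G_n)$ then glues these local identities into the global equality $B = \widetilde{B}$.

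The crux lies in the Fubini-type interchange of the previous step: the integrals defining $B$ and the orbital-integral representation of $j_\pi$ are each only naturally \emph{stably} convergent in the sense of Definition \ref{sta1}, and their absolute convergence -- required to swap the orders of integration -- is exactly what Theorem \ref{inte} provides. Equivalently, local integrability of $j_\pi$ (and of the relative Shalika germs $K_e^{w'}$ appearing in Theorem \ref{germ}) forces $\widetilde{B}$, and hence $B$, to be absolutely continuous with respect to Haar measure, which rules out the potential nonzero contributions to $D$ coming from $(N \times N)$-equivariant distributions supported on the non-open relevant Bruhat cells $B w' B$, $w' \neq w_{G_n}$. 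Without the local integrability established in Section 6, this step would collapse and the identity could only be asserted on $\Omega$; with Theorem \ref{inte} in hand, however, the identity extends unambiguously to all of $G_n$.
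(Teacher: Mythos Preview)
Your setup through the reduction to $D := B - \widetilde{B}$ supported on $G_n \setminus \Omega$ matches the paper. The divergence is in how you conclude $D = 0$. The paper does not attempt any direct unfolding of $B$; it invokes Theorem~A of \cite{AGS15} (the wave-front set of $D$ lies in $(G_n \setminus \Omega) \times \CN$, with $\CN$ the nilpotent cone) and then Corollaries~B,~C of \cite{AGK15} (no nonzero $(N\times N,\psi)$-equivariant distribution can have such a wave-front set) to force $D=0$.

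Your alternative route has a gap at the decisive step. Theorem~\ref{transfer} and Theorem~\ref{inte} control $\widetilde{B}$: they let you write $\widetilde{B}(f)=\int J_{\phi,\omega_\pi,\psi}(g)f(g)\,dg$ locally and they guarantee absolute convergence of that integral. They say nothing directly about $B$. The assertion that ``a direct computation of $B(f)=l(\widetilde{\pi}(f)l')$ \ldots\ produces the same iterated integral'' is precisely the content of the proposition: what is needed is that the abstractly defined Bessel distribution is absolutely continuous with respect to Haar measure near the boundary, and no Whittaker realisation plus Fubini gives this for free, because the integrals you would want to interchange to pass from $l(\widetilde{\pi}(f)l')$ to an orbital integral are only stably convergent. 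Your last paragraph makes this circularity explicit: ``local integrability of $j_\pi$ \ldots\ forces $\widetilde{B}$, and hence $B$, to be absolutely continuous'' --- the first clause is a tautology, and the ``hence'' is exactly what must be shown. The obstruction is genuine: a priori there may exist nonzero $(N\times N,\psi\otimes\psi^{-1})$-equivariant distributions supported on the lower Bruhat cells, and ruling them out is what the wave-front-set machinery of \cite{AGS15} and \cite{AGK15} accomplishes and what your argument does not supply.
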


\begin{proof}
For $f \in C_c^{\infty}(G_n)$, we define the following distribution
$$ B_1(f):= \int_{G_n} j_{\pi}(g) f(g) dg.$$
Note that the Bessel function $j_{\pi}(g)$ is local integrable on $G_n$ (Main Theorem \ref{inte}), this distribution is well defined. Now we consider the distribution $\widetilde{B}(f):=B(f)-B_1(f)$. By Theorem 2.3 in \cite{Ba01}, Theorem 1.1 in \cite{Chai19a} and Theorem 7.2 in \cite{Chai17}, the restriction of this distribution $\widetilde{B}(f)$ to the open Bruhat cell $\Omega= B w_{G_n} B$ is zero. Therefore, this distribution is supported on $G_n- \Omega=G_n- B w_{G_n} B$. Now by Theorem A in \cite{AGS15}, the wave-front set of the distribution $\widetilde{B}= B-B_1$ is contained in $(G_n- \Omega) \times \CN$, where $\CN$ is the set of nilpotent elements (cone) in the dual of Lie algebra of $G_n$. Hence, by Corollary B and C in \cite{AGK15}, we have $\widetilde{B} \equiv B-B_1 \equiv 0$, which gives that $B \equiv B_1$. This finishes the proof.
\end{proof}

\begin{rmk}
The regularity theorem (Proposition \ref{kernel}) may have applications in the study of relative trace formulae (especially Kuznetsov trace formula). Moreover, the regularity theorem is expected to hold for general $G(F)$, where $F$ is a general $p$-adic local field and $G$ is a connected split reductive group.
\end{rmk}

We will give an application of above Proposition \ref{kernel}.

\begin{cor}
Let $\pi_1$ and $\pi_2$ be two generic smooth irreducible representations of $G_n$, with the corresponding Bessel functions $j_{\pi_1}(g)$ and $j_{\pi_2}(g)$. If there exists a non-zero constant $c$, such that for any $g \in G_n$, we have $j_{\pi_1}(g)=c j_{\pi_2}(g)$. Then $\pi_1 \cong \pi_2$.
\end{cor}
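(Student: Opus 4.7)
\emph{Proof plan.} The plan is to use Proposition \ref{kernel} to convert the pointwise identity $j_{\pi_1} = c\, j_{\pi_2}$ into an equality of Bessel distributions on $G_n$, and then to argue that the assignment $\pi \mapsto B_\pi$ is injective on isomorphism classes of generic irreducible smooth representations. First, I will integrate the hypothesis against an arbitrary test function $f \in C_c^\infty(G_n)$; Proposition \ref{kernel}, which in turn rests on the local integrability of $j_\pi$ (Theorem \ref{inte}), will yield
\[
  B_{\pi_1}(f) \;=\; \int_{G_n} j_{\pi_1}(g) f(g)\, dg \;=\; c \int_{G_n} j_{\pi_2}(g) f(g)\, dg \;=\; c\, B_{\pi_2}(f)
\]
for every $f \in C_c^\infty(G_n)$, so that $B_{\pi_1} = c\, B_{\pi_2}$ as distributions on $G_n$.

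Next, I will unwind $B_\pi(f) = l(\wt\pi(f)\, l')$ in the Whittaker realization, taking $\pi \cong \CW(\pi,\psi)$ with Whittaker functional $W \mapsto W(e)$ and using uniqueness (Theorem \ref{uniWhi}). The functional $f \mapsto B_\pi(f)$ should then encode the action of the Hecke algebra on a normalized Whittaker vector: for $W^\circ \in \CW(\pi,\psi)$ with $W^\circ(e)=1$, I expect to identify $B_\pi(f)$ with a suitably regularized bilinear pairing on $N\bs G_n$ recovering all matrix coefficients of $\pi$ in the Whittaker model. From $B_{\pi_1} = c\, B_{\pi_2}$, this will force the Hecke algebra actions on $\CW(\pi_1,\psi)$ and $\CW(\pi_2,\psi)$ to agree up to a scalar, and Jacquet--Shalika uniqueness of the Whittaker model will deliver $\pi_1 \cong \pi_2$ (and incidentally $c=1$ after a natural normalization).

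The hard part will be making this regularization precise, because the natural $N\bs G_n$-pairing does not converge absolutely for non-tempered $\pi$; handling it may require stable integrals in the sense of Definition \ref{sta1}, or a passage through the mirabolic subgroup where Rankin--Selberg convergence holds. As a more robust alternative, I would instead show via a Mellin transform of $j_\pi(w_{G_n}\, t)$ along $t \in T$ that $j_\pi$ determines all the local gamma factors $\gamma(s, \pi \times \tau, \psi)$ for generic irreducible $\tau$ on $\GL_k(\BQ_p)$, $1 \le k \le n-1$; the proportionality $j_{\pi_1} = c\, j_{\pi_2}$ would then force equality of these gamma factors, and the Jacquet--Piatetski-Shapiro--Shalika local converse theorem would give $\pi_1 \cong \pi_2$.
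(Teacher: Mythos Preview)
Your first step --- integrating the pointwise identity against a test function and invoking Proposition \ref{kernel} to obtain $B_{\pi_1} = c\, B_{\pi_2}$ as distributions --- is exactly what the paper does. The divergence is in the second step: the paper finishes in one line by citing Lemma 2.2(2) of \cite{FLO12}, which asserts that Bessel distributions attached to inequivalent generic irreducible representations are linearly independent. So $B_{\pi_1} = c\, B_{\pi_2}$ with $c \neq 0$ immediately forces $\pi_1 \cong \pi_2$.

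You instead propose to \emph{reprove} that injectivity. Your second route (Mellin transform of $j_\pi$ along the torus to recover the Rankin--Selberg $\gamma$-factors, then the local converse theorem) is a valid strategy and is in fact the circle of ideas in \cite{Chai19b}, but it is vastly heavier than what is needed here and would require a substantial argument to make rigorous. Your first route (unwinding $B_\pi(f)$ in the Whittaker model to recover matrix coefficients) is, as you yourself note, not yet a proof: the regularization issues you flag are genuine, and the sketch does not make clear how you pass from a scalar identity of distributions to an intertwining of the two Hecke-module structures. Neither route is wrong in principle, but both are large detours around a result that is already available off the shelf in \cite{FLO12}.
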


\begin{proof}
By Lemma 2.2 part (2) in \cite{FLO12}, the Bessel distributions are linear independent for two inequivalent smooth irreducible generic representations. This Corollary now follows directly from above Proposition \ref{kernel}.
\end{proof}
We recall that $N= N_n$ is the standard unipotent radical subgroup of $G_n$ and $w_{G_n}$ is the longest Weyl element in $G_n$. We embed $G_{n-1}$ into $G_n$ on the upper and left corner by the map $g_{n-1} \rightarrow \begin{pmatrix} g_{n-1} & \\ &1 \end{pmatrix}$. We have the following kernel formula.

\begin{cor}  \label{kerformula}
Let $\pi$ be a generic smooth irreducible representation of $G_n$ with the corresponding Bessel function $j_{\pi}(g)$. If $W_v(g)$ is a Whittaker function in the Whittaker model $\CW(\pi,\psi)$, then for any diagonal matrix $b= \diag(b_1,\cdots, b_n) \in A_n \subseteq G_n$ and $W_v \begin{pmatrix} h & \\ & 1 \end{pmatrix} \in C_c^{\infty}(N_{n-1} \bs G_{n-1}, \psi)$, we have
$$  W_v(b w_{G_n})= \int_{N_{n-1} \bs G_{n-1}} j_{\pi} \left( b w_{G_n} \begin{pmatrix} h^{-1} & \\ & 1 \end{pmatrix} \right) W_v \begin{pmatrix} h & \\ & 1 \end{pmatrix} dh.$$
\end{cor}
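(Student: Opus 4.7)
The plan is to realize both sides of the kernel formula as the Bessel distribution $B$ evaluated on a common test function, and then invoke Proposition \ref{kernel} to equate them.

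I would introduce an approximate-identity family $f_v^{(m)} \in C_c^{\infty}(G_n)$ built from shrinking principal congruence subgroups $K_m \subset N_n$. For $g$ on the open Bruhat chart written as $g = n_1 \cdot bw_{G_n}\diag(h^{-1},1) \cdot n_2$ with $n_1, n_2 \in N_n$ and $h$ ranging over the (compact) support of $h\mapsto W_v\diag(h,1)$, set
$$
f_v^{(m)}(g) \;=\; \vol(K_m)^{-2}\, \psi^{-1}(n_1 n_2)\, W_v\diag(h,1)\, \mathbf{1}_{K_m}(n_1)\mathbf{1}_{K_m}(n_2),
$$
and $0$ elsewhere. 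The phase $\psi^{-1}(n_1 n_2)$ is designed to cancel against the bi-$\psi$-equivariance of $j_\pi$ on the open Bruhat cell.

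On one side, using $j_{\pi}(n_1 g n_2) = \psi(n_1 n_2) j_{\pi}(g)$ for $g \in Bw_{G_n}B$ (Section 2), the integral $\int_{G_n} j_\pi(g)\, f_v^{(m)}(g)\, dg$ collapses \emph{exactly} (independently of $m$) to
$$
\int_{N_{n-1}\bs G_{n-1}} j_\pi(bw_{G_n}\diag(h^{-1},1))\, W_v\diag(h,1)\,dh,
$$
which is the right-hand side of the corollary. By Proposition \ref{kernel}, this common value equals $B(f_v^{(m)})$. On the other side, $B(f_v^{(m)}) = l(v_{f_v^{(m)}})$ with $v_{f_v^{(m)}} = \widetilde{\pi}(f_v^{(m)}) l'$; unwinding this definition on the same Bruhat chart, and using the $\psi^{-1}$-equivariance of $l'$ under $N_n$ to cancel the phase in $f_v^{(m)}$, the $n_1, n_2$-integrations contribute volume factors that match $\vol(K_m)^2$, and the remaining $h$-integral reconstructs $W_v(bw_{G_n})$ in the limit $m\to\infty$ by smoothness of $\pi$ and the compact support hypothesis on $h\mapsto W_v\diag(h,1)$.

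The main obstacle will be the $m\to\infty$ limit on the Bessel-distribution side. Since the $\int_{G_n} j_\pi \cdot f_v^{(m)}\,dg$ side is $m$-independent by the bi-equivariance computation above, the entire burden is showing that $l(v_{f_v^{(m)}}) \to W_v(bw_{G_n})$. This relies on the local integrability of $j_\pi$ (our Theorem \ref{inte}, which ensures Proposition \ref{kernel} and the formal calculation above are legitimate), local constancy of $W_v$ and of matrix coefficients of $\pi$ in a neighborhood of $bw_{G_n}$, and a careful unwinding of the double integral defining $\widetilde{\pi}(f_v^{(m)}) l'$. Alternatively, one may bypass the approximate identity by arguing directly that the two functionals $v \mapsto W_v(bw_{G_n})$ and $v \mapsto \int_{N_{n-1}\bs G_{n-1}} j_\pi(bw_{G_n}\diag(h^{-1},1)) W_v\diag(h,1)\, dh$ agree on the subspace $V_0 \subset V$ of vectors with compactly supported Kirillov restriction, via a Fourier-inversion argument on $N_n \times N_n$ combined with the uniqueness-type statement underlying the Bessel distribution.
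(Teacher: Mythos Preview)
Your approach has a genuine structural gap. The parametrization you propose, $(n_1,h,n_2)\mapsto n_1\cdot bw_{G_n}\diag(h^{-1},1)\cdot n_2$ with $n_1,n_2\in N_n$ and $h\in N_{n-1}\bs G_{n-1}$, is not a chart on the open Bruhat cell: the dimension count gives
\[
\tfrac{n(n-1)}{2}+\tfrac{n(n-1)}{2}+\dim(N_{n-1}\bs G_{n-1})=\tfrac{3n(n-1)}{2},
\]
which equals $n^2=\dim G_n$ only when $n=3$. For $n\geq 4$ the map is overdetermined (many $(n_1,h,n_2)$ give the same $g$), so your formula for $f_v^{(m)}(g)$ is not well-defined as a function on $G_n$, and the ``exact collapse'' of $\int_{G_n} j_\pi f_v^{(m)}$ to the right-hand side of the corollary cannot be carried out as written. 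Even if one repaired the coordinates, the claim that $l(v_{f_v^{(m)}})\to W_v(bw_{G_n})$ as $m\to\infty$ is precisely the substantive content of the kernel formula and is not established by the smoothness/compact-support remarks you give; unwinding $\widetilde\pi(f_v^{(m)})l'$ would in effect reprove the weak kernel formula from scratch.

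The paper takes a much shorter route. The identity itself is already known as Chai's \emph{weak kernel formula} \cite[Theorem~4.2]{Chai17}, where the right-hand side is understood as a suitably interpreted (iterated/stable) integral. The only new input needed here is absolute convergence of the $N_{n-1}\bs G_{n-1}$-integral, and that follows from the local integrability of $j_\pi$ (Theorem~\ref{inte}) by the argument of \cite[Lemma~5.3]{Ba04}. So the paper's proof is: local integrability $\Rightarrow$ absolute convergence of the RHS, and then cite Chai. Your Proposition~\ref{kernel} is not needed for this corollary.
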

\begin{proof}
By applying the local integrability of the Bessel function $j_{\pi}(g)$ and using the same method in Lemma 5.3 of \cite{Ba04}, we can show that the right hand side of above integral is absolutely convergent. This corollary now follows directly from Theorem 4.2 (the weak kernel formula) in \cite{Chai17}.
\end{proof}

\begin{rmk}
The kernel formula in the above Corollary \ref{kerformula} actually gives the action of the longest Weyl element $w_{G_n}$ on the Kirillov model of $\pi$.
\end{rmk}

\section{Acknowledgement}
I would like to thank my advisor Professor D. Jiang for suggesting me thinking about this interesting problem, providing fruitful comments and suggestions that lead to the solution of the problem, and carefully reviewing the first draft of this paper. I would like to thank Professor A. Diaconu for very helpful discussions and answering my several questions on the (twisted) $\GL(2)$ Kloosterman sums in Section 4. I would like to thank Professor V. Blomer, Professor J. Buttcane and Professor H. Jacquet for many useful comments and corrections. I also acknowledges my doctoral school University of Minnesota and Max-Planck-Institut {f\"{u}r} Mathematik where most of the work has been done. Moreover, I would like to thank the anonymous referee for careful and detailed reading and comments to improve the presentation and fix several inaccurate statements of my paper. The work is supported in part through the NSF Grant: DMS-1901802 of Professor D. Jiang.

\appendix

\section{$\GL(4)$ Kloosterman Sums}

In this Appendix, we follow \cite{Ste87} to bound the $\GL(4)$ Kloosterman sums introduced in the previous section. The main ideas and ingredients of the proof keep the same as that of Section 5. By a more careful and delicate estimation, the bound we get in Theorem \ref{thm: w_8} for $\GL(4)$ is slightly stronger than that in Theorem \ref{thm: w_n}. We apply the results in Section 4 for $n=4$.

For $w\in W_{G_4}$, we recall that $w(j)$, $j\in\{1,2,3,4\}$ is given by the formula
\[
  w\cdot e_j =  e_{w(j)},
\]
where $e_1,e_2,e_3,e_4$ is the standard basis of column vectors.
We recall the definition of non-degenerated additive character $\psi_p$ on $N(\mathbb{Q}_p)$ which is trivial on $N(p^m \mathbb{Z}_p)$ as follows:
\begin{equation}\label{eqn: psi}
      \psi_p\left(\begin{pmatrix} 1&u_1&*&*\\ &1&u_2&*\\ &&1&u_3\\ &&&1\end{pmatrix}\right) = \xi( u_1+ u_2+ u_3).
\end{equation}
Here $m$ is same as the $m$ that we defined in previous Section 3 and 4. The definition of additive character $\psi_p'$ is given in a similar way.

Fix
\begin{equation}\label{eqn: c1}
  \widetilde{c}=\diag( p^{s}v_4, p^{r-s}v_3, p^{t-r}v_2, p^{-t}v_1) \in T,
\end{equation}
where $v_i \in \BZ_p^{\times}$ for $i=1,2,3,4$ and $v_1v_2v_3v_4=1$. Moreover, we further assume that $t,r,s \geq m$, where $m$ is a fixed positive integer.

\begin{thm}\label{thm: w_8}
  Let $Kl_p(\psi_p ;c,w_{G_4})$ be the local Kloosterman sum attached to the longest Weyl element $w_{G_4}$.
  Let $\psi_p$ be as in \eqref{eqn: psi}, $\ell=
  \max(r,s,t) \geq m$, $\varrho=\max(t,s)$, $\sigma=\min(t,s)$, and
\begin{equation*}
\begin{aligned}
 C_8 &:= 8  p^{9m} \cdot ( p^{2m},p^{\ell+m})^{1/2} ( p^{2m},p^{\ell+m})^{1/2} ( p^{2m},p^{\ell+m})^{1/2} \\
& \cdot (\ell+m+1)^3 (\varrho+m+1)(r+m+1)^2(\sigma+m+1)^2 \\
&= 8  p^{9m} \cdot p^m \cdot p^m \cdot p^m \\
& \cdot (\ell+m+1)^3 (\varrho+m+1)(r+m+1)^2(\sigma+m+1)^2 \\
&= 8  p^{12m} \cdot (\ell+m+1)^3 (\varrho+m+1)(r+m+1)^2(\sigma+m+1)^2.
\end{aligned}
  \end{equation*}
  Then
  \begin{equation}\label{eqn:Kl-w8}
    \begin{split}
      |Kl_p(\psi_p ;\tilde{c},w_{G_4})|
      & \leq
        C_8 \cdot \min(p^{r+\sigma+\varrho/2+3m},p^{\varrho+3\sigma/2+r/2+3m}).
    \end{split}
  \end{equation}
  In particular, we have $|Kl_p(\psi_p ;\tilde{c},w_{G_4})|\leq C_8 \cdot p^{7(t+r+s)/8+3m}.$
\end{thm}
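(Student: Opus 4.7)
The plan is to mirror the proof of Theorem \ref{thm: w_n} in the specific case $n = 4$, but with a sharper count of orbit sizes so as to gain an extra factor of $p^{-\sigma/2}$ over the general estimate. After applying the involution $\iota: g \mapsto w_{G_4}(g^{t})^{-1}w_{G_4}$ to assume without loss of generality that $t \geq s$ (so that $\varrho = t$ and $\sigma = s$), I invoke Lemma \ref{lemma: X(cw_G)} to decompose
\[
Kl_p(\psi_p;\tilde{c},w_{G_4}) = \sum_{\{b_{i,i+1}\},\{c_{i,i+1}\}} S_{\{b_{i,i+1}\},\{c_{i,i+1}\}}(\psi_p;\tilde{c},w_{G_4}),
\]
and apply Lemma \ref{lemma:S<<wn} to each piece, reducing the problem to controlling the orbit size $\#X_{\{b_{i,i+1}\},\{c_{i,i+1}\}}(w_{G_4}\tilde{c})$ with the Weil weight $p^{-(b_{1,2}+b_{2,3}+b_{3,4})/2}$.

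The orbit size is the number of admissible $\{b_{i,j},c_{i,j}\}_{j-i \geq 2}$ subject to the subdeterminant congruences of Lemma \ref{lemma: stevens2}. These are most transparent on the upper-right $k \times k$ blocks $M_1, M_2, M_3$ of $u'(x) \pmod{N(p^{m}\BZ_p)}$, their nested bottom-right submatrices $M_{j,k}$, and the auxiliary submatrices $U_0, U_1, U_2$ of $M_3$: they force $\pm p^{a_k}(\prod v_i)\lvert M_k \rvert \in 1 + p^{m}\BZ_p$ and $p^{a_k}\lvert M_{j,k}\rvert,\ p^{t}\lvert U_j\rvert \in p^{m}\BZ_p$. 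By modifying $x$ by a suitable element of $N(p^{m}\BZ_p)$ (boundedly many-to-one at each step) I arrange that the $p$-adic valuations of these determinants lie in a controlled range, and then count the free parameters by induction exactly as in the proof of Theorem \ref{thm: w_n}.

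I then split the argument by the size of $\Sigma := b_{1,2}+b_{2,3}+b_{3,4}$ and of the partial sums $b_{1,3}+b_{2,4}$ and $b_{1,4}$ relative to $t, r, s$. When $\Sigma$ exceeds $\ell = \max(r,s,t)$, the crude bound $\#X(w_{G_4}\tilde{c}) \leq p^{12m}O_{f_0}(\tilde{c})$ from Theorem \ref{DR1998} combined with the Weil savings $p^{-\Sigma/2}$ yields the first bound $p^{r+\sigma+\varrho/2+3m}$; when $\Sigma \leq \ell$, the direct parameter count above also produces this first bound. The second bound $p^{\varrho+3\sigma/2+r/2+3m}$, which wins when $r$ dominates, requires a further refinement: I use the $U_0, U_1, U_2$ submatrix analysis to count $(c_{2,3}, c_{2,4}, c_{3,4})$ more sharply, splitting according as $b_{2,4} \leq s$ or $b_{2,4} > s$, thereby saving an additional factor $p^{-\sigma/2}$ over the naive count that one would read off from Theorem \ref{thm: w_n} specialized at $n=4$.

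The main obstacle is precisely this last refinement, where one must simultaneously respect the $U_j$ constraints and the $M_{j,k}$ constraints in the counting of $\{c_{i,j}\}_{j-i \geq 2}$; this is the "more careful and delicate" step announced in the Appendix introduction. Once both bounds are in hand and summed over the $O((\ell+m+1)^3)$ triples $(b_{1,2},b_{2,3},b_{3,4})$, the polynomial factors $(\varrho+m+1)(r+m+1)^2(\sigma+m+1)^2$ record the ranges of the auxiliary parameters and assemble into the stated constant $C_8$. Finally, the "in particular" assertion $|Kl_p| \leq C_8\, p^{7(r+s+t)/8 + 3m}$ follows from the elementary inequality $\min(r+s+t/2,\ t+3s/2+r/2) \leq 7(r+s+t)/8$, which one verifies separately in the two regimes $r \leq s + t$ and $r > s + t$.
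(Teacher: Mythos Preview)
Your outline follows the same overall strategy as the paper's, but the paper executes it in explicit coordinates rather than through the abstract $M_k$, $M_{j,k}$, $U_j$ machinery of Section~5. The Appendix writes out a single $4\times 4$ matrix identity parametrizing $u'(x)$ by $x=p^{-a}x',\ y=p^{-b}y',\ z=p^{-c}z',\ u=p^{-d}u',\ v=p^{-f}v',\ w=p^{-s}w'$, extracts ten explicit congruences (Properties (1)--(10)) from $K_m$-membership, and proves dedicated versions of Lemmas~\ref{lemma: X(cw_G)} and~\ref{lemma:S<<wn} in these coordinates.

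The decisive case analysis also differs from yours. For the first bound (case $t\geq r$), the paper splits into four subcases by the signs of $a+b+c-t$ and $d+f-r$, using the congruences $\mu,\lambda\in 1+p^m\BZ_p$ to cap $\#(u',v',w')$ whenever one overshoots. For the second bound (case $r>t$), the paper splits on $f\lessgtr t$, \emph{not} on $b_{2,4}\lessgtr s$ as you propose; in the subcase $f>t$ Properties~(6)--(7) force $b+c=f$ and $a+f\leq r$, and the $\mu$-congruence then caps $\#(u',v')$ by $p^{d+f-(a+f-t)+2m}$, yielding $p^{t+3s/2+r/2}$ after recombining with the Weil weight $p^{-(a+b+c)/2}$. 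Your phrase ``count $(c_{2,3},c_{2,4},c_{3,4})$ more sharply'' is a slip: $c_{2,3}$ and $c_{3,4}$ are superdiagonal and already fixed in the orbit $X_{\{b_{i,i+1}\},\{c_{i,i+1}\}}$; the free parameters are $(d,f,u',v',w')$ in the paper's notation, i.e.\ $(b_{1,3},b_{2,4},c_{1,3},c_{2,4},c_{1,4})$ in yours.

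The final elementary inequality is handled as you say, by comparing the two arguments of the minimum (they cross exactly at $r=\sigma+\varrho$).
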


We start with the following observation of the matrix identity:

\begin{equation}  \label{equ:identity}
\begin{aligned}
g_0:= & \begin{pmatrix}
  \frac{-a_1}{xyz-xv-uz+w} & & & \\
  \frac{a_2(yz-v)}{uv-wy} & \frac{a_2(xyz-xv-uz+w)}{uv-wy} & & \\
  \frac{-a_3 v}{w} & \frac{a_3 (w-xv)}{w} & \frac{a_3 (wy-uv)}{w} & \\
  a_4 & a_4 x & a_4 u & a_4 w
 \end{pmatrix} \\
= & \begin{pmatrix}
    1 & u_1 & u_4 & u_6 \\
      & 1 & u_2 & u_5 \\
      & & 1 & u_3 \\
     & & & 1
    \end{pmatrix}
 \cdot
 \begin{pmatrix}
   & & & 1 \\
   & & 1 & \\
   & 1 & & \\
   1 & & &
   \end{pmatrix}
\cdot
    \begin{pmatrix}
    a_4 & & &  \\
    & a_3 & &  \\
    & & a_2 &  \\
    & & & a_1
    \end{pmatrix}
\cdot
   \begin{pmatrix}
    1 & x & u & w \\
      & 1 & y & v \\
      & & 1 & z \\
     & & & 1
    \end{pmatrix},
\end{aligned}
\end{equation}
where
$$
u_1= \frac{a_1(u-xy)}{a_2(xyz-xv-uz+w)},\;\;\; u_2= \frac{a_2(w-uz)}{a_3(uv-wy)},
$$
\[
u_3= - \frac{a_3 v}{a_4 w},\;\;\;u_4= \frac{a_1 x}{a_3(xyz-xv-uz+w)},
\]
and
$$ u_5=\frac{a_2(yz-v)}{a_4(uv-wy)}, \;\;\; u_6= - \frac{a_1}{a_4(xyz-xv-uz+w)}.$$

Now, using the notation above, we let
$$  \begin{pmatrix}
    a_4 & & &  \\
    & a_3 & &  \\
    & & a_2 &  \\
    & & & a_1
    \end{pmatrix}
= \widetilde{c}=
    \begin{pmatrix}
         p^{s}v_4 &  &  &  \\
          & p^{r-s}v_3 &  &  \\
          &  & p^{t-r}v_2 &  \\
          &  &  & p^{-t}v_1
   \end{pmatrix},
$$
i.e. $a_1 =p^{-t}v_1$, $a_2 =p^{t-r}v_2$, $a_3 =p^{r-s}v_3$, $a_4v=p^sv_4$. Note that $v_1v_2v_3v_4=1$ and $v_i \in \BZ_p^{\times}$ for $1 \leq i \leq 4$.

From the definition of the set $X(w_{G_4} \widetilde{c})$, we can assume that the element
$$ \begin{pmatrix}
    1 & x & u & w \\
      & 1 & y & v \\
      & & 1 & z \\
     & & & 1
    \end{pmatrix}
\in N(\BQ_p)/ N(p^m \BZ_p).$$

So we can put $x=p^{-a}x'$, $y=p^{-b}y'$, $z=p^{-c}z'$, $u=p^{-d}u'$, $v=p^{-f}v'$ and $w=p^{-e}w'$, where $x',y',z',u',v',w' \in \BZ_p^{\times}$. Moreover, $a,b,c,d,e,f$ are all integers and they satisfy $a,b,c,d,e,f \geq -m$.

From the above matrix identities in \eqref{equ:identity}, assuming that the element $g_0 \in X(w_{G_4} \widetilde{c}) \subseteq K_m$, we can deduce that

\begin{itemize}

\item [(1)]
$-\frac{a_1}{xyz-xv-uz+w} \in 1+p^m\BZ_p$, i.e.
$$\mu:=p^{t}v_1^{-1}(p^{-a-f}x'v'+p^{-c-d}u'z'-p^{-a-b-c}x'y'z'-p^{-s}w') \in 1+p^m\BZ_p.$$

\item [(2)]
$\frac{a_2(xyz-xv-uz+w)}{uv-wy} \in 1+p^m \BZ_p$, i.e.
$$\lambda:=p^{r}(v_1v_2)^{-1}(p^{-b-s}w'y'-p^{-d-f}u'v') \in 1+p^m \BZ_p.$$

\item [(3)]
$a_4 w \in 1+p^m \BZ_p$, i.e. $e=s$; $w=p^{-s}w'$ and $v_4 w' \in 1+p^m \BZ_p$.

\item [(4)]
$a_4 x, a_4 u \in p^m \BZ_p$, i.e. $\vert a_4 x \rvert \leq p^{-m}$ and $\vert a_4 u \rvert \leq p^{-m}$. This means that $a+m \leq s$ and $d+m \leq s$.

\item [(5)]
$-\frac{a_3 v}{w} \in p^m \BZ_p$, i.e.$r-f \geq m$, so $f+m \leq r$.

\item [(6)]
$\frac{a_3 (w-xv)}{w} \in p^m \BZ_p$, i.e. $$\widetilde{m}:=p^{r}(p^{-a-f}x'v'-p^{-s}w') \in p^m\BZ_p.$$ Therefore, we have $a+f \leq \max{(r,s)}$.

\item [(7)]
$\frac{a_2 (yz-v)}{uv-wy} \in p^m \BZ_p$, i.e. $$\widetilde{n}:=p^{t}(p^{-b-c}y'z'-p^{-f}v') \in p^m \BZ_p.$$ Hence we have $b+c \leq \max{(t,f)}$.

\end{itemize}

Applying Lemma \ref{lemma: stevens2} (See also Lemma 5.2 in \cite{Ste87}), we have more properties on the relevant data:


\begin{itemize}
\item [(8)]
$a_3 a_4 y \in p^m \BZ_p$, i.e. $k:=p^rv_3v_4p^{-b}y' \in p^m \BZ_p$. Therefore, we have $b+m \leq r$.

\item  [(9)]
$a_3 a_4 (xy-u) \in p^m \BZ_p$, i.e.
$$p^rv_3v_4(p^{-a-b}x'y'-p^{-d}u') \in p^m \BZ_p,\;\; \widetilde{t}:=p^{r-a-b}x'y'-p^{r-d}u' \in p^m\BZ_p.$$ Therefore, we have $a+b \leq \max{(r,d)}$.

\item  [(10)]
$a_2 a_3 a_4 z \in p^m \BZ_p$, i.e. $p^{t}v_2v_3v_4 p^{-c}z'\in p^m \BZ_p$. Hence we have $c+m \leq t$.

\end{itemize}

\begin{rmk}
The above properties (8)---(10) on $a,b,c,d,f,x',y',z',u',v',w'$ can also be deduced from the following:

By the matrix identities in \eqref{equ:identity}, we have $g_0=uw_{G_4} \tilde{c} u'$. We set 
$g_0^{\iota}:=w_{G_4} \cdot (g^t)^{-1} \cdot w_{G_4}$. Since $g_0 \in X(w_{G_4} \tilde{c})$, we have $g_0^{\iota} \in X((w_{G_4} \tilde{c})^{\iota}) = X( w_{G_4} (w_{G_4} \tilde{c}^{-1} w_{G_4}))  \subseteq K_m$. We can deduce above properties (8)---(10) from the definition of $K_m$. Actually, all the properties (1)---(10) can be achieved from Lemma \ref{lemma: stevens2} (See also Lemma 5.2 in \cite{Ste87}).
\end{rmk}

\begin{rmk}
Since $g_0:= n_1 \cdot w_{G_4} \widetilde{c} \cdot n_2 \in X(w_{G_4} \widetilde{c}) \subseteq K_m$ for some $n_1 \in N(p^m \BZ_p) \bs N(\BQ_p)$ and $n_2 \in N(\BQ_p)/ N(p^m \BZ_p)$, applying the above matrix identity \ref{equ:identity}, we can write $n_1= \begin{pmatrix}
    1 & u_1 & u_4 & u_6 \\
      & 1 & u_2 & u_5 \\
      & & 1 & u_3 \\
     & & & 1
    \end{pmatrix},$
and $n_2=  \begin{pmatrix}
    1 & x & u & w \\
      & 1 & y & v \\
      & & 1 & z \\
     & & & 1
    \end{pmatrix} = \begin{pmatrix}
                                              1 & p^{-a}x' &  p^{-d}u' &  p^{-e}w' \\
                                               & 1 & p^{-b}y' &  p^{-f}v' \\
                                               &  & 1 & p^{-c}z' \\
                                               &  &  & 1
                                            \end{pmatrix} \pmod{N(p^m \mathbb{Z}_p)}.$
We note that $n_1^{-1} \cdot g_0= w_{G_4} \widetilde{c} \cdot n_2$. By direct computation, we have
$ w_{G_4} \widetilde{c} \cdot n_2= \begin{pmatrix} &&& p^{-t}v_1 \\ && p^{t-r}v_2 & p^{t-r-c}v_2 z' \\ & p^{r-s} v_3 & p^{r-s-b} v_3 y' & p^{r-s-f} v_3 v' \\ p^s v_4 & p^{s-a} v_4 x' & p^{s-d} v_4 u' & p^{s-e} v_4 w' \end{pmatrix}.$

This is a $4 \times 4$-matrix. Let $I, J \subseteq \{1,2,3,4 \}$ be two $k$-element subsets for $1 \leq k \leq 4$. We let $g_{I,J}$ be a $k \times k$ submatrix in terms of the matrix $w_{G_4} \widetilde{c} \cdot n_2$ by picking the $k \times k$ rows and columns with the index subset $I$ and $J$. We fix $I=\{5-k,6-k,\cdots,4\}$. By Lemma \ref{lemma: stevens2}, since $g_0 \in X(w_{G_4} \widetilde{c}) \subseteq K_m$, we have $\det(g_{I,J}) \in p^m \BZ_p$ if $J \neq \{5-k,6-k,\cdots,4\}$ for every $1 \leq k \leq 4$. If $J=I= \{5-k,6-k,\cdots,4\}$, we have $\det(g_{I,J}) \in 1+p^m \BZ_p$ for every $1 \leq k \leq 4$. For example, for every $1 \leq k \leq 4$, if $\{1,\cdots, k-1\} \subseteq J$ (If $k=1$, then $\{1,\cdots, k-1\}= \varnothing $), then $s,r,t \geq m$, $-m \leq c \leq t$, $-m \leq b,f \leq r$ and $-m \leq a,d,e \leq s$. If $I=J=\{4\}$, then we have $p^{s-e} v_4 w' \in 1+p^m \BZ_p$, which gives that $s=e$, $w=p^{-s}w'$ and $v_4 w' \in 1+p^m \BZ_p$. Now, from Lemma \ref{lemma: stevens2}, all the properties (1)---(10) are given by the congruence conditions and relations $\det(g_{I,J}) \in p^m \BZ_p$ if $I \neq J$ and $\det(g_{I,J}) \in 1+p^m \BZ_p$ if $I=J$.
\end{rmk}

Conversely, if we are given integers $a,b,c,d,f$ with $a,b,c,d,f \geq -m$ and $x',y',z',u',v',w' \in \BZ_p^{\times}$ satisfying the above properties (1)---(10), there exists an elememt $x_{a,b,c,d,f}^{x',y',z',u',v',w'} \in X(w_{G_4} \tilde{c})$ (Lemma \ref{lemma: stevens2}) for which
\begin{equation}\label{eqn1: u'(x)}
  u'(x_{a,b,c,d,f}^{x',y',z',u',v',w'}) = \begin{pmatrix}
                                              1 & p^{-a}x' &  p^{-d}u' &  p^{-s}w' \\
                                               & 1 & p^{-b}y' &  p^{-f}v' \\
                                               &  & 1 & p^{-c}z' \\
                                               &  &  & 1
                                            \end{pmatrix} \pmod{N(p^m \mathbb{Z}_p)}.
\end{equation}
Moreover, we also note that
\begin{equation} \label{eqn1: u(x)}
u(x_{a,b,c,d,f}^{x',y',z',u',v',w'}) = \begin{pmatrix} 1 & u_1 & u_4 & u_6 \\ & 1 & u_2 & u_5 \\ && 1 & u_3 \\ &&& 1 \end{pmatrix} \in N(p^m \BZ_p) \bs N(\BQ_p).
\end{equation}

Applying the matrix identities in \eqref{equ:identity}. Since all the above properties (1)---(10) are satisfied, we see that the element $g_0 \in X(w_{G_4} \widetilde{c})$ from Lemma \ref{lemma: stevens2}. Hence we can pick $x_{a,b,c,d,f}^{x',y',z',u',v',w'}= g_0$.

Using above notations, we can rewrite $u_i,1 \leq i \leq 6$ as follows:
\begin{align*}
u_1&=\mu^{-1}p^{r-t}v_2^{-1}(p^{-a-b}x'y'-p^{-d}u');\\
u_2&=\lambda^{-1}p^{t-r}(v_1v_3)^{-1}(p^{s-c-d}u'z'-w');\\
u_3&= -v_3v'(v_4w')^{-1}p^{r-s-f};\\
 u_4&=-\mu^{-1}v_3^{-1}x'p^{s-r-a};\\
u_5&=\lambda^{-1}(v_1v_4)^{-1}p^{t-s}(p^{-f}v'-p^{-b-c}y'z');\\
u_6&= \mu^{-1} v_4^{-1} p^{-s}.
\end{align*}

We recall that $\psi_p$ is the nontrivial additive character of $N(\mathbb{Q}_p)$ which is trivial on $N(p^m \mathbb{Z}_p)$.
For certain $a,b,c,d,f$, and $x',y',z',u',v',w'$ satisfying the above Property (1)---(10), we let
$$
  X_{a,b,c,x',y',z'}^{d,f,u',v',w'}(w_{G_4} \widetilde{c})
              := T(1+p^m \mathbb{Z}_p)*x_{a,b,c,x',y',z'}^{d,f,u',v',w'}
$$
be the orbit through $x_{a,b,c,x',y',z'}^{d,f,u',v',w'}$, and let
$$
  S_{a,b,c,x',y',z'}^{d,f,u',v',w'}(\psi_p ; \widetilde{c},w_{G_4})
  := \sum_{x\in X_{a,b,c,x',y',z'}^{d,f,u',v',w'}(w_{G_4} \widetilde{c})}\psi_p(u(x))\psi_p(u'(x))
$$
be the Kloosterman sum restricted to the given orbit.
Now for certain fixed $a,b,c,x',y',z'$ satisfying the previous Property (1)---(10), we let
$$
  X_{a,b,c,x',y',z'}(w_{G_4} \widetilde{c}) := \bigcup\limits_{d,f,u',v',w'} X_{a,b,c,x',y',z'}^{d,f,u',v',w'}(w_{G_4} \tilde{c}),
$$
where $d,f$ run over all integers bigger than $-m$, and $u',v',w'$ run over all the elements of $\mathbb{Z}_p^\times$
satisfying Property (1)---(10). Let
$$
S_{a,b,c,x',y',z'}(\psi_p ; \widetilde{c},w_{G_4}) := \sum_{x\in X_{a,b,c,x',y',z'}(w_{G_4} \tilde{c})}\psi_p(u(x))\psi_p(u'(x)).
$$

\begin{lem}\label{lemma: X(w_{G_n} c)}
  We have $X(w_{G_4} \widetilde{c}) = \coprod_{a,b,c,x',y',z'} X_{a,b,c,x',y',z'}(w_{G_4} \widetilde{c})$, where $a,b,c$ run over all integers larger than $-m$ and smaller than $\ell$, and $x' \in \mathbb{Z}_p^{\times}/ (1+p^{m_a} \BZ_p)$, $y' \in \mathbb{Z}_p^{\times}/ (1+p^{m_b} \BZ_p)$, and $z' \in \mathbb{Z}_p^{\times}/ (1+p^{m_c} \BZ_p)$ 
satisfying Property (1)---(10). Here $m_a:=\min(m, m+a)$, $m_b:=\min(m, m+b)$, $m_c:=\min(m, m+c)$.
\end{lem}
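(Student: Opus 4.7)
The plan is to mimic the proof of Lemma \ref{lemma: X(cw_G)} almost verbatim, using the matrix identity \eqref{equ:identity} and Properties (1)--(10) in place of the general combinatorial data from Lemma \ref{lemma: stevens2}. There are three things to establish: (i) that every orbit $X_{a,b,c,x',y',z'}^{d,f,u',v',w'}(w_{G_4}\widetilde c)$ is contained in $X(w_{G_4}\widetilde c)$, (ii) that the union over all admissible parameters exhausts $X(w_{G_4}\widetilde c)$, and (iii) that the union is disjoint once $(a,b,c)$ are taken to be integers in $[-m,\ell]$ and $(x',y',z')$ are taken modulo $1+p^{m_a}\BZ_p$, $1+p^{m_b}\BZ_p$, $1+p^{m_c}\BZ_p$ respectively. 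Containment (i) is immediate: by Property (1)--(10), the matrix produced by the identity \eqref{equ:identity} lies in $K_m$, so it represents an element of $C(w_{G_4}\widetilde c)$, and its $T(1+p^m\BZ_p)$-orbit is by definition inside $X(w_{G_4}\widetilde c)$.

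For coverage (ii), I would take an arbitrary $g_0\in X(w_{G_4}\widetilde c)$ and, using the uniqueness of the Bruhat decomposition, write $g_0=u_1\,w_{G_4}\widetilde c\,u_2$ with $u_1\in N(p^m\BZ_p)\bs N(\BQ_p)$ and $u_2\in N(\BQ_p)/N(p^m\BZ_p)$. The three superdiagonal entries of $u_2$ then define integers $a,b,c\ge -m$ and units $x',y',z'\in\BZ_p^\times$, while the upper-triangular entries further from the diagonal define $d,f\ge -m$ and units $u',v',w'\in\BZ_p^\times$ (with $e=s$ forced by the $(4,4)$-subdeterminant congruence). Apply Lemma \ref{lemma: stevens2} to the principal bottom-right minors of $w_{G_4}\widetilde c\,u_2=u_1^{-1}g_0\in K_m$ of sizes $1,2,3,4$, and then to the analogous minors of the top-right $2\times 2$ and $3\times 3$ blocks; this precisely yields the ten congruences (1)--(10). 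Matching these data against the identity \eqref{equ:identity} shows $g_0=x_{a,b,c,d,f}^{x',y',z',u',v',w'}$, so $g_0\in X_{a,b,c,x',y',z'}(w_{G_4}\widetilde c)$, as required.

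For disjointness (iii), the crucial ingredient is again that the map $u'\colon X(w_{G_4}\widetilde c)\to N(\BQ_p)/N(p^m\BZ_p)$ is injective (uniqueness of Bruhat); hence $u(x)$ is determined by $u'(x)$, and the whole element $x$ is determined by $u'(x)$. Therefore distinct $u_2$'s give distinct $X$-elements. The superdiagonal entries of $u'(x)$ are $p^{-a}x'$, $p^{-b}y'$, $p^{-c}z'$, so the valuations recover $a,b,c$ uniquely in $[-m,\ell]$ (the upper bound $\ell$ follows from the diagonal bounds $a,d\le s$, $b,f\le r$, $c\le t$ extracted in Properties (4),(5),(8),(10)). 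Under the $T(1+p^m\BZ_p)$-action the $(i,i+1)$-superdiagonal entry is multiplied by a character valued in $1+p^m\BZ_p$ (see \eqref{eqn: kappa t*x}), which translates into $x'$ being multiplied by an element of $1+p^m\BZ_p$; combined with the quotient by $N(p^m\BZ_p)$ (which allows shifts by $p^m\BZ_p$ in the ambient entry $p^{-a}x'$), this gives exactly the quotient $\BZ_p^\times/(1+p^{m_a}\BZ_p)$ with $m_a=\min(m,m+a)$, and similarly for $y',z'$. Hence different classes $(a,b,c,x',y',z')$ modulo these relations produce disjoint orbits.

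The only step I expect to be mildly subtle is the third one, since one has to check carefully that the combined effect of the $T(1+p^m\BZ_p)$-action on orbits and the reduction $\mathrm{mod}\,N(p^m\BZ_p)$ on representatives produces precisely $1+p^{m_a}\BZ_p$ and not a finer or coarser subgroup. This is a short direct computation using \eqref{eqn: kappa t*x}: when $a\ge 0$ the action reaches all of $1+p^m\BZ_p$, matching $m_a=m$; when $-m\le a<0$ the coset structure is cut down by the reduction mod $p^m$ to $1+p^{m+a}\BZ_p$, matching $m_a=m+a$. Once this is verified, the statement follows immediately by combining (i)--(iii).
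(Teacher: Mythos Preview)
Your proposal is correct and follows essentially the same approach as the paper. The paper's proof is considerably briefer: it simply states that the union is ``clearly disjoint and contained in $X(w_{G_4}\widetilde c)$ by definition'', then argues coverage exactly as you do in (ii), via the injectivity of $u'$ and Lemma~\ref{lemma: stevens2}; your additional analysis in (iii) of why the parameters $x',y',z'$ are determined precisely modulo $1+p^{m_a}\BZ_p$, etc., is correct but is not spelled out in the paper.
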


\begin{proof}
The proof is the same as Lemma 5.2 and 5.7 in \cite{Ste87}. The union is clearly disjoint and is contained in $X(w_{G_4} \widetilde{c})$ by definition. 
Actually, from the uniqueness of the Bruhat decomposition, the map $u': X(\tau) \rightarrow N(\BQ_p)/ N(p^m \BZ_p)$ is injective. Hence the matrix $u(x)$ is uniquely determined by the matrix $u'(x)$. For every $g_0 \in X(w_{G_4} \widetilde{c})$, we write $g_0= u_1 w_{G_4} \widetilde{c} u_2$ for some $u_1 \in N(p^m \BZ_p) \bs N(\BQ_p)$ and $u_2 \in N(\BQ_p)/ N(p^m \BZ_p)$. Hnece, we can write $u'(g_0)=u_2 \pmod{N(p^m \mathbb{Z}_p)}$. We note that $u_1^{-1} \cdot g_0= w_{G_4} \widetilde{c} \cdot u_2= \tau \cdot u_2$. By Lemma \ref{lemma: stevens2} (See also \cite[Lemma 5.2]{Ste87}), it is known that $a,b,c,d,f$ and $x',y',z',u',v',w'$ satisfy the above Property (1)---(10) if we write $u_2= \begin{pmatrix}
                                              1 & p^{-a}x' &  p^{-d}u' &  p^{-s}w' \\
                                               & 1 & p^{-b}y' &  p^{-f}v' \\
                                               &  & 1 & p^{-c}z' \\
                                               &  &  & 1
                                            \end{pmatrix} \pmod{N(p^m \mathbb{Z}_p)}.$
Since
\begin{equation}
  u'(x_{a,b,c,d,f}^{x',y',z',u',v',w'}) = \begin{pmatrix}
                                              1 & p^{-a}x' &  p^{-d}u' &  p^{-s}w' \\
                                               & 1 & p^{-b}y' &  p^{-f}v' \\
                                               &  & 1 & p^{-c}z' \\
                                               &  &  & 1
                                            \end{pmatrix} \pmod{N(p^m \mathbb{Z}_p)},
\end{equation}
we know that $g_0= x_{a,b,c,d,f}^{x',y',z',u',v',w'}$ by the uniqueness of the Bruhat decomposition and $g_0$ $ \in X_{a,b,c,x',y',z'}(w_{G_4} \widetilde{c})$.
\end{proof}

\begin{rmk}
In Section 4, we know that $u'(t*x)=s \cdot u'(x) \cdot s^{-1}$ for $t \in T(1+p^m \BZ_p)$ and $s:=\tau^{-1} t \tau \in T(1+p^m \BZ_p)$. Since the map $u': X(\tau) \rightarrow N(\BQ_p)/ N(p^m \BZ_p)$ is injective, we see that the orbits in $X(\tau)$ correspond to $T(1+p^m \BZ_p)$-conjugacy classes in the coset $N(\BQ_p)/ N(p^m \BZ_p)$. Moreover, from the above injective map $u'$, the counting of the size of the Kloosterman set $X(\tau)=X(w_{G_4} \widetilde{c})$ transfers to the counting of corresponding elements in the coset $N(\BQ_p)/ N(p^m \BZ_p)$.
\end{rmk}

\begin{lem}\label{lemma:S<<w8}
  Let $\ell = \max(s,r,t) \geq m$, 
  and $a \leq s-m,\ b \leq r-m,\ c \leq t-m$ be integers which are larger than $-m$.
  Then we have the inequality
 \[
    \begin{split}
      |S_{a,b,c,x',y',z'}(\psi_p ; \tilde{c},w_{G_4})|
      & \leq
        8 \cdot p^{6m} \cdot (p^{2m} ,p^{\ell+m})^{1/2} (p^{2m} ,p^{\ell+m})^{1/2} (p^{2m} ,p^{\ell+m})^{1/2} \\
      &  \cdot (\ell+m+1)^3 \cdot
          p^{-\frac{a+b+c}{2}}  \cdot  \#(X_{a,b,c,x',y',z'}(w_{G_4} \widetilde{c})) \\
     & =
        8 \cdot p^{6m} \cdot p^m \cdot p^m \cdot p^m \\
      &  \cdot (\ell+m+1)^3 \cdot
          p^{-\frac{a+b+c}{2}}  \cdot  \#(X_{a,b,c,x',y',z'}(w_{G_4} \widetilde{c})) \\
 &  = 8 \cdot p^{9m} \cdot (\ell+m+1)^3 \cdot
          p^{-\frac{a+b+c}{2}}  \cdot  \#(X_{a,b,c,x',y',z'}(w_{G_4} \widetilde{c})).
       \end{split}
  \]
\end{lem}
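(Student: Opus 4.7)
The plan is to mirror the proof of Lemma \ref{lemma:S<<wn}, specialized to $n=4$, so that the three restricted $\GL(2)$ Kloosterman factors that arise from the longest Weyl element each carry a factor $p^{(\ell+m)/2}$ and a gcd factor that can be bounded sharply in terms of $a$, $b$, $c$ respectively. First, I would apply the involution $\iota:g\mapsto w_{G_4}(g^t)^{-1}w_{G_4}$ to reduce to the case where $t\geq s$ (or any convenient ordering), exactly as in the proof of Lemma \ref{lemma:S<<wn}. Then, using the congruence relations (1)--(10) listed above together with the uniqueness of the Bruhat decomposition, I would verify that the matrix entries of $u(x)$ and $u'(x)$ lie in $p^{-\ell}\mathbb{Z}_p/p^m\mathbb{Z}_p$ for every $x\in X(w_{G_4}\widetilde{c})$, so that Definition \ref{defn: stevens} and Lemma \ref{lemma: stevens} apply with the chosen $\ell$.

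Next, I would write $X_{a,b,c,x',y',z'}(w_{G_4}\widetilde{c})$ as the disjoint union of orbits $X^{d,f,u',v',w'}_{a,b,c,x',y',z'}(w_{G_4}\widetilde{c})$ indexed by a finite set $\mathcal{S}\subset\mathbb{Z}_{\geq-m}^{2}\times(\mathbb{Z}_p^\times)^{3}$, and apply Lemma \ref{lemma: stevens} orbit-by-orbit to obtain
\[
S_{a,b,c,x',y',z'}(\psi_p;\widetilde{c},w_{G_4})
< p^{-3\ell}(1-p^{-1})^{-3}\sum_{(d,f,u',v',w')\in\mathcal{S}}\#(X^{d,f,u',v',w'}_{a,b,c,x',y',z'}(w_{G_4}\widetilde{c}))\,S_{w_{G_4}}(\theta^{d,f,u',v',w'}_{a,b,c,x',y',z'};\ell),
\]
where the character is built from the entries $u_1,u_2,u_3$ of $u(x)$ and the entries $p^{-a}x',p^{-b}y',p^{-c}z'$ of $u'(x)$ against the simple roots.

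The heart of the argument is then to decompose $S_{w_{G_4}}(\theta;\ell)$, via Remark \ref{rmk:kloosterman}, as a product of three restricted $\GL(2)$ Kloosterman sums
\[
S_2(u_1 p^\ell,\,x' p^{\ell-a};p^\ell)\cdot S_2(u_2 p^\ell,\,y' p^{\ell-b};p^\ell)\cdot S_2(u_3 p^\ell,\,z' p^{\ell-c};p^\ell),
\]
and apply the refined Weil bound \eqref{eqn: S_2} from Remark \ref{rmk:weil} to each factor. Using $\gcd(|u_i p^{\ell+m}|_p^{-1},|p^{\ell+m-a}|_p^{-1},p^{\ell+m})\leq \gcd(p^{2m},p^{\ell+m})\,p^{\ell-a}$ (and similarly for $b$, $c$), the three Weil factors multiply to
\[
(\ell+m+1)^3 p^{3m}\bigl((p^{2m},p^{\ell+m})^{1/2}\bigr)^3 p^{3\ell-(a+b+c)/2},
\]
which, combined with the prefactor $p^{-3\ell}(1-p^{-1})^{-3}\leq 2^3 p^{-3\ell}$ and the identity $\sum_{\mathcal{S}}\#(X^{d,f,u',v',w'}_{a,b,c,x',y',z'})=\#(X_{a,b,c,x',y',z'}(w_{G_4}\widetilde{c}))$, delivers the asserted estimate.

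The main obstacle, just as in the $\GL(n)$ case, is bookkeeping the gcd factors so that each of the three $\GL(2)$ sums genuinely contributes $p^{-a/2}$, $p^{-b/2}$, $p^{-c/2}$ respectively; this requires that the simple-root entries of $u'(x)$ really are $p^{-a}x'$, $p^{-b}y'$, $p^{-c}z'$ (forced by our choice of orbit representative) and that the $u_i$'s appearing in $u(x)$ satisfy the integrality estimates coming from Property (1)--(10). Once the gcd accounting is in place, the remaining bounds are uniform in $(d,f,u',v',w')\in\mathcal{S}$ and collapse cleanly into the stated inequality.
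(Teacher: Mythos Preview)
Your proposal is correct and follows essentially the same route as the paper's proof: reduce via $\iota$, verify that all entries of $u(x),u'(x)$ lie in $p^{-\ell}\mathbb{Z}_p/p^m\mathbb{Z}_p$, decompose over $\mathcal{S}$ using Lemma~\ref{lemma: stevens}, factor $S_{w_{G_4}}$ into three restricted $\GL(2)$ sums, and apply the refined Weil bound with the gcd estimate $\gcd(\,\cdot\,,|p^{\ell+m-a}|_p^{-1},p^{\ell+m})\leq (p^{2m},p^{\ell+m})\,p^{\ell-a}$. One small slip: by Remark~\ref{rmk:kloosterman} the pairing is $\kappa_i$ with $\kappa_{n-i}'$, so for $n=4$ the three factors are $S_2(u_1 p^\ell, z'p^{\ell-c};p^\ell)$, $S_2(u_2 p^\ell, y'p^{\ell-b};p^\ell)$, $S_2(u_3 p^\ell, x'p^{\ell-a};p^\ell)$ rather than the diagonal pairing you wrote; this is exactly what the paper records, and since your gcd bound uses only the $\kappa'$ argument the final estimate is unaffected.
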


\begin{proof}
  The order two involution map $\iota: g \rightarrow g^{\iota}:= w_{G_4} \cdot (g^t)^{-1} \cdot w_{G_4}$ sends $X_{a,b,c,x',y',z'}(w_{G_4} \tilde{c})$ to $X_{c,b,a,z',y',x'}((w_{G_4} \tilde{c})^\iota)$.
  Composing $\psi_p$, $\psi_p'$ with $\iota$ has the effect of replacing $\psi_p$ by $\overline{\psi_p}$ and $\psi_p'$ by $\overline{\psi_p'}$.
  For $g_0 = u_1 w_{G_4} \tilde{c} u_2 \in X(w_{G_4} \tilde{c})$, we have $g_0^{\iota} \in X((w_{G_4} \tilde{c})^{\iota}) = X( w_{G_4} (w_{G_4} \tilde{c}^{-1} w_{G_4}))  \subseteq K_m$ by definition. 
  Hence, applying $\iota$ to the element $w_{G_4} \tilde{c}$ reverses the roles of $t$ and $s$.
  Therefore we may assume that $t\geq s$ without loss of generality.

  Note that $\ell= \max(r,t)$. Property (1)---(10) imply that the matrix entries of $u(x)$ and $u'(x)$
  lie in $p^{-\ell}\mathbb{Z}_p/ p^m \mathbb{Z}_p$ for every $x\in X(w_{G_4} \widetilde{c})$.
  Indeed, by Lemma \ref{lemma: X(w_{G_n} c)}, it is enough to verify this for $x=x_{a,b,c,x',y',z'}^{d,f,u',v',w'}(w_{G_4} \widetilde{c})$. Appiying the Property (1)---(10). Since
\begin{equation*}
\begin{aligned}
  \mu &=p^{-s}  p^tv_1^{-1}( p^{s-d-c}u'z' - w') + p^{-a}  p^{t}v_1^{-1}( p^{-f}v' - p^{-b-c}y'z') \\
  &=p^{-s}\lambda v_3 u_2 p^{r}+p^{-a}v_1^{-1} \tilde{n} \in 1+p^m \mathbb{Z}_p,
\end{aligned}
\end{equation*}
and
$$ u_1=\mu^{-1}p^{-t}v_2^{-1}\widetilde{t}, \;\; u_5=-\lambda^{-1}(v_1v_4)^{-1}p^{-s} \widetilde{n}$$
for $\widetilde{n}, \widetilde{t} \in p^m \BZ_p$,
we have
$$u_1 \in p^{-t+m} \BZ_p \subseteq p^{-\ell+m} \BZ_p,\;\; u_2\in p^{-r+m}\mathbb{Z}_p \subseteq p^{-\ell+m} \BZ_p$$
and
$$u_5 \in p^{-s+m} \BZ_p \subseteq p^{-\ell+m} \BZ_p.$$
Moreover, $u_3 \in p^{-s+m} \BZ_p \subseteq p^{-\ell+m} \BZ_p$, $u_4 \in p^{-r+m} \BZ_p \subseteq p^{-\ell+m} \BZ_p$ and $u_6 \in p^{-\ell} \BZ_p^{\times}$ are directly from the properties. The claim is now easily verified.

  Now let $\mathcal{S}$ be a finite subset of $\mathbb{Z}_{\geq -m}^2\times (\mathbb{Z}_p^\times)^3$
  such that $X_{a,b,c,x',y',z'}(w_{G_4} c)$ is the disjoint union of the
  $X_{a,b,c,x',y',z'}^{d,f,u',v',w'}(w_{G_4} c)$ with $(d,f,u',v',w')\in\mathcal{S}$.
  Then as in Lemma \ref{lemma: stevens} \cite[Theorem 4.10]{Ste87}, 
  we have
  \begin{equation}\label{eqn: S decomp}
\begin{aligned}
    S_{a,b,c,x',y',z'}(\psi_p ;\tilde{c},w_{G_4})
    & <  p^{-3\ell}(1-p^{-1})^{-3} \times \\
&  \sum_{(d,f,u',v',w')\in\mathcal{S}}
    \#(X_{a,b,c,x',y',z'}^{d,f,u',v',w'}(w_{G_4} \tilde{c}))
    S_{w_{G_4}}(\theta_{a,b,c,x',y',z'}^{d,f,u',v',w'};\ell),
\end{aligned}
  \end{equation}
  where $S_{w_{G_4}}$ is defined in Definition \ref{defn: stevens} \cite[Definition 4.9]{Ste87} 
  and $\theta_{a,b,c,x',y',z'}^{d,f,u',v',w'}: A_{w_{G_4}}(\ell)\rightarrow\mathbb{C}^\times$
  is also the character defined in Definition \ref{defn: stevens} \cite[Definition 4.9]{Ste87} by
 \[
    \begin{split}
      \theta_{a,b,c,x',y',z'}^{d,f,u',v',w'}(\underline{\lambda}\times\underline{\lambda}')
      : &  =  \xi \left( u_1\lambda_1+ u_2\lambda_2+ u_3\lambda_3+ p^{-a}x'\lambda_1'+ p^{-b}y'\lambda_2'+ p^{-c}z'\lambda_3'\right) \\
      & = \xi \left(\frac{( \mu^{-1}v_2^{-1}p^{\ell+r-t}(p^{-a-b}x'y'-p^{-d}u'))\lambda_1}{p^{\ell}} \right.\\
      & \hskip 50pt \left. +\frac{ \lambda^{-1}(v_1v_3)^{-1}p^{\ell+t-r} (p^{s-c-d}u'z'-w'))\lambda_2}{p^{\ell}} \right.\\
      & \hskip 50pt \left. +\frac{( v_3v'(v_4w')^{-1} p^{\ell+r-s-f})\lambda_3}{p^{\ell}} \right.\\
      & \hskip 50pt \left. +\frac{ p^{\ell-a}x'\lambda_1' + p^{\ell-b}y'\lambda_2' + p^{\ell-c}z'\lambda_3'}{p^{\ell}}\right).
    \end{split}
  \]

  By Remark \ref{rmk:kloosterman} and Example 4.12 in \cite{Ste87}, we have
  \begin{equation}\label{eqn: S decomp to S_2}
    \begin{split}
       S_{w_{G_4}}(\theta_{a,b,c,x',y',z'}^{d,f,u',v',w'};\ell) & = S_2( \mu^{-1}v_2^{-1}p^{\ell+r-t}(p^{-a-b}x'y'-p^{-d}u'), z'p^{\ell-c};p^{\ell}) \\
      & \hskip 30pt  \cdot S_2( \lambda^{-1}(v_1v_3)^{-1}p^{\ell+t-r} (p^{s-c-d}u'z'-w'), y'p^{\ell-b};p^{\ell})\\
      & \hskip 30 pt \cdot S_2( v_3 v'(v_4w')^{-1} p^{\ell+r-s-f}, x'p^{\ell-a};p^{\ell}),
    \end{split}
  \end{equation}
  where $S_2$ is the restricted $\GL(2)$-Kloosterman sum defined in Remark \ref{rmk:kloosterman}.

  By the refined Weil's bound in Remark \ref{rmk:weil}, we have the inequality
  \begin{equation}\label{eqn: S_2}
    |S_2(\nu,\nu';p^{\ell})| \leq (\ell+m+1) \cdot  B_m \cdot (\gcd(| p^m \nu |_p^{-1},| p^m \nu'|_p^{-1},p^{\ell+m}))^{1/2} p^{(\ell+m)/2},
  \end{equation}
  for $\nu, \nu' \in p^{-m} \mathbb{Z}_p- \{0\}$ (Here we may let $B_m=p^{m/2}$. See Section 9 in \cite{KL13}).

 In order to apply the refined Weil's bound, we note that
  \[
    \begin{split}
       \gcd(| p^{\ell+m+r-s-f}|_p^{-1},| p^{\ell+m-a}|_p^{-1},p^{\ell+m}) & \leq \gcd( p^{2m} ,p^{\ell+m})p^{\ell-a}, \\
       \gcd(|  p^{\ell+m+t-r} (p^{s-c-d}u'z'-w')|_p^{-1},| p^{\ell+m-b}|_p^{-1},p^{\ell+m}) & \leq \gcd( p^{2m} ,p^{\ell+m})p^{\ell-b}, \\
       \gcd(|  p^{\ell+m+r-t}(p^{-a-b}x'y'-p^{-d}u')|_p^{-1},| p^{\ell+m-c}|_p^{-1},p^{\ell+m}) & \leq \gcd( p^{2m} ,p^{\ell+m})p^{\ell-c},
    \end{split}
  \]
since $\gcd(a,b) \leq \min(a,b)$.   Hence we have
  \begin{equation*}\label{eqn: S_w_8 bound}
  \begin{aligned}
    |S_{w_{G_4}}(\theta_{a,b,c,x',y',z'}^{d,f,u',v',w'};\ell)|
   & \leq (\ell+m+1)^3 \times  ( p^{2m} ,p^{\ell+m})^{1/2} ( p^{2m} ,p^{\ell+m})^{1/2} \\
         & \cdot  ( p^{2m} ,p^{\ell+m})^{1/2} \cdot  p^{3\ell+6m-\frac{a+b+c}{2}} \\
   & \leq (\ell+m+1)^3 \times p^{3\ell+9m-\frac{a+b+c}{2}}.
  \end{aligned}
  \end{equation*}
  This inequality, together with \eqref{eqn: S decomp}, gives
  \begin{equation}\label{eqn: S_a,b,c}
    \begin{split}
      |S_{a,b,c,x',y',z'}(\psi_p ; \tilde{c},w_{G_4})|& \leq  (p^{2m} ,p^{\ell+m})^{1/2}
      ( p^{2m} ,p^{\ell+m})^{1/2} ( p^{2m} ,p^{\ell+m})^{1/2} \\
      & \cdot (\ell+m+1)^3 \cdot (1-p^{-1})^{-3} \cdot p^{6m-\frac{a+b+c}{2}}  \sum_{(d,f,u',v',w')\in\mathcal{S}}  \#(X_{a,b,c,x',y',z'}^{d,f,u',v',w'}(w_{G_4} \widetilde{c})) \\
       & \leq (\ell+m+1)^3 \cdot (1-p^{-1})^{-3} \cdot p^{9m-\frac{a+b+c}{2}}  \sum_{(d,f,u',v',w')\in\mathcal{S}}  \#(X_{a,b,c,x',y',z'}^{d,f,u',v',w'}(w_{G_4} \widetilde{c})).
    \end{split}
  \end{equation}
  The sum appearing on the right hand side is equal to $\#(X_{a,b,c,x',y',z'}(w_{G_4} \tilde{c}))$.
  Since $p\geq 2$, we have $(1-p^{-1})^{-3}\leq 8$, by \eqref{eqn: S_a,b,c}. This completes the proof of the lemma.
\end{proof}

\begin{proof}[Proof of Theorem \ref{thm: w_8}]

  By the involution map $\iota$, we can assume that $t\geq s$ without loss of generality.
  Let
 \begin{equation}
 \begin{aligned}
    C &:=8 p^{6m} \cdot ( p^{2m} ,p^{\ell+m})^{1/2} \cdot ( p^{2m} ,p^{\ell+m})^{1/2} \cdot ( p^{2m} ,p^{\ell+m})^{1/2} \\
   & \times (\ell+m+1)^3(r+m+1)(s+m+1) \\
&= 8 p^{9m} \times (\ell+m+1)^3(r+m+1)(s+m+1).
  \end{aligned}
  \end{equation}

  At first, we deal with the case $t\geq r$.
  \begin{itemize}
  \item If $a+b+c\leq t$ and $d+f\leq r$, then we have
  \begin{align*}
  \#(d,f)&\leq (s+m+1)(r+m+1), \\
  \#(u',v',w')&\leq p^{d+s+f+3m}.
  \end{align*}
  So we have
  \begin{equation*}
\begin{aligned}
    \#(X_{a,b,c,x',y',z'}(w_{G_4} \widetilde{c})) & \leq (r+m+1)(s+m+1) \cdot p^{a+b+c+3m+d+f+s+3m}\\
& \leq (r+m+1)(s+m+1) \cdot p^{r+s+a+b+c+6m}.
\end{aligned}
  \end{equation*}
  Hence by Lemma \ref{lemma:S<<w8}, we have
  $$
     |S_{a,b,c,x',y',z'}(\psi_p ;\widetilde{c},w_{G_4})|   \leq  C  p^{r+s+t/2+3m}.
  $$
Applying the above Lemma \ref{lemma: X(w_{G_n} c)}, we have
$$ \vert Kl_p(\psi_p ;\tilde{c},w_{G_4}) \rvert \leq C(r+m+1)(s+m+1)(t+m+1)p^{r+s+t/2+3m}= C_8p^{r+s+t/2+3m}.$$
  \item If $a+b+c\leq t$ and $d+f>r$, then we assume that $d+f=r+k$, where $k\geq1$.
      Note that $d+m \leq s, f+m \leq r$, which implies that $k\leq s-2m$.
      By Property (1)---(10), we have $b+s=d+f=r+k$.
      Since $\lambda \in 1+p^m \mathbb{Z}_p$, we have $\#\{(u',v',w')\}\leq p^{d+f+(s-k)+3m}=p^{r+s+3m}$.
      Hence
      \[
        |S_{a,b,c,x',y',z'}(\psi_p ;\tilde{c},w_{G_4})|
        \leq  C  p^{r+s+\frac{a+b+c}{2}+3m}
        \leq C   p^{r+s+t/2+3m}.
      \]
  \item If $a+b+c>t$ and $d+f\leq r$, then by Property (1)---(10) and a similar argument as above, we can assume that $a+b+c=t+h$, where $h \geq 1$. We also see that $h \leq d$ or $h \leq f$. Now since $\mu \in 1+p^m \BZ_p$, we have $\#\{(u',v',w')\}\leq p^{(d-h)+f+s+3m}$.
  Hence we get that
  \[
    \begin{split}
      |S_{a,b,c,x',y',z'}(\psi_p ;\tilde{c},w_{G_4})|  & \leq  C  p^{d-h+f+s+\frac{a+b+c}{2}+3m} \\
     & =C p^{d+f+s+(t-h)/2+3m}
      \leq C   p^{r+s+t/2+3m}.
    \end{split}
  \]
  \item If $a+b+c>t$ and $d+f>r$, then using the similar argument as above we have $\#\{(u',v',w')\}\leq p^{(d-h)+f+(s-k)+3m}$.
      Hence
      \[
        \begin{split}
       |S_{a,b,c,x',y',z'}(\psi_p ;\tilde{c},w_{G_4})| & \leq C  p^{d-h+f+s-k+\frac{a+b+c}{2}+3m} \\
    & =C p^{d+f+s+(t-h)/2+3m}
        \leq C  p^{r+s+t/2+3m}.
        \end{split}
      \]
  \end{itemize}
  Note that in this case, we always have $r+s+t/2 \leq t+3s/2+r/2$.  Theorem \ref{thm: w_8} now follows from the equality
  $$Kl_p(\psi_p ;\tilde{c},w_{G_4})=\sum\limits_{a,b,c,x',y',z'}S_{a,b,c,x',y',z'}(\psi_p ;\widetilde{c},w_{G_4}).$$

  Now we handle the case $r>t$.
  By a similar argument as above, we obtain
  \[
    |S_{a,b,c,x',y',z'}(\psi_p ;\tilde{c},w_{G_4})| \leq C  p^{r+s+t/2+3m}.
  \]
  Note that if $t$ is small, this bound is not good enough to get a nontrivial upper bound for Kloosterman sums. So we have to bound this in another way.
  \begin{itemize}
   \item Assume that $f>t$, then by previous Property (1)---(10), we have $b+c=f$, and $a+f\leq r$. By Property (1)---(10),
       we have $\#(u',v')\leq p^{d+f-(a+f-t)+2m}$.
       If $d+f\leq r$, we see that

        \begin{equation*}
\begin{aligned}
         |S_{a,b,c,x',y',z'}(\psi_p ;\tilde{c},w_{G_4})|
         &\leq C p^{d+f-(a+f-t)+s+\frac{a+b+c}{2}+3m}
          \leq C p^{t+s+d+\frac{b+c}{2}+3m} \\
          &\leq C p^{t+s+\frac{d}{2}+\frac{d+f}{2}+3m}
         \leq C p^{t+3s/2+r/2+3m}.
\end{aligned}
        \end{equation*}

    \item Assume that $f>t$ and $d+f>r$, by writing $d+f=r+k$, $1\leq k\leq s+2m$, we have

            \begin{equation*}
\begin{aligned}
            |S_{a,b,c,x',y',z'}(\psi_p ;\tilde{c},w_{G_4})|  & \leq C  p^{d+f-(a+f-t)+s-k+\frac{a+b+c}{2}+3m} \\
& \leq C  p^{t+s+\frac{d}{2}+\frac{d+f}{2}-k+3m}
         \leq C  p^{t+3s/2+r/2+3m}.
\end{aligned}
            \end{equation*}
         since $\mu,\lambda \in 1+p^m \BZ_p$.

  \item  Assume that $f\leq t$ and $a+b+c>r>t$. Since $\mu\in 1+p^m \mathbb{Z}_p$, we have $\#(u',v')\leq p^{d+f-(a+b+c-t)+2m}$.   Hence by the same argument on the size of $d+f$, we have
     \[
         |S_{a,b,c,x',y',z'}(\psi_p ;\tilde{c},w_{G_4})|
         \leq C   p^{r+t+s-\frac{a+b+c}{2}+3m}
         \leq C   p^{t+s+r/2+3m}.
     \]
  \item Assume that $f\leq t$, and $a+b+c\leq r$. If $a+b+c \leq t$, then we have
    \[
    \begin{split}
      |S_{a,b,c,x',y',z'}(\psi_p;\tilde{c},w_{G_4})| & \leq C  p^{d+f+s+\frac{a+b+c}{2}+3m}  \leq C  p^{t+2s+t/2+3m}.
    \end{split}
    \]
If $t<a+b+c \leq r$, we write $a+b+c=t+h$, where $1 \leq h \leq r-t$. Since $\mu\in 1+p^m \mathbb{Z}_p$ and $h \leq d$ or $h \leq f$, we have $\#(u',v',w')\leq p^{(d+f-h)+s+3m}$. Therefore we sill have
\begin{equation*}
\begin{aligned}
 |S_{a,b,c,x',y',z'}(\psi_p ;\tilde{c},w_{G_4})|  &\leq C  p^{d+f-h+s+\frac{a+b+c}{2}+3m}  \leq C p^{d+f+s-\frac{a+b+c}{2}+t+3m} \\ &\leq C p^{d+f+s+t/2+3m} \leq C  p^{t+2s+t/2+3m}.
\end{aligned}
\end{equation*}
So in this case, we have
$$ |S_{a,b,c,x',y',z'}(\psi_p ;\tilde{c},w_{G_4})|  \leq C  p^{d+f+s+\frac{a+b+c}{2}+3m}  \leq C  p^{t+2s+t/2+3m}.$$
Note that $t+2s+t/2 \leq t+3s/2+r/2$ if and only if $s+t \leq r$, $t+2s+t/2 \leq r+s+t/2$ if and only if $s+t \leq r$, and $t+3s/2+r/2 \leq r+s+t/2$ if and only if $s+t \leq r$. Hence we have the following inequality:
$$ |S_{a,b,c,x',y',z'}(\psi_p ;\tilde{c},w_{G_4})| \leq  C \min(p^{r+s+t/2+3m},p^{t+3s/2+r/2+3m}).$$

  \end{itemize}
  This proves \eqref{eqn:Kl-w8}.

  We now give a proof of the second claim.
  If $r+\sigma+\varrho/2\leq \varrho+3\sigma/2+r/2$, i.e., $r\leq \varrho+\sigma$, then $\sigma+r\leq 3\varrho$, so $r+\sigma+\varrho/2 \leq 7(\varrho+r+\sigma)/8$. If $r+\sigma+\varrho/2 > \varrho+3\sigma/2+r/2$, i.e., $r>\varrho+\sigma$, then $4\sigma< 2r$ and $\varrho+5\sigma < 3r$, so we still have $\varrho+3\sigma/2+r/2 < 7(\varrho+r+\sigma)/8$. This proves that $\min(p^{r+\sigma+\varrho/2},p^{\varrho+3\sigma/2+r/2})\leq p^{7(t+r+s)/8}$, as claimed, and hence Theorem \ref{thm: w_8}.
\end{proof}

\begin{rmk}
Note that the trivial bound for the local Kloosterman sum in \cite{DR98} is the following:
$$ \vert Kl_p(\psi_p ;\tilde{c},w_{G_4}) \rvert = O_{\epsilon}(p^{(1+\epsilon)(t+r+s)}).$$
Since $\frac{7}{8}<1$, we get a nontrivial bound for the local Kloosterman sum (integral) by applying Stevens' method.
\end{rmk}

\begin{rmk}
The bound in the appendix is better than the bound in Section 5. The bound in Section 5 is $1-\frac{1}{4 \times 4^2-18 \times 4+22}=1-\frac{1}{14}=\frac{13}{14}$, which is larger than $1-\frac{1}{8}=\frac{7}{8}$. Applying a similar method (Stevens' approach \cite{Ste87}), such kind of non-trivial bound for $\GL(4)$ Kloosterman sums attached to the longest Weyl element $w_{G_4}$ is also achieved by Bingrong Huang with the exponent $\frac{9}{10}<1$ in the appendix of \cite{GSW21}. After a more careful estimation for the case $r>t$, we can slightly improve the bound in \cite{GSW21} by moving the exponent from $1-\frac{1}{10}=\frac{9}{10}$ to $1-\frac{1}{8}=\frac{7}{8}$.
\end{rmk}

\begin{rmk}
  The result is not optimal.
  To improve the bound in some cases, one may use the stationary phase formulas as Dabrowski and Fisher did for $\GL(3)$ (See \cite{DF97}).
\end{rmk}

\begin{rmk}
In Theorem \ref{thm: w_8} and Section 4, we only consider the special non-degenerated additive character $\psi(\sum_{i=1}^{n-1} u_{i,i+1})$, since all non-degenerated additive characters are in the same orbit under the action of diagonal matrices $T$. If we consider the general additive characters, we write the non-degenerated additive characters $\psi_p$ and $\psi_p'$ of $N(\mathbb{Q}_p)$ which are trivial on $N(p^m \mathbb{Z}_p)$ as follows:
\begin{equation} 
       \psi_p\left(\begin{pmatrix} 1&u_1&*&*\\ &1&u_2&*\\ &&1&u_3\\ &&&1\end{pmatrix}\right) = \xi( \nu_1 u_1+ \nu_2 u_2+ \nu_3 u_3),
\end{equation}
and
\begin{equation} 
      \psi_p' \left(\begin{pmatrix} 1&u_1&*&*\\ &1&u_2&*\\ &&1&u_3\\ &&&1\end{pmatrix}\right) = \xi( \nu_1' u_1+ \nu_2' u_2+ \nu_3' u_3),
\end{equation}
where $\nu_1,\nu_2,\nu_3$, $\nu_1', \nu_2',\mu_3'$ $ \in p^{-m}\mathbb{Z}_p- \{0 \}$. Moreover, we further assume that $p^{-m} \leq \vert \nu_i \rvert \leq p^{m}$ and $p^{-m} \leq \vert \nu_i' \rvert \leq p^{m}$ for all $1 \leq i, i'  \leq 3$. Here $m$ is same as the $m$ that we defined in previous Section 3 and 4. We have the following non-trivial upper bound for $\GL(4)$ generalized Kloosterman sums on non-degenerated additive characters: Let $\ell=
  \max(r,s,t) \geq m$, $\varrho=\max(t,s)$, $\sigma=\min(t,s)$, and
  \begin{equation}
\begin{aligned}
 D_8 &= 8  p^{9m} (|\nu_1\nu_3'p^{2m}|_p^{-1},p^{\ell+m})^{1/2} (|\nu_2\nu_2'p^{2m}|_p^{-1},p^{\ell+m})^{1/2} (|\nu_3\nu_1' p^{2m}|_p^{-1},p^{\ell+m})^{1/2} \\
& \cdot (\ell+m+1)^3 (\varrho+m+1)(r+m+1)^2(\sigma+m+1)^2.
\end{aligned}
  \end{equation}
Then
  \begin{equation}
    \begin{split}
      |Kl_p(\psi_p,\psi_p';\tilde{c},w_{G_4})|
      & \leq
        D_8 \cdot \min(p^{r+\sigma+\varrho/2+3m},p^{\varrho+3\sigma/2+r/2+3m}).
    \end{split}
  \end{equation}
  In particular, we have $|Kl_p(\psi_p,\psi_p';\tilde{c},w_{G_4})|\leq D_8 \cdot p^{7(t+r+s)/8+3m}.$
\end{rmk}

\end{document}